\numberwithin{equation}{section}
\newtheorem{theorem}[equation]{Theorem}
\newtheorem{proposition}[equation]{Proposition}
\newtheorem{lemma}[equation]{Lemma}
\newtheorem{corollary}[equation]{Corollary}
\newtheorem{problem}[equation]{Problem}
\newtheorem{question}[equation]{Question}
\theoremstyle{definition}
\newtheorem{rmk}[equation]{Remark}
\newenvironment{remark}[1][]{\begin{rmk}[#1] \pushQED{\qed}}{\popQED \end{rmk}}
\newtheorem{eg}[equation]{Example}
\newenvironment{example}[1][]{\begin{eg}[#1] \pushQED{\qed}}{\popQED \end{eg}}
\newtheorem{defnaux}[equation]{Definition}
\newenvironment{definition}[1][]{\begin{defnaux}[#1]\pushQED{\qed}}{\popQED \end{defnaux}}
\newcommand{\cB}{\mathcal{B}}
\newcommand{\rB}{\mathrm{B}}
\newcommand{\cC}{\mathcal{C}}
\newcommand{\sE}{\mathscr{E}}
\newcommand{\bF}{\mathbf{F}}
\newcommand{\cP}{\mathcal{P}}
\newcommand{\cR}{\mathcal{R}}
\newcommand{\bS}{\mathbf{S}}
\newcommand{\fS}{\mathfrak{S}}
\newcommand{\cT}{\mathcal{T}}
\newcommand{\bZ}{\mathbf{Z}}
\newcommand{\rf}{\mathrm{f}}
\newcommand{\arxiv}[1]{\href{http://arxiv.org/abs/#1}{{\tiny\tt arXiv:#1}}}
\newcommand{\DOI}[1]{\href{http://doi.org/#1}{\color{purple}{\tiny\tt DOI:#1}}}
\newcommand{\defn}[1]{\emph{#1}}
\let\ul\underline
\renewcommand{\phi}{\varphi}
\DeclareMathOperator{\im}{im} 
\DeclareMathOperator{\End}{End}
\DeclareMathOperator{\Alg}{Alg}
\DeclareMathOperator{\Et}{Et}
\DeclareMathOperator{\Mod}{Mod}
\DeclareMathOperator{\Fun}{Fun}
\DeclareMathOperator{\Hom}{Hom}
\DeclareMathOperator{\uHom}{\ul{Hom}}
\DeclareMathOperator{\Rep}{Rep}
\DeclareMathOperator{\Spec}{Spec}
\newcommand{\id}{\mathrm{id}}
\newcommand{\op}{\mathrm{op}}
\renewcommand{\Vec}{\mathrm{Vec}}
\newcommand{\GL}{\mathbf{GL}}
\DeclareMathOperator{\Mat}{Mat}
\DeclareMathOperator{\Sp}{Sp}
\newcommand{\bbone}{\mathds{1}}
\newcommand{\bzero}{\mathbf{0}}
\newcommand{\bone}{\mathbf{1}}
\newcommand{\uotimes}{\mathbin{\ul{\otimes}}}
\newcommand{\myuline}[1]{%
  \uline{\phantom{#1}}%
  \llap{\contour{white}{#1}}%
}
\DeclareMathOperator{\uRep}{\text{\myuline{\rm Rep}}}
\DeclareMathOperator{\uPerm}{\ul{Perm}}
\title{Discrete pre-Tannakian categories}
\author{Nate Harman}
\author{Andrew Snowden}
\date{April 11, 2023}
\begin{document}

\begin{abstract}
Pre-Tannakian categories are a natural class of tensor categories that can be viewed as generalizations of algebraic groups. We define a pre-Tannkian category to be \defn{discrete} if it is generated by an \'etale commutative algebra; these categories generalize finite groups. The main theorem of this paper establishes a rough classification of these categories: we show that any discrete pre-Tannakian category $\cC$ is associated to an oligomorphic group $G$, via a construction we recently introduced. In certain cases, such as when $\cC$ has enough projectives, we completely describe $\cC$ in terms of $G$.
\end{abstract}

\maketitle
\tableofcontents

\section{Introduction}

\subsection{Background}

The representation theory of algebraic groups plays an important role in many areas of mathematics. One way to view this theory is through the lens of tensor categories: the collection $\Rep(G)$ of finite dimensional representations of an algebraic group $G$ forms an abelian category that carries a natural tensor product. Taking the most important properties of these examples as axioms leads to the class of \defn{pre-Tannakian} tensor categories (see Definition~\ref{defn:pt}). We view these as fundamental objects that are intrinsically interesting. From this perspective, it is natural to consider the following problem:

\begin{problem} \label{prob:main}
Classify pre-Tannakian categories.
\end{problem}

In characteristic~0, Deligne \cite{Deligne2} characterized pre-Tannakian categories of the form $\Rep(G)$, with $G$ an algebraic (super)group. Subsequently, several authors have worked to extend this result to positive characteristic \cite{BE, BEO, Cou1, Cou2, CEO, CEOP, EO, EOV, Ostrik}. However, beyond this work, we are unaware of any systematic attack on Problem~\ref{prob:main}. This is not surprising, due to the dearth of examples: until recently, the only known pre-Tannakian categories in characteristic~0 were those attached to (super)groups, and those coming from Deligne interpolation (see \cite{Deligne3} and \cite{ComesOstrik1, ComesOstrik, EntovaAizenbudHeidersdorf, Harman1, Harman2, Knop, Knop2, Mori}).

In recent work \cite{repst}, we gave a general construction of pre-Tannakian categories. Recall that an \defn{oligomorphic group} is a permutation group $(G, \Omega)$ such that $G$ has finitely many orbits on $\Omega^n$ for all $n \ge 0$. Given such a group, and an additional piece of data $\mu$ called a measure, we construct a (non-abelian) tensor category $\uPerm(G; \mu)$. We also construct an abelian category $\uRep(G; \mu)$ that is well-behaved if the measure $\mu$ is ``normal,'' and sometimes pre-Tannakian. These constructions extend to a slightly larger class of groups, the \defn{admissible groups}, which are essentially inverse limits of oligomorphic groups. See \S \ref{s:olig} for a summary.

The infinite symmetric group is oligomorphic, and in this case our construction recovers Deligne's category $\uRep(\fS_t)$. A new example coming out of our theory is the Delannoy category: this is a pre-Tannakian category $\uRep(G; \mu)$ where $G$ is the group of orientation preserving self-homeomorphisms of the real line. The paper \cite{line} is a detailed study of the Delannoy category, and \cite{circle} treats a related example.

While oligomorphic groups carry an important topology, they are discrete in spirit. In \cite{homoten}, we defined and constructed algebraic analogs of oligomorphic groups, and we expect that there is an analogous story to \cite{repst} for them. In particular, Deligne's interpolation category $\uRep(\GL_t)$ should be attached to the algebraic-oligomorphic group $\GL_{\infty}$. However, we do not yet know how this works.

Our working hypothesis is that algebraic-oligomorphic groups explain all pre-Tannakian categories in characteristic~0; the evidence for this idea is minimal though, so we will not call it a conjecture. The purpose of this paper is to confirm this hypothesis in the cases that are presently accessible. We believe this is the first piece of progress on Problem~\ref{prob:main} outside of the ``essentially Tannakian'' case (under which we include the super-Tannakian and Verlinde--Tannakian cases).

\subsection{Discrete pre-Tannakian categories}

We now introduce the class of categories that feature in our main theorem. We begin with some observations about algebraic groups.

Let $G$ be an algebraic group over a field $k$. Suppose that $G$ is discrete, i.e., its identity component is trivial. Then $G$ acts faithfully on a finite set $X$ (e.g., itself). In algebraic geometry terms, $X$ is the spectrum of a finite dimensional \'etale $k$-algebra $A$. We can view $A$ as an algebra object in the category $\Rep(G)$; since the action of $G$ is faithful, it follows that $A$ generates $\Rep(G)$ as a tensor category. As sums of tensor powers of $A$ are also \'etale algebras, this shows that every object of $\Rep(G)$ is a subquotient of an \'etale algebra. This reasoning is reversible too: if $G$ is an algebraic group and every object of $\Rep(G)$ is a subquotient of an \'etale algebra then $G$ is discrete.

The notion of \'etale algebra makes sense in any tensor category (see \S \ref{s:etale}). Motivated by the above discussion, we make the following definitions:

\begin{definition}
A pre-Tannakian category $\cC$ is \defn{discrete}\footnote{A category is called \defn{discrete} if it is equivalent to a set (regarded as a category). Since a non-zero $k$-linear category is never discrete in this sense, we do not expect our usage of the term to cause any confusion.} (resp.\ \defn{strongly discrete}) if every object is a subquotient (resp.\ quotient) of an \'etale algebra.
\end{definition}

We make some comments on this definition. For algebraic groups, discrete and strongly discrete are equivalent. This is not true generally: for example, in positive characteristic, Deligne's interpolation category $\uRep(\fS_t)$ is discrete but not strongly discrete (Example~\ref{ex:not-strong}). We show that a discrete pre-Tannakian category with enough projectives is strongly discrete (Proposition~\ref{prop:strong-proj}); in particular, a semi-simple discrete pre-Tannakian category is strongly discrete. We also show that a discrete pre-Tannakian  category is strongly discrete if and only if exact sequences split \'etale locally (Proposition~\ref{prop:strong}).

Finally, we emphasize that discrete pre-Tannakian categories are a natural class of categories to consider: they abstract many of the important properties of representation categories of finite groups. Thus, in a way, they generalize the notion of finite group.

\subsection{The main theorem}

As stated, our working hypothesis is that pre-Tannakian categories in characteristic~0 come from algebraic-oligomorphic groups. This predicts that discrete pre-Tannakian categories come from ordinary oligomorphic groups, as these are the algebraic-oligomorphic groups with trivial identity component. Our main theorem confirms this (in any characteristic):

\begin{theorem} \label{mainthm}
Let $\cC$ be a discrete pre-Tannakian category over an algebraically closed field $k$. Then there exists an admissible group $G$ and a $k$-valued measure $\mu$ for $G$ such that $\cC$ is a weak abelian envelope of $\uPerm(G; \mu)$. If $\cC$ is strongly discrete then $\cC$ is the abelian envelope of $\uPerm(G; \mu)$, the measure $\mu$ is normal, and $\cC$ is equivalent to $\uRep^{\rf}(G; \mu)$.
\end{theorem}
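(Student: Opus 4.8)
The plan is to recover the oligomorphic group $G$ from the Tannakian combinatorics of étale algebras in $\cC$, then check that the measure-theoretic data of $\cC$ matches $\uPerm(G;\mu)$. Concretely, I would first set up the category $\Et(\cC)$ of étale algebras in $\cC$. An étale algebra $A$ should be thought of as (the function algebra on) a "finite $G$-set," so $\Et(\cC)^{\op}$ will play the role of the category of transitive-ish $G$-sets; its objects are the $A$'s and morphisms are $\cC$-algebra maps $A \to B$. The key structural input is that $\cC$ is generated (as a tensor category, under subquotients) by étale algebras, so by a Tannaka/Deligne-type reconstruction the whole category is controlled by this combinatorial skeleton. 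The first real step is to show $\Et(\cC)$ has enough structure — finite products (from $\otimes$), a terminal object ($\bbone$), pullbacks along the relevant maps — to be the "category of orbits" of an admissible group $G$ in the sense of our oligomorphic-groups formalism from \S\ref{s:olig}; one then defines $G$ as the automorphism group of a suitable fiber-functor-like object (e.g.\ an ind-étale algebra that is a filtered colimit of all étale algebras, playing the role of $\mathcal{O}(G)$ acting on "$\Omega$"), topologized so that stabilizers of the finitely many $G$-orbits on $\Omega^n$ are open.

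Having produced $G$, the second step is to build the measure $\mu$. For each étale algebra $A$ (i.e.\ each "finite $G$-set"), the length/dimension data in $\cC$ — categorical dimensions $\dim_\cC$ of objects, or more precisely the "number of points" functional on étale algebras — assigns a scalar $\mu(A) \in k$, and the compatibility of $\dim_\cC$ with $\otimes$, duals, and short exact sequences should force $\mu$ to satisfy exactly the axioms of a $k$-valued measure for $G$. This uses that $\cC$ is pre-Tannakian (rigid, with $\dim$ taking values in $k$, and the trace/dimension being additive and multiplicative). Then there is a canonical tensor functor $\uPerm(G;\mu) \to \cC$ sending the basic object "$k[\Omega/H]$" to the corresponding étale algebra $A$; checking it is well-defined and fully faithful is essentially a Hom-space computation identifying $\Hom_\cC(A,B)$ with the free $k$-module on $G$-orbits of "correspondences," which is precisely how Hom's are defined in $\uPerm(G;\mu)$. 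Because étale algebras generate $\cC$ under subquotients, this functor realizes $\cC$ as a weak abelian envelope of $\uPerm(G;\mu)$ (it hits a generating class, and $\cC$ being abelian and every object a subquotient of the image gives the universal property after checking the envelope axioms).

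For the strongly discrete case, the extra hypothesis is that every object is a \emph{quotient} of an étale algebra, equivalently (Proposition~\ref{prop:strong}) that short exact sequences split étale-locally. This splitting is exactly what is needed to promote "weak abelian envelope" to "abelian envelope": the obstruction to the envelope being the genuine (universal) abelian envelope lives in $\Ext$-groups computed via the étale cover, and étale-local splitting kills it. Normality of $\mu$ should then follow from the fact that the regular algebra of an open subgroup has the expected dimension in $\cC$ — strong discreteness gives enough projectives-like behavior on the level of étale algebras to verify the normality inequality $\mu(-) \ge 0$-type conditions (more precisely the conditions in \cite{repst} guaranteeing $\uRep$ is well-behaved). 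Finally, identifying $\cC \simeq \uRep^{\rf}(G;\mu)$ amounts to showing the abelian envelope of $\uPerm(G;\mu)$ is computed by $\uRep^{\rf}$, which is part of the general theory of \cite{repst} once $\mu$ is known to be normal, combined with the fact that $\cC$ has finite-length objects.

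The main obstacle I anticipate is the \emph{construction and admissibility of $G$}: extracting an honest topological group with the finite-orbits property purely from the abstract category $\Et(\cC)$, and verifying it is admissible (an inverse limit of oligomorphic groups) rather than something wilder. This requires showing that $\Et(\cC)$ is, up to the right completion, the category of finite $G$-sets for a unique such $G$ — a reconstruction statement whose proof must carefully use pre-Tannakian finiteness (finite length, finite-dimensional Homs) to get the orbit-finiteness on all powers $\Omega^n$, and must handle the possibility that no single étale algebra is "faithful" (unlike the algebraic-group case) by passing to an ind-object. Everything downstream — the measure, the functor $\uPerm(G;\mu)\to\cC$, the envelope properties — is comparatively mechanical given the formalism of \cite{repst} and the étale-algebra results of \S\ref{s:etale}.
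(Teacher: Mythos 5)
Your overall architecture (recover a group from the \'etale algebras, equip it with a measure, map $\uPerm(G;\mu)$ into $\cC$, and upgrade via \'etale-local splitting in the strongly discrete case) matches the paper, but the step you yourself flag as the main obstacle --- producing an admissible $G$ --- is exactly where your proposal has a real gap, and your suggested mechanism is the wrong one. You propose to realize $G$ as the automorphism group of a ``fiber-functor-like'' ind-\'etale algebra; but the whole point of the pre-Galois formalism is that $\bS(\cC)=\Et(\cC)^{\op}$ need not admit a fiber functor (it is not a Galois category), so an automorphism-group-of-a-functor construction does not get off the ground in general. The paper's route is to prove that $\bS(\cC)$ satisfies the pre-Galois axioms (Theorem~\ref{thm:frob}) and then invoke the Fra\"iss\'e-theoretic reconstruction of \cite{bcat} (Theorem~\ref{thm:pregal}). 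Verifying those axioms is the technical heart of the argument and is not routine: besides atomic decomposition and nonemptiness of fiber products of atoms, one must prove effectivity of equivalence relations, which rests on the classification of \'etale subalgebras of an \'etale algebra by E-idempotents (Proposition~\ref{prop:etale-sub}) and a faithfully flat descent argument. None of this appears in your outline. Relatedly, your construction of $\mu$ from categorical dimensions does not address the multiplicativity axiom, which is a statement about maps of transitive $U$-sets for open subgroups $U$ (i.e.\ relative, fiberwise data), nor the compatibility of composition that pins down the measure; the paper instead defines $\mu(f)=f_*(1)$ on invariants and gets both the measure axioms and the fully faithful functor $\Phi\colon\uPerm(G;\mu)\to\cC$ in one stroke from the linearization theorem (Theorem~\ref{thm:linear}, i.e.\ \cite[Thm.~9.9]{repst}), via Theorem~\ref{thm:meas}.

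In the strongly discrete case there are two further problems. First, normality of $\mu$ is not a positivity-type condition (the field $k$ is arbitrary algebraically closed); it is the surjectivity of push-forwards of Schwartz functions, and the paper deduces it by splitting $f_*\colon k[X]\to k[Y]$ \'etale-locally, transporting the splitting through the fully faithful $\Phi$, and citing \cite[Prop.~11.7]{repst}; your ``expected dimension'' heuristic does not yield this. Second, the identification $\cC\simeq\uRep^{\rf}(G;\mu)$ is \emph{not} available as ``part of the general theory of \cite{repst} once $\mu$ is normal'': the pre-Tannakian results there require quasi-regularity and property~(P), which are not known here. The paper proves this equivalence directly (Lemma~\ref{lem:strong-3}), by constructing $M\mapsto\varinjlim_U\Hom_{\cC}(k[G/U],M)$ as a smooth module over the completed group algebra, using strong discreteness to get exactness, full faithfulness, preservation of simples, and the tensor structure. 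Your appeal to \'etale-local splitting to pass from weak to genuine abelian envelope is the right instinct, but it needs to be routed through an explicit criterion (the paper verifies the three conditions of \cite[Thm.~3.1.4]{CEAH}, with condition~(iii) supplied by Proposition~\ref{prop:strong}); ``the obstruction lives in Ext-groups and is killed'' is not an argument as it stands.
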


If $\cC$ is finitely generated as a tensor category then one can take $G$ to be oligomorphic (Remark~\ref{rmk:olig}). We refer to \S \ref{ss:abenv} for the definition of (weak) abelian envelope. Abelian envelopes are unique up to equivalence, but weak abelian envelopes need not be unique in general (Example~\ref{ex:weak}). Thus the theorem does not necessarily pin down $\cC$ uniquely in the general case. However, in the strongly discrete case, it determines $\cC$ very precisely. We highlight one special case of the theorem:

\begin{corollary}
Let $\cC$ be a semi-simple discrete pre-Tannakian category over an algebraically closed field $k$. Then there exists $G$ and $\mu$ as in the theorem such that $\cC$ is equivalent to the Karoubian envelope of $\uPerm(G; \mu)$. Moreover, $\mu$ is a regular measure (see \S \ref{ss:meas}).
\end{corollary}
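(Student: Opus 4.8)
The plan is to bootstrap from Theorem~\ref{mainthm}. A semi-simple abelian category has enough projectives (indeed, every object is projective, since all short exact sequences split), so $\cC$ is strongly discrete by Proposition~\ref{prop:strong-proj}. Applying the strongly discrete half of Theorem~\ref{mainthm} then yields an admissible group $G$, a normal $k$-valued measure $\mu$, and a fully faithful tensor functor $\iota\colon\uPerm(G;\mu)\to\cC$ exhibiting $\cC$ as the abelian envelope of $\uPerm(G;\mu)$, together with an equivalence $\cC\simeq\uRep^{\rf}(G;\mu)$.

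The remaining work is to see that, in the semi-simple case, this abelian envelope is already the Karoubian envelope, and that $\mu$ is regular. For the first point, since $\iota$ is fully faithful, the Karoubian envelope of $\uPerm(G;\mu)$ may be identified with the full subcategory of $\cC$ spanned by the direct summands of objects in the essential image of $\iota$; I would show this subcategory is all of $\cC$. By the construction of the envelope in the proof of Theorem~\ref{mainthm}, every \'etale algebra of $\cC$ lies in the essential image of $\iota$ (the permutation objects map to \'etale algebras, and these account for all of them). Now given any $M\in\cC$, strong discreteness provides a surjection $A\twoheadrightarrow M$ from an \'etale algebra $A$, which splits because $\cC$ is semi-simple; hence $M$ is a direct summand of $A$, and so $M$ lies in the Karoubian envelope of $\uPerm(G;\mu)$. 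Thus $\cC$ is the Karoubian envelope of $\uPerm(G;\mu)$.

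For regularity: by the theory of \cite{repst} recalled in \S\ref{ss:meas}, a normal measure $\mu$ is regular exactly when the Karoubian envelope of $\uPerm(G;\mu)$ is abelian -- equivalently, when it is semi-simple and coincides with $\uRep^{\rf}(G;\mu)$. Since we have just identified this Karoubian envelope with $\cC$, which is semi-simple by hypothesis, $\mu$ is regular.

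I expect the middle step to be the main obstacle. It requires knowing the construction of the (weak) abelian envelope from the proof of Theorem~\ref{mainthm} well enough to be sure that semi-simplicity collapses it onto the Karoubian envelope -- intuitively, that ``subquotient of an \'etale algebra'' degenerates to ``summand of a permutation object'' once all sequences split -- and it requires the cited characterization of regularity in the form ``$\mu$ regular $\iff$ Karoubian envelope abelian'' to be available at this level of generality. The other steps are formal.
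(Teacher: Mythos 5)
Your overall route is the paper's: semi-simplicity gives enough projectives (every object is projective), so $\cC$ is strongly discrete by Proposition~\ref{prop:strong-proj}, and Theorem~\ref{mainthm} (i.e.\ Theorem~\ref{thm:strong}) supplies $G$, $\mu$ and the fully faithful $\Phi\colon\uPerm(G;\mu)\to\cC$. Your identification of $\cC$ with the Karoubian envelope is also correct and is the intended argument: every \'etale algebra of $\cC$ is some $k[X]=\Phi(\Vec_X)$ via $\bS(\cC)\cong\bS(G)$, every object is a quotient of such an algebra by strong discreteness, the quotient splits by semi-simplicity, and since $\cC$ is idempotent-complete and $\Phi$ is fully faithful this exhibits $\cC$ as the idempotent completion of the image of $\uPerm(G;\mu)$.

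The gap is the regularity step. You deduce it from a purported criterion ``for normal $\mu$, regular $\Leftrightarrow$ the Karoubian envelope of $\uPerm(G;\mu)$ is abelian,'' attributed to the theory recalled in \S\ref{ss:meas}. That section contains no such statement: it only defines regular, quasi-regular and normal measures and records that quasi-regular implies normal. The direction you actually need (Karoubian envelope semi-simple abelian $\Rightarrow$ $\mu$ regular) is stated neither in this paper nor in the recalled results of \cite{repst}; what \cite{repst} provides is essentially the converse (regular measures yield semi-simple Karoubian envelopes). So, as written, the ``moreover'' clause rests on an unsupported citation, as you yourself suspected. The fix is short, and this is what the paper does inside the proof of Theorem~\ref{thm:strong}: for $X$ transitive with $f\colon X\to\bone$, the map $f_*\colon k[X]\to\bbone$ is surjective (it is dual to the injective unit $f^*$), semi-simplicity makes $\Gamma$ exact, and $\Gamma(k[X])=k$ since $k[X]$ is an atomic \'etale algebra over the algebraically closed field $k$; hence $\mu(X)=f_*(1)\neq 0$. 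Alternatively, your criterion's needed direction can be proved directly in your setup: $\Hom_{\cC}(\bbone,k[X])$ and $\Hom_{\cC}(k[X],\bbone)$ are one-dimensional, spanned by $\Phi(B_f)=f^*$ and $\Phi(A_f)=f_*$, and in a semi-simple category the composition pairing into $\End(\bbone)=k$ is nondegenerate, so $\mu(X)=f_*\circ f^*\neq 0$. With either repair the argument is complete.
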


Theorem~\ref{mainthm} shows that there is a fundamental connection between pre-Tannakian categories and oligomorphic groups. This justifies our work in \cite{repst}: the categories constructed there are not merely curious examples, but constitute a natural class.

\subsection{The oligomorphic fundamental group}

Perhaps the most interesting aspect of Theorem~\ref{mainthm} is that it produces a group from the category $\cC$. The construction of this group closely parallels that of the \'etale fundamental group, and does not require any discreteness hypothesis.

We recall the basic construction of the \'etale fundamental group. Let $X$ be an irreducible algebraic variety over an algebraically closed field $k$. Let $\bS(X)$ be the category of finite \'etale covers of $X$; we note that this is opposite to the category of finite \'etale algebras in the category $\operatorname{Coh}(X)$. The category $\bS(X)$ is a Galois category, meaning it is equivalent to the category of discrete $G$-sets for some profinite group $G$; in fact, $G$ can be constructed as the automorphism group of a fiber functor on $\bS(X)$. The \'etale fundamental group of $X$ is the group $G$.

Now let $\cC$ be a pre-Tannakian category. Let $\bS(\cC)$ be the opposite of the category of \'etale algebras in $\cC$. A key result in this paper (Theorem~\ref{thm:frob}) is that $\bS(\cC)$ is a \defn{pre-Galois} category, a class of categories introduced in \cite{bcat}. Pre-Galois categories are to Galois categories what pre-Tannakian categories are to Tannakian categories; in other words, they satisfy similar abstract categorical properties, but do not need to admit a fiber functor. The main theorem of \cite{bcat} states that any pre-Galois category is equivalent to the category $\bS(G)$ of (finitary smooth) $G$-sets, for some admissible group $G$. This enables us to make the following definition:

\begin{definition}
The \defn{oligomorphic fundamental group} of a pre-Tannakian category $\cC$, denoted $\pi^{\rm olig}(G)$, is an admissible group $G$ such that $\bS(\cC)$ is equivalent to $\bS(G)$.
\end{definition}

The group $G$ is not exactly well-defined, but this is typically not an issue; see \S \ref{ss:pre-gal} for details. Hypothetically, if $\cC$ is associated to an algebraic-oligomorphic group $H$, then we expect $\pi^{\rm olig}(\cC)$ to recover the component group $\pi_0(H)$ of $H$. We note that the oligomorphic fundamental group is similar in spirit to Deligne's fundamental group $\pi(\cC)$ \cite[\S 8]{Deligne1}; however, Deligne's group is a group object internal to the category of affine schemes in $\cC$, while the oligomorphic fundamental group is an actual group.

We show in Theorem~\ref{thm:meas} that $\pi^{\rm olig}(\cC)$ carries a natural measure $\mu$, and that the category $\uPerm(\pi^{\rm olig}(\cC), \mu)$ naturally maps to $\cC$. With this result in hand, Theorem~\ref{mainthm} follows without too much difficulty.

\subsection{Remaining problems}

While Theorem~\ref{mainthm} is a significant step towards a classification of discrete pre-Tannakian categories, more work is needed to obtain a truly complete classification. The following seem to be the three most important problems.
\begin{enumerate}
\item In Theorem~\ref{mainthm}, is $\cC$ the abelian envelope of $\uPerm(G; \mu)$ even when $\cC$ is not strongly discrete? If not, is the weak abelian envelope of $\uPerm(G; \mu)$ at least unique?
\item Given an oligomorphic group $G$ and a measure $\mu$, does a (weak) abelian envelope of $\uPerm(G; \mu)$ exist? We proved this under rather stringent hypotheses in \cite{repst}.
\item Finally, we would like more information about measures on oligomorphic groups. For instance, can one give general conditions on a group that ensures a measure exists?
\end{enumerate}

\subsection{Notation}

We list some of the important notation here:
\begin{description}[align=right,labelwidth=2.5cm,leftmargin=!]
\item[ $k$ ] the coefficient field
\item[ $\bzero$ ] the initial object of a category (e.g., the empty set)
\item[ $\bone$ ] the final object of a category (e.g., the one-point set)
\item[ $\bbone$ ] unit object of a tensor category (e.g., the trivial representation)
\item[ $\bS(G)$ ] the category of finitary $G$-sets
\item[ $\Et(\cC)$ ] the category of \'etale algebras in $\cC$
\item[ $\bS(\cC)$ ] the opposite category to $\Et(\cC)$
\end{description}

\section{Tensor categories} \label{s:tensor}

\subsection{Basic definitions}

We now recall some definitions related to tensor categories. We fix a field $k$ for the duration of the paper.

\begin{definition}
A \defn{tensor category} is a $k$-linear additive category equipped with a symmetric monoidal structure $\otimes$ that is $k$-bilinear. We write $\bbone$ for the unit object. A \defn{tensor functor} is a $k$-linear symmetric monoidal functor. 
\end{definition}

\begin{definition}
Let $X$ be an object in a tensor category $\cC$. A \defn{dual} of $X$ is an object $X^{\vee}$ equipped with maps
\begin{displaymath}
\alpha \colon \bbone \to X \otimes X^{\vee}, \qquad \beta \colon X^{\vee} \otimes X \to \bbone,
\end{displaymath}
called \defn{co-evaluation} and \defn{evaluation}, such that the compositions
\begin{displaymath}
\xymatrix@C=3em{
X \ar[r]^-{\alpha \otimes \id} & X \otimes X^{\vee} \otimes X \ar[r]^-{\id \otimes \beta} & X }
\end{displaymath}
\vskip-4ex
\begin{displaymath}
\xymatrix@C=3em{
X^{\vee} \ar[r]^-{\id \otimes \alpha} & X^{\vee} \otimes X \otimes X^{\vee} \ar[r]^-{\beta \otimes \id} & X^{\vee} }
\end{displaymath}
are the identity. The dual is unique up to unique isomorphism, if it exists. We say that $X$ is \defn{rigid} if it has a dual, and we we say that $\cC$ is \defn{rigid} if every object is.
\end{definition}

We note that if $X$ is rigid object in a tensor category then the functor $X \otimes -$ has both a left and a right adjoint (given by $X^{\vee} \otimes -$). In particular, the tensor product is exact in a rigid abelian tensor category. See \cite[\S 2.10]{EGNO} for details on duals.

\begin{definition} \label{defn:pt}
A tensor category $\cC$ is \defn{pre-Tannakian} if it satisfies the following conditions:
\begin{enumerate}
\item The category $\cC$ is abelian and every object has finite length.
\item For any two objects $X$ and $Y$, the $k$-vector space $\Hom(X,Y)$ is finite dimensional.
\item The category $\cC$ is rigid.
\item We have $\End(\bbone)=k$. \qedhere
\end{enumerate}
\end{definition}

We refer to the book \cite{EGNO} for general background on tensor categories. We warn the reader that \cite{EGNO} uses the term ``tensor category'' in a much stronger sense than us: ``pre-Tannakian'' in our sense is equivalent to ``symmetric tensor category'' in the sense of \cite{EGNO}. Our use of ``pre-Tannakian'' follows \cite[\S 2.1]{ComesOstrik}.

\subsection{Algebras}

Fix a tensor category $\cC$ for the remainder of \S \ref{s:tensor}. For our purposes, an \defn{algebra} in $\cC$, is an associative, unital, and commutative algebra object. We write $\Alg(\cC)$ for the category of algebras in $\cC$. For an algebra $A$, we write $\mu_A \colon A \otimes A \to A$ and $\eta_A \colon \bbone \to A$ for the multiplication and unit maps on $A$, and we drop the subscripts when possible. If $A$ and $B$ are algebras then $A \oplus B$ and $A \otimes B$ are naturally algebras. These constructions are the product and co-product in $\Alg(\cC)$, respectively. The algebras 0 and $\bbone$ are the final and initial objects of $\Alg(\cC)$, respectively. The following concept will feature prominently:

\begin{definition}
An algebra $A$ is \defn{atomic} if it is non-zero and does not decompose as a product, i.e., given an algebra isomorphism $A \cong B \oplus C$ we have $B=0$ or $C=0$.
\end{definition}

Let $A$ be an algebra. An \defn{$A$-module} is an object $M$ of $\cC$ equipped with a map $\mu_M \colon A \otimes M \to M$ satisfying the usual conditions. We note that $\bbone$-modules are exactly objects of $\cC$. We write $\Mod_A$ for the category of $A$-modules. An \defn{ideal} of $A$ is an $A$-submodule of $A$. Suppose $\cC$ is abelian. Given two $A$-modules $M$ and $N$, we define $M \otimes_A N$ as the co-equalizer of the natural maps $A \otimes M \otimes N \rightrightarrows M \otimes N$. With this structure, $\Mod_A$ is itself an abelian tensor category. We note that the unit object in $\Mod_A$ is $A$ itself. This observation allows us to reduce many statements about general algebras to the case of the unit object.

\subsection{Elemental notation}

Working with morphisms in tensor categories can be notationally cumbersome. We will therefore sometimes adopt a functor of points approach to allow us to use more familiar (and compact) notation. We illustrate this device with two examples here.

First, suppose that $A$ is an algebra in $\cC$. A \defn{$T$-point} of $A$ is simply a morphism $T \to A$ in $\cC$. If $x$ is a $T_1$-point of $A$ and $y$ is a $T_2$-point of $A$, we write $x \cdot y$ or $xy$ for the $(T_1 \otimes T_2)$-point given by the composition
\begin{displaymath}
\xymatrix@C=3em{
T_1 \otimes T_2 \ar[r]^-{x \otimes y} & A \otimes A \ar[r]^-{\mu} & A. }
\end{displaymath}
If $z$ is a $T_3$-point of $A$, then the associative law for $A$ takes the usual form
\begin{displaymath}
(x \cdot y) \cdot z = x \cdot (y \cdot z).
\end{displaymath}
Note that the two sides are morphisms $T_1 \otimes T_2 \otimes T_3 \to A$. Taking $x=y=z$ to be the identity map $A \to A$ recovers the standard definition of associativity.

Second, suppose that $\beta \colon X \otimes Y \to \bbone$ is a pairing. Given a $T_1$-point $x$ of $X$ and a $T_2$-point $y$ of $Y$, we write $\beta(x,y)$ for the composition
\begin{displaymath}
\xymatrix@C=3em{
T_1 \otimes T_2 \ar[r]^-{x \otimes y} & X \otimes Y \ar[r]^-{\beta} & \bbone. }
\end{displaymath}
Suppose that $\beta$ is a perfect pairing, i.e., there exists a morphism $\alpha \colon \bbone \to Y \otimes X$ such that $\alpha$ and $\beta$ are the co-evaluation and evaluation maps in a duality. We then have the following familiar statement: if $x$ is a $T_1$-point of $X$ such that $\beta(x,y)=0$ for all $T_2$-points $y$ of $Y$ (as $T_2$ varies) then $x=0$. Indeed, taking $y$ to be the identity of $Y$, we see that the composition
\begin{displaymath}
\xymatrix@C=3em{
T_1 \otimes Y \ar[r]^-{x \otimes \id} & X \otimes Y \ar[r]^-{\beta} & \bbone }
\end{displaymath}
is zero. It follows that the composition
\begin{displaymath}
\xymatrix@C=3em{
T_1 \ar[r]^-{\id \otimes \alpha} & T_1 \otimes Y \otimes X \ar[r]^-{x \otimes \id} & X \otimes Y \otimes X \ar[r]^-{\beta \otimes \id} & X }
\end{displaymath}
is zero as well. But this is $x$ by the axioms for $\alpha$ and $\beta$ (note that the order of the first two morphisms can essentially be switched).

\subsection{Invariant algebras}

For an object $M$ of $\cC$, we put $\Gamma(M)=\Hom_{\cC}(\bbone, M)$. This is a $k$-vector space that we refer to as the \defn{invariants} of $M$. If $A$ is an algebra in $\cC$ then $\Gamma(A)$ is an associative, unital, and commutative $k$-algebra, which we call the \defn{invariant algebra} of $A$. One easily sees that product decompositions of $A$ correspond to idempotents of $\Gamma(A)$. In particular, $A$ is atomic if and only if the only idempotents of $\Gamma(A)$ are $0 \ne 1$.

Recall that the following conditions on a commutative ring $R$ are equivalent:
\begin{enumerate}
\item $R$ is \defn{absolutely flat}, i.e., every $R$-module is flat.
\item Every element of $R$ is a product of a unit and an idempotent.
\item $R$ is von Neumann regular, i.e., for every $x \in R$ there is a $y \in R$ such that $x^2y=x$ and $y^2x=y$; the element $y$ is called the \defn{weak inverse} of $x$, and is unique.
\item $R$ is a subring of a product of fields closed under taking weak inverses.
\end{enumerate}

\begin{proposition} \label{prop:rigid-unit}
Suppose that $\cC$ is rigid abelian. Then $\Gamma(\bbone)$ is absolutely flat. Moreover, the map
\begin{displaymath}
\{ \text{idempotents of $\Gamma(\bbone)$} \} \to \{ \text{ideals of $\bbone$} \}
\end{displaymath}
taking an idempotent to the principal ideal it generates is a bijection.
\end{proposition}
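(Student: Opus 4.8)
The plan is to reduce the whole statement to the following assertion: \emph{every ideal $I$ of $\bbone$ is a direct summand of $\bbone$.} (Recall that a subobject $I\subseteq\bbone$ is automatically an ideal, since the module action $\bbone\otimes I\to\bbone$ coincides, under the unit isomorphism, with the inclusion $I\hookrightarrow\bbone$; likewise the principal ideal generated by an endomorphism $e\colon\bbone\to\bbone$, viewed as a $\bbone$-point of $\bbone$, is exactly $\im(e)$.) Granting the assertion, the proposition is routine. For the bijection: if $I$ is an ideal, write $\bbone=I\oplus M$; then the idempotent $e\in\Gamma(\bbone)=\End(\bbone)$ projecting onto $I$ along $M$ has $\im(e)=I$, giving surjectivity, while injectivity is the elementary fact that in the commutative ring $\Gamma(\bbone)$ an idempotent is recovered from the principal ideal it generates. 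For absolute flatness: given $x\in\End(\bbone)$, apply the assertion to the ideals $\ker(x)$ and $\im(x)$ to obtain decompositions $\bbone=\im(x)\oplus M=\ker(x)\oplus N$; then $N\hookrightarrow\bbone\xrightarrow{x}\bbone$ is a monomorphism with image $\im(x)$, so composing its inverse onto $\im(x)$ with the projection $\bbone\to\im(x)$ along $M$ yields $y\in\End(\bbone)$ with $xyx=x$, whence $\Gamma(\bbone)$ is von Neumann regular.

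So it remains to prove the assertion. Fix an ideal $I\subseteq\bbone$, with inclusion $i\colon I\hookrightarrow\bbone$ and quotient $\pi\colon\bbone\to Q:=\bbone/I$. The key lemma is that $Q^\vee\otimes I=0$. Since $\cC$ is rigid abelian, $\otimes$ is exact, and consequently $(-)^\vee$ is a contravariant equivalence of $\cC$ (it is its own quasi-inverse via the double-duality isomorphism), hence exact. Applying $(-)^\vee$ to $0\to I\xrightarrow{i}\bbone\xrightarrow{\pi}Q\to 0$ and then tensoring with $I$ produces an exact sequence $0\to Q^\vee\otimes I\to\bbone\otimes I\xrightarrow{i^\vee\otimes\id_I}I^\vee\otimes I\to 0$. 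Now the defining property of the transpose $i^\vee$ of $i$, together with the fact that the evaluation map of $\bbone$ is the unit isomorphism, gives $\mathrm{ev}_I\circ(i^\vee\otimes\id_I)=i\circ\lambda_I$ as maps $\bbone\otimes I\to\bbone$, where $\lambda_I\colon\bbone\otimes I\xrightarrow{\sim}I$ is the unitor. Since $i\circ\lambda_I$ is a monomorphism, so is $i^\vee\otimes\id_I$, and therefore $Q^\vee\otimes I=0$.

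With the lemma in hand, tensor $0\to I\to\bbone\xrightarrow{\pi}Q\to 0$ with $Q^\vee$: exactness of $\otimes$ and $Q^\vee\otimes I=0$ show that $\id_{Q^\vee}\otimes\pi\colon Q^\vee\otimes\bbone\to Q^\vee\otimes Q$ is an isomorphism. Under the adjunction $\Hom_\cC(Q,-)\cong\Hom_\cC(\bbone,Q^\vee\otimes-)$, the post-composition map $\pi_*\colon\Hom_\cC(Q,\bbone)\to\Hom_\cC(Q,Q)$ is identified with the bijection induced by $\id_{Q^\vee}\otimes\pi$; in particular $\pi_*$ is surjective, so $\id_Q$ lifts to a section $s\colon Q\to\bbone$ of $\pi$. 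Hence $I$ is a direct summand of $\bbone$, proving the assertion.

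The main obstacle, I expect, is the key lemma $Q^\vee\otimes I=0$ — specifically the identity $\mathrm{ev}_I\circ(i^\vee\otimes\id_I)=i\circ\lambda_I$, which is the one place where rigidity is used in an essential rather than purely formal way. Everything downstream of the lemma, and the entire reduction to the assertion about ideals, is elementary diagram-chasing in an abelian category.
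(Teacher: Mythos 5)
Your proof is correct, but it takes a genuinely different (and more self-contained) route than the paper. The paper's proof outsources the two key facts to Deligne--Milne: the idempotent/ideal bijection is quoted from \cite[Remark~1.18]{DeligneMilne}, and the splitting $\bbone=\ker(a)\oplus\im(a)$ is obtained from \cite[Proposition~1.17]{DeligneMilne} by observing, via the self-duality $\bbone\otimes\bbone\to\bbone$, that $\ker(a)$ is the orthogonal complement of $\im(a)$; von Neumann regularity is then deduced exactly as you do, by inverting $a$ on its image and extending by zero. You instead prove the underlying splitting statement from scratch: your reduction to ``every subobject of $\bbone$ is a direct summand'' is precisely the content of the Deligne--Milne results, and your key lemma $Q^\vee\otimes I=0$ (proved via exactness of dualization and the identity $\mathrm{ev}_I\circ(i^\vee\otimes\id_I)=i\circ\lambda_I$), combined with the adjunction argument producing a section of $\pi$, replaces their orthogonal-complement argument. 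What the paper's route buys is brevity; what yours buys is a proof with no external input, and the intermediate lemma is of independent interest. Two small points worth tightening: your injectivity claim is phrased in terms of ring-theoretic principal ideals of $\Gamma(\bbone)$, whereas the map in the statement lands in subobjects of $\bbone$ --- but the same one-line argument works (if $\im(e)=\im(e')$ then $e$ restricts to the identity on that common image, so $ee'=e'$ and $e'e=e$, and commutativity of $\End(\bbone)$ gives $e=e'$); and you verify $xyx=x$, while the paper's stated form of von Neumann regularity asks for $x^2y=x$ and $y^2x=y$ --- by commutativity $xyx=x^2y$, and replacing $y$ by $yxy$ yields both conditions, so this is only a cosmetic adjustment.
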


\begin{proof}
The statement about ideals is \cite[Remark~1.18]{DeligneMilne}. Absolute flatness of $\Gamma(\bbone)$ follows from this, as we now explain. Let $a \in \Gamma(\bbone)$. The isomorphism $\beta \colon \bbone \otimes \bbone \to \bbone$ is a self-duality. Let $x$ and $y$ be points of $A$. Then $\beta(ax,y)=axy=\beta(x,ay)$. It follows that $ax=0$ is equivalent to $0=\beta(ax,y)=\beta(x,ay)$ for all $y$. We thus see that $\ker(a)$ is exactly the orthogonal complement of $\im(a)$, and so we have $A=\ker(a) \oplus \im(a)$ by \cite[Proposition~1.17]{DeligneMilne}. It follows that $a \colon \im(a) \to \im(a)$ is an isomorphism. Let $b$ be its inverse, extended by~0 to $\ker(a)$. Then $a$ and $b$ are weak inverses of each other, and so $\Gamma(\bbone)$ is von Neumann regular.
\end{proof}

The proposition allows one to understand how $\bbone$ decomposes in various situations. Here is one important example.

\begin{corollary} \label{cor:pretan-unit}
If $\cC$ is pre-Tannakian then $\bbone$ is simple.
\end{corollary}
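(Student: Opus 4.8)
The plan is to derive this directly from Proposition~\ref{prop:rigid-unit}. A pre-Tannakian category is in particular rigid abelian, so that proposition applies and tells us that sending an idempotent of $\Gamma(\bbone)$ to the principal ideal it generates is a bijection onto the set of ideals of $\bbone$. The first thing to pin down is that "ideals of $\bbone$" here just means subobjects of $\bbone$: since $\bbone$-modules are precisely the objects of $\cC$, a $\bbone$-submodule of $\bbone$ is nothing but a subobject of $\bbone$. So it suffices to show that $\bbone$ has exactly two subobjects and is nonzero.

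Next I would invoke axiom (d) of Definition~\ref{defn:pt}: $\Gamma(\bbone)=\Hom_\cC(\bbone,\bbone)=\End(\bbone)=k$. A field has exactly two idempotents, namely $0$ and $1$, and these are distinct. Feeding this into the bijection of Proposition~\ref{prop:rigid-unit}, the only ideals of $\bbone$ are the one generated by $0$, i.e.\ the zero subobject, and the one generated by $1$, i.e.\ all of $\bbone$. Finally, $\bbone\ne 0$, since a zero object has trivial endomorphism ring whereas $\End(\bbone)=k\ne 0$. Hence $\bbone$ is a nonzero object with no nontrivial proper subobject, i.e.\ it is simple.

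I do not anticipate a genuine obstacle: all the real work is already packaged in Proposition~\ref{prop:rigid-unit} (and behind it the Deligne--Milne orthogonality statements it cites). The only points requiring a moment's care are the translation between "ideals of $\bbone$" and "subobjects of $\bbone$", and remembering to note $\bbone\ne 0$ so that "two ideals" genuinely yields "simple" rather than "simple or zero". Both are one-line remarks.
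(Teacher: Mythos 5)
Your proof is correct and follows the same route as the paper: the paper also deduces simplicity from the fact that $\Gamma(\bbone)=k$ is a field, hence has no non-trivial idempotents, and then applies the idempotent--ideal bijection of Proposition~\ref{prop:rigid-unit}. Your extra remarks (ideals of $\bbone$ are exactly subobjects, and $\bbone\neq 0$ since $\End(\bbone)=k\neq 0$) are just explicit versions of steps the paper leaves implicit.
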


\begin{proof}
Since $\Gamma(\bbone)$ is a field, it has no non-trivial idempotents, and so $\bbone$ has no non-trivial ideals, which exactly means it is a simple object.
\end{proof}

We close this discussion with one additional property of $\Gamma$ in the pre-Tannakian case.

\begin{proposition} \label{prop:Gamma-tensor}
Suppose $\cC$ is pre-Tannakian. Then for any objects $X$ and $Y$ in $\cC$, the natural map $\Gamma(X) \otimes_k \Gamma(Y) \to \Gamma(X \otimes Y)$ is injective.
\end{proposition}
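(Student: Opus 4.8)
The plan is to translate everything into Hom-spaces using rigidity, and then feed on the fact that $\bbone$ is simple (Corollary~\ref{cor:pretan-unit}). Since $\cC$ is rigid there are natural $k$-linear isomorphisms $\Gamma(X) = \Hom_\cC(\bbone,X) \cong \Hom_\cC(X^\vee,\bbone)$ and $\Gamma(X \otimes Y) = \Hom_\cC(\bbone, X \otimes Y) \cong \Hom_\cC(X^\vee, Y)$, the latter sending $h$ to $(\beta \otimes \id_Y) \circ (\id_{X^\vee} \otimes h)$, where $\beta \colon X^\vee \otimes X \to \bbone$ is evaluation. A short diagram chase with these isomorphisms identifies the natural map $\Gamma(X) \otimes_k \Gamma(Y) \to \Gamma(X \otimes Y)$ with the composition pairing
\[
\Hom_\cC(X^\vee,\bbone) \otimes_k \Hom_\cC(\bbone,Y) \to \Hom_\cC(X^\vee,Y), \qquad \phi \otimes g \mapsto g \circ \phi .
\]
Because every object of $\cC$ is isomorphic to a dual, it therefore suffices to prove: for all objects $Z, Y$ of $\cC$, the composition map $\Hom_\cC(Z,\bbone) \otimes_k \Hom_\cC(\bbone,Y) \to \Hom_\cC(Z,Y)$ is injective.

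To do this I would take an element $\xi$ of $\Hom_\cC(Z,\bbone) \otimes_k \Hom_\cC(\bbone,Y)$ lying in the kernel and write $\xi = \sum_{i=1}^n \phi_i \otimes g_i$ with the $\phi_i \in \Hom_\cC(Z,\bbone)$ linearly independent over $k$ (always possible for an element of a tensor product of $k$-vector spaces). Let $\Phi \colon Z \to \bbone^n$ be the morphism with components $\phi_1, \dots, \phi_n$ and let $G \colon \bbone^n \to Y$ be the morphism whose restriction to the $i$-th summand is $g_i$; then $\sum_i g_i \circ \phi_i = G \circ \Phi$, so the hypothesis reads $G \circ \Phi = 0$. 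The key claim is that $\Phi$ is an epimorphism. Granting this, right-cancellation gives $G = 0$, hence each $g_i$ — being $G$ composed with the inclusion of the $i$-th summand — vanishes, so $\xi = 0$, proving injectivity.

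It remains to establish the claim. The image of $\Phi$ is a subobject of $\bbone^n$. Since $\bbone$ is simple with $\End_\cC(\bbone) = k$, the object $\bbone^n$ is semisimple of finite length with every composition factor isomorphic to $\bbone$, so its subobjects are direct summands; if $\im(\Phi) \subsetneq \bbone^n$, then $\bbone^n/\im(\Phi)$ is a nonzero object of the same kind, and composing the quotient map with a projection onto one copy of $\bbone$ yields a nonzero morphism $\lambda \colon \bbone^n \to \bbone$ with $\lambda \circ \Phi = 0$. But under the identification $\Hom_\cC(\bbone^n,\bbone) = \Hom_\cC(\bbone,\bbone)^n = k^n$ by the coordinate projections, the relation $\lambda \circ \Phi = 0$ is exactly a nontrivial $k$-linear dependence among the $\phi_i$ — a contradiction. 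Hence $\im(\Phi) = \bbone^n$ and $\Phi$ is epi.

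The only genuinely nonformal ingredient is the last paragraph: that linearly independent morphisms $Z \to \bbone$ assemble into an epimorphism $Z \to \bbone^n$, which is exactly where the simplicity of $\bbone$ (Corollary~\ref{cor:pretan-unit}) is used; everything else is bookkeeping with the rigidity isomorphisms. The one point requiring a little care is verifying that the natural map does coincide with the composition pairing up to the canonical coherence isomorphisms; I would either carry out that routine chase, or else avoid it entirely by arguing with the functor-of-points/pairing notation of \S\ref{s:tensor}, testing the morphism $\sum_i f_i \otimes g_i \colon \bbone \to X \otimes Y$ against points of the dual $(X \otimes Y)^\vee \cong Y^\vee \otimes X^\vee$ and using that a perfect pairing detects zero morphisms.
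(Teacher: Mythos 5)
Your proof is correct, but it runs along a different track than the paper's. The paper argues directly in $\cC$: choosing a basis of $V=\Gamma(X)$ gives a map $V\otimes\bbone\to X$ which is injective because $\bbone$ is simple; tensoring this inclusion with $Y$ (using exactness of $\otimes$, which is automatic in a rigid abelian category) and then applying the left-exact, additive functor $\Gamma$ gives the injection $\Gamma(X)\otimes_k\Gamma(Y)\to\Gamma(X\otimes Y)$ in three lines. You instead use the duality adjunction to rewrite $\Gamma(X\otimes Y)\cong\Hom(X^\vee,Y)$ and identify the map with the composition pairing $\Hom(Z,\bbone)\otimes_k\Hom(\bbone,Y)\to\Hom(Z,Y)$, and then prove injectivity by assembling linearly independent maps $Z\to\bbone$ into an epimorphism $Z\to\bbone^n$ and cancelling. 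Your key claim is exactly the dual of the paper's key claim (linearly independent invariants $\bbone\to X$ give a monomorphism $\bbone^n\to X$, versus linearly independent functionals $Z\to\bbone$ give an epimorphism $Z\to\bbone^n$), and both rest on the same inputs: simplicity of $\bbone$ and $\End(\bbone)=k$, so that subobjects (or quotients) of $\bbone^n$ correspond to subspaces of $k^n$. What the paper's route buys is brevity and no bookkeeping: it never needs the translation of the natural map into the composition pairing, which is the one step you rightly flag as requiring a (routine but real) verification. What your route buys is that it sidesteps exactness of $\otimes$ and the left-exactness of $\Gamma$ entirely, and it does not use finite-dimensionality of $\Gamma(X)$; also, proving the statement for an arbitrary source $Z$ makes your aside that every object is a double dual unnecessary. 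No gaps: the identification with the composition pairing, the reduction to linearly independent $\phi_i$, the semisimplicity of $\bbone^n$, and the cancellation argument are all sound.
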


\begin{proof}
Put $V=\Gamma(X)$. We have a natural map $V \to X$, where here $V$ really means $V \otimes \bbone$, i.e., a direct sum of $\dim_k(V)$ copies of $\bbone$. Since $\bbone$ is simple, this map is injective. Since $\otimes$ is exact, the map $V \otimes Y \to X \otimes Y$ is injective. Since $\Gamma$ is left exact and additive, we find that the map $V \otimes_k \Gamma(Y) \to \Gamma(X \otimes Y)$ is injective, and this completes the proof.
\end{proof}

\subsection{Faithful flatness}

Suppose that $\cC$ is abelian. We say that $M$ is \defn{flat} if $M \otimes -$ is exact, and \defn{faithfully flat} if it is flat and moreover $M \otimes N=0$ implies $N=0$. We note that if $M$ is faithfully flat then a sequence
\begin{displaymath}
\xymatrix{ X_1 \ar[r] & X_2 \ar[r] & X_3 }
\end{displaymath}
is exact if and only if the sequence
\begin{displaymath}
\xymatrix{ M \otimes X_1 \ar[r] & M \otimes X_2 \ar[r] & M \otimes X_3 }
\end{displaymath}
is exact. We say that an $A$-module is (faithfully) flat if it is so as an object of the tensor category $\Mod_A$.

\begin{proposition}
Suppose $\otimes$ is exact. Let $M$ be an object of $\cC$, and suppose there is an injection $f \colon \bbone \to M' \otimes M$ for some object $M'$. Then $M$ is faithfully flat.
\end{proposition}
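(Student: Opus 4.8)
The plan is to separate the two assertions. Flatness of $M$ is immediate: by hypothesis the bifunctor $\otimes$ is exact, so in particular $M \otimes -$ is an exact functor, which is the definition of $M$ being flat. So the only real content is faithfulness, i.e., that $M \otimes N = 0$ forces $N = 0$.

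For that, I would argue as follows. Suppose $M \otimes N = 0$. Tensoring with $M'$ gives $M' \otimes M \otimes N = 0$ (where I use the associativity constraint to make sense of the triple tensor product without parentheses). Now apply the functor $- \otimes N$ to the injection $f \colon \bbone \to M' \otimes M$; since $\otimes$ is exact, $- \otimes N$ preserves monomorphisms, so $f \otimes \id_N \colon \bbone \otimes N \to (M' \otimes M) \otimes N$ is again injective. Using the unit isomorphism $\bbone \otimes N \cong N$ on the source and the computation $(M' \otimes M) \otimes N \cong M' \otimes (M \otimes N) = M' \otimes 0 = 0$ on the target, we obtain an injection $N \hookrightarrow 0$, hence $N = 0$. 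This proves $M$ is faithfully flat.

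I do not anticipate any genuine obstacle here; the whole point is that the existence of a single injection $\bbone \hookrightarrow M' \otimes M$ lets one detect $N$ inside $M' \otimes M \otimes N$. The only minor care needed is bookkeeping with the associativity and unit constraints, and the observation that exactness of $\otimes$ is being used in the (perhaps less familiar) direction ``$- \otimes N$ preserves monos'' rather than ``$M \otimes -$ preserves monos.''
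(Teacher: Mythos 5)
Your proposal is correct and is essentially the paper's argument: both tensor the injection $\bbone \to M' \otimes M$ with $N$, use exactness of $\otimes$ to keep it injective, and conclude that $M \otimes N = 0$ would force $N = 0$ (the paper phrases this contrapositively, starting from $N \neq 0$). The only cosmetic difference is that you spell out the unit and associativity constraints explicitly.
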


\begin{proof}
Every object of $\cC$ is flat by assumption. Let $N$ be a non-zero object of $\cC$. Since $\otimes$ is exact, the map $f \otimes \id \colon N \to M' \otimes M \otimes N$ is injective. Thus $M' \otimes M \otimes N$ is non-zero, and so $M \otimes N$ is non-zero, and so $M$ is faithfully flat.
\end{proof}

\begin{corollary} \label{cor:ff-1}
Suppose $\otimes$ is exact and $A$ is an algebra for which the unit map $\bbone \to A$ is injective. Then $A$ is faithfully flat.
\end{corollary}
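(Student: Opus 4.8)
The plan is to derive this corollary as an immediate consequence of the proposition immediately preceding it, whose hypothesis we need only reproduce in the special case $M = A$. The key observation is that the unit map $\eta_A \colon \bbone \to A$ already \emph{is}, after a harmless reidentification, an injection of exactly the form required there.

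Concretely, I would apply the proposition with $M = A$ and $M' = \bbone$. Then $M' \otimes M = \bbone \otimes A$ is canonically isomorphic to $A$ via the unit constraint of the monoidal structure; composing the inverse of this isomorphism with $\eta_A$ produces a morphism $f \colon \bbone \to \bbone \otimes A$. Since $\eta_A$ is a monomorphism by hypothesis and the unit constraint is an isomorphism, $f$ is a monomorphism. As $\otimes$ is exact by hypothesis, all the hypotheses of the proposition are satisfied, and it yields that $A = M$ is faithfully flat as an object of $\cC$, which is precisely the assertion of the corollary.

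There is essentially no obstacle: the only facts used beyond the proposition are that the unit constraint of a monoidal category is invertible and that composing a monomorphism with an isomorphism on either side again gives a monomorphism. The one point perhaps worth spelling out for the reader is the intended meaning of ``faithfully flat'' in the statement — since $A$ is the unit object of $\Mod_A$ it is tautologically faithfully flat \emph{there}, so the content is faithful flatness of $A$ viewed as an object of $\cC$, and this is exactly what the proposition provides.
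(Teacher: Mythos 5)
Your proposal is correct and matches the paper's intended argument: the corollary is stated without proof precisely because it is the immediate specialization $M = A$, $M' = \bbone$ of the preceding proposition, with the unit constraint handling the identification $\bbone \otimes A \cong A$ exactly as you describe.
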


\begin{corollary} \label{cor:nonzero-tensor}
Suppose $\cC$ is pre-Tannakian. Then every non-zero object is faithfully flat.
\end{corollary}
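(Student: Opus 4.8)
The plan is to reduce immediately to the proposition preceding Corollary~\ref{cor:ff-1}, which asserts that if $\bbone$ injects into $M' \otimes M$ for some object $M'$, then $M$ is faithfully flat. Thus, given a nonzero object $M$ of the pre-Tannakian category $\cC$, it suffices to produce such an injection, and the natural candidate for $M'$ is the dual $M^{\vee}$.

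First I would recall that since $\cC$ is rigid and abelian, the tensor product is exact (as noted just after the definition of rigidity), so every object is automatically flat; only the faithfulness needs argument. Next I would take the co-evaluation map $\alpha \colon \bbone \to M \otimes M^{\vee}$ from the duality data for $M$. The key point is that $\alpha \neq 0$: were it zero, the composite $M \xrightarrow{\alpha \otimes \id} M \otimes M^{\vee} \otimes M \xrightarrow{\id \otimes \beta} M$ would vanish, whereas the triangle identity for a dual forces it to equal $\id_M$, which is nonzero because $M \neq 0$. Since $\bbone$ is simple in a pre-Tannakian category (Corollary~\ref{cor:pretan-unit}), any nonzero morphism out of $\bbone$ is a monomorphism, so $\alpha$ is injective.

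Finally I would compose $\alpha$ with the symmetry isomorphism $M \otimes M^{\vee} \cong M^{\vee} \otimes M$ to get an injection $\bbone \to M^{\vee} \otimes M$, and then apply the preceding proposition with $M' = M^{\vee}$ to conclude that $M$ is faithfully flat.

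I do not anticipate any real obstacle here; the only subtlety is bookkeeping about the order of the tensor factors (the proposition wants $M$ as the right-hand factor, whereas $\alpha$ lands in $M \otimes M^{\vee}$), which the symmetry constraint handles. Everything else is a direct application of the simplicity of $\bbone$ together with the triangle identities for a dual.
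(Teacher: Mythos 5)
Your proof is correct and follows essentially the same route as the paper: both use simplicity of $\bbone$ to see the co-evaluation map $\bbone \to M \otimes M^{\vee}$ is injective and then invoke the proposition preceding Corollary~\ref{cor:ff-1}. You merely make explicit two points the paper leaves implicit (that co-evaluation is nonzero via the triangle identity when $M \neq 0$, and the reordering of tensor factors by the symmetry), which is fine.
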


\begin{proof}
Let $M$ be a non-zero object of $\cC$. Since $\bbone$ is simple, the co-evaluation map $\bbone \to M \otimes M^{\vee}$ is injective, and so $M$ is faithfully flat.
\end{proof}

\section{Oligomorphic groups and their representations} \label{s:olig}

\subsection{Oligomorphic groups}

An \defn{oligomorphic group} is a permutation group $(G, \Omega)$ such that $G$ has finitely many orbits on $\Omega^n$ for all $n \ge 0$. Some examples are: the infinite symmetric group; the infinite general linear group over a finite field; the group of order-preserving self-bijections of the real line; and the automorphism group of the Rado graph. We refer to Cameron's book \cite{Cameron} for general background on oligomorphic groups.

Fix an oligomorphic group $(G, \Omega)$. For a finite subset $A$ of $\Omega$, let $G(A)$ be the subgroup of $G$ fixing each element of $A$. The groups $G(A)$ form a neighborhood basis of the identity for a topology on $G$. This topology is Hausdorff, non-archimedean (open subgroups form a neighborhood basis of the identity), and Roelcke pre-compact (if $U$ and $V$ are open subgroups then $V \backslash G/U$ is finite) \cite[\S 2.2]{repst}. We define an \defn{admissible group} to be a topological group with these three properties. Thus every oligomorphic group gives rise to an admissible group. While the main examples of interest are oligomorphic, theoretically it is more convenient to work with admissible groups.

Let $G$ be an admissible group. We say that an action of $G$ on a set $X$ is \defn{smooth} if all stabilizers are open. We use the term ``$G$-set'' to mean ``set equipped with a $G$-action.'' We say that a $G$-set is \defn{finitary} if it has finitely many orbits. We write $\bS(G)$ for the category of finitary $G$-sets, with morphisms being $G$-equivariant maps. An important property of $\bS(G)$ is that it is closed under products and fiber products; see \cite[\S 2.3]{repst}.

\subsection{Pre-Galois categories} \label{ss:pre-gal}

Let $\cB$ be a category with finite co-products. Write $\bzero$ for the initial object, and we say an object is \defn{empty} if it is isomorphic to $\bzero$. We say that an object $X$ is an \defn{atom} if it does not decompose under co-product, that is, $X \cong Y \amalg Z$ implies either $Y$ or $Z$ is empty. The following definition, introduced in \cite{bcat}, gives a combinatorial analog of pre-Tannakian categories.

\begin{definition} \label{defn:bcat}
We say that an essentially small category $\cB$ is \defn{pre-Galois category} if the following conditions hold:
\begin{enumerate}
\item The category $\cB$ has finite co-products.
\item Every object is isomorphic to a finite co-product of atoms.
\item Given objects $X$, $Y$, and $Z$, with $X$ an atom, the natural map
\begin{displaymath}
\Hom(X, Y) \times \Hom(X, Z) \to \Hom(X, Y \amalg Z)
\end{displaymath}
is a bijection.
\item The category $\cB$ has fiber products and a final object $\bone$.
\item A monomorphism of atoms is an isomorphism.
\item If $X \to Z$ and $Y \to Z$ are maps of atoms then $X \times_Z Y$ is non-empty.
\item The final object $\bone$ is atomic.
\item Equivalence relations in $\cB$ are effective (see \cite[Definition~4.9]{bcat}).
\end{enumerate}
We say that $\cB$ is a \defn{$\rB$-category} if it satisfies (a)--(e).
\end{definition}

If $G$ is an admissible group then the category $\bS(G)$ is a pre-Galois category, and this is the motivating example. The main result of \cite{bcat} establishes the converse:

\begin{theorem} \label{thm:pregal}
If $\cB$ is a pre-Galois category then there exists a admissible group $G$ such that $\cB \cong \bS(G)$.
\end{theorem}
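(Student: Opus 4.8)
The plan is to mimic Grothendieck's reconstruction of the fundamental group of a Galois category, with one crucial change: a pre-Galois category need not admit a set-valued fibre functor, so the fibre functor must be \emph{built} rather than chosen. I will construct it as the functor pro-represented by a ``generic pro-atom'' $\widehat{\Omega}$, and recover $G$ as $\Aut(\widehat{\Omega})$. First I would reduce to atoms: by axioms (a)--(c) the category $\cB$ is determined by its full subcategory $\cA$ of atoms together with the finite-coproduct structure (every object is a finite coproduct of atoms, and $\Hom(X,-)$ with $X$ an atom commutes with finite coproducts), and $\bS(G)$ is assembled in exactly the same way from the transitive smooth $G$-sets; so it is enough to produce, for a suitable $G$, a coproduct-compatible equivalence between $\cA$ and the category of transitive smooth $G$-sets.

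The heart of the construction is the pro-atom $\widehat{\Omega}$, and this is where amalgamation (axiom f) enters. Using (f) --- non-emptiness of fibre products of atoms --- together with (e), one builds by transfinite recursion a cofiltered inverse system $(X_i)_{i\in I}$ of atoms, with \emph{chosen} transition maps, such that (i) every atom of $\cB$ receives a morphism from some $X_i$, and (ii) the system is \emph{homogeneous}: any two parallel maps from system objects into a common atom that agree on enough of the lower structure are identified after passing to a further index --- the categorical analogue of the back-and-forth/Fra\"iss\'e construction of a universal homogeneous object. Set $\widehat{\Omega}=\varprojlim_i X_i\in\mathrm{Pro}(\cB)$ and $G=\Aut_{\mathrm{Pro}(\cB)}(\widehat{\Omega})$, topologized by taking the stabilizers $U_X$ of the structure maps $\widehat{\Omega}\to X$ (one for each atom $X$) as a neighbourhood basis of the identity; axioms (f) and (g) make these closed under finite intersection, so this is a group topology. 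Then $G$ is admissible: Hausdorff because $g\in\bigcap_X U_X$ acts trivially on each $X_i$, hence on $\widehat{\Omega}$; non-archimedean by construction; and Roelcke pre-compact because $U_X\backslash G/U_Y$ is the set of $G$-orbits on $\Hom(\widehat{\Omega},X)\times\Hom(\widehat{\Omega},Y)=\Hom(\widehat{\Omega},X\times Y)$, which is finite since $X\times Y$ is a finite coproduct of atoms (axiom b) and each $\Hom(\widehat{\Omega},Z)$ with $Z$ an atom is a single $G$-orbit by the homogeneity (ii).

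It remains to show the comparison functor $F\colon\cB\to\bS(G)$, $F(X)=\Hom_{\mathrm{Pro}(\cB)}(\widehat{\Omega},X)$ with its natural $G$-action, is an equivalence. Since $\Hom(\widehat{\Omega},-)$ turns finite coproducts into disjoint unions, $F$ is coproduct-preserving, and by (ii) it carries atoms to transitive smooth $G$-sets. For fullness and faithfulness one reduces (using (c)) to the case where the source is an atom $X$, identifies $\Hom_{\bS(G)}(F(X),F(Y))$ with the $U_X$-fixed points $F(Y)^{U_X}$, and checks that $f\mapsto f\circ(\widehat{\Omega}\to X)$ is a bijection $\Hom_\cB(X,Y)\xrightarrow{\ \sim\ }F(Y)^{U_X}$: injectivity holds because $\widehat{\Omega}\to X$ is left-cancellable against atoms (any equalizer it factors through is, by (b), a coproduct of atoms, hence contains an atomic subobject of $X$ which by (e) is all of $X$), and surjectivity follows from a factorization argument that uses effectivity of equivalence relations (axiom h) to produce the desired map out of a quotient. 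For essential surjectivity, every open $U\le G$ contains some $U_X$, so $G/U$ is a $G$-equivariant quotient of $F(X)$; the corresponding equivalence relation on $F(X)$ is a union of $G$-orbits, hence equals $F(R)$ for a subobject $R\subseteq X\times X$ (axiom b), which is an equivalence relation in $\cB$, so by (h) it has an effective quotient $X\to X'$ with $F(X')\cong G/U$. By the reduction to atoms, $F$ is an equivalence $\cB\simeq\bS(G)$.

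The serious obstacle is the construction of $\widehat{\Omega}$ in the second paragraph: producing the coherent system of ``basepoints'' and proving it is homogeneous enough for (ii) to hold. This is the categorical incarnation of Fra\"iss\'e's amalgamation construction, with axiom (f) playing the role of the amalgamation property; the uniqueness of $\widehat{\Omega}$ up to isomorphism (a back-and-forth argument) is also what lies behind the fact that $G$ is pinned down only up to the appropriate notion of equivalence. Everything in the last paragraph is then a diagram chase in which axioms (e) and (h) do the real bookkeeping.
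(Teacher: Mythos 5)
You should first note that this paper does not actually prove Theorem~\ref{thm:pregal}: it is quoted from \cite{bcat}, whose very title (``Pre-Galois categories and Fra\"iss\'e's theorem'') signals the Fra\"iss\'e-theoretic strategy you outline, so your proposal is aligned in spirit with the intended proof. As a proof, however, it has a genuine gap exactly at its center: the existence of the universal homogeneous pro-atom $\widehat{\Omega}$ is asserted (``by transfinite recursion'') but never established, and that existence statement essentially \emph{is} the theorem. The classical Fra\"iss\'e construction yields a universal homogeneous limit only under countability hypotheses (countably many isomorphism types, a countable chain of approximations); a pre-Galois category is merely essentially small, so there may be uncountably many isomorphism classes of atoms, and a recursion of length greater than $\omega$ does not obviously produce homogeneity --- the back-and-forth argument showing that each $\Hom(\widehat{\Omega},Z)$, $Z$ an atom, is a single $G$-orbit is an $\omega$-length zig-zag. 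This is precisely why the paper remarks (\S\ref{ss:pre-gal}) that a canonical first-countable complete $G$ exists only when $\cB$ has countably many isomorphism classes; in the general case one needs an additional device (for instance, exhausting $\cB$ by countably generated pre-Galois subcategories and assembling the resulting oligomorphic groups into an admissible inverse limit), which your sketch does not supply. You would also need to check that axiom (f) gives amalgamation in the pointed (co-slice) sense needed for the recursion: $X\times_Z Y$ is only guaranteed non-empty and is a finite coproduct of atoms, and one must choose an atom of it compatibly with the previously chosen structure maps.

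The comparison-functor half also rests on steps that are named rather than proved. For fullness, a $U_X$-fixed point $y\in F(Y)$ must be shown to descend to a morphism $X\to Y$ in $\cB$: $y$ factors through some $X_i$ of the system, and one has to show the resulting map $X_i\to Y$ coequalizes an equivalence relation whose effective quotient (axiom (h)) is $X$; this descent uses homogeneity again and is not mere bookkeeping. For essential surjectivity you need $F$ to send the effective quotient $X\to X'$ of $R$ to a \emph{surjection} $F(X)\to F(X')$, i.e.\ every map $\widehat{\Omega}\to X'$ lifts along $X\to X'$; since $\Hom(\widehat{\Omega},-)$ preserves finite limits but not coequalizers for free, this lifting must be extracted from (f) and the genericity of $\widehat{\Omega}$. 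None of this is fatal to the strategy --- it is the content of the cited paper --- but as written your argument proves the theorem only modulo its hardest steps.
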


As explained in \cite[\S 1.4]{bcat}, the group $G$ is not unique. However, this tends not to be so important since the properties of $G$ we care about only depend on $\bS(G)$. If $\cB$ has countably many isomorphism classes (which is typically the case in applications) then there is a unique $G$ (up to isomorphism) that is first-countable and complete.

\subsection{Measures and integration} \label{ss:meas}

We now recall some background on measures, in the sense of \cite{repst}. Let $G$ be an admissible group. A \defn{$\hat{G}$-set} is a set equipped with a smooth action of some open subgroup of $G$; shrinking the open subgroup does not change the $\hat{G}$-set. See \cite[\S 2.5]{repst} for details.

\begin{definition} \label{defn:meas}
Let $G$ be an admissible group. A \defn{measure} for $G$ valued in a commutative ring $k$ is a rule that assigns to each finitary $\hat{G}$-set $X$ a quantity $\mu(X)$ in $k$ such that the following conditions hold (in which $X$ and $Y$ denote finitary $\hat{G}$-sets):
\begin{enumerate}
\item Isomorphism invariance: if $X \cong Y$ then $\mu(X)=\mu(Y)$.
\item Normalization: if $X=\bone$ is a one-point set then $\mu(X)=1$.
\item Additivity: $\mu(X \amalg Y)=\mu(X)+\mu(Y)$.
\item Conjugation invariance: $\mu(X^g)=\mu(X)$, where $X^g$ is the conjugate of $X$ by $g \in G$.
\item Multiplicativity: if $\pi \colon X \to Y$ is a map of transitive $U$-sets, for some open subgroup $U$, and $F=\pi^{-1}(y)$ is the fiber over some point, then $\mu(X)=\mu(F) \cdot \mu(Y)$. \qedhere
\end{enumerate}
\end{definition}

Suppose $\mu$ is a measure and $f \colon X \to Y$ is a map of $G$-sets, with $X$ finitary and $Y$ transitive. Then all fibers of $f$ are conjugate, and thus have the same measure. We define $\mu(f)$ to be this common value. One can recover $\mu$ from the $\mu(f)$'s (for all $f$), and exactly specify the relations these values must satisfy. In other words, one can define a measure as a rule $f \mapsto \mu(f)$ satisfying certain conditions; see \cite[\S 4.5]{repst} for details. In fact, this formulation of measure makes sense for an arbitrary $\rB$-category.

If $\mu$ is a measure for $G$ and $U$ is an open subgroup of $G$ then $\mu$ restricts to a measure for $U$. The measure $\mu$ is called \defn{regular} if $\mu(X)$ is a unit of $k$ for every transitive $G$-set $X$. The restriction of a regular measure to an open subgroup is still regular. The measure $\mu$ is called \defn{quasi-regular} if it restricts to a regular measure on some open subgroup.

Suppose $\mu$ is a $k$-valued measure for $G$. We then obtain a theory of integration, as follows. Let $X$ be a $G$-set. We say that a function $\phi \colon X \to k$ is \defn{Schwartz} if it is smooth (i.e., it is left-invariant under an open subgroup of $G$) and of finitary support. In this case, $\phi$ assumes finitely many values, say $a_1, \ldots, a_r$, and we define
\begin{displaymath}
\int_X \phi(x) dx = \sum_{i=1}^r a_i \mu(\phi^{-1}(a_i)).
\end{displaymath}
See \cite[\S 3.3]{repst} for details. Let $\cC(X)$ be the space of Schwartz functions. If $f \colon X \to Y$ is a map of $G$-sets then there is a push-forward map $f_* \colon \cC(X) \to \cC(Y)$ defined by integrating over the fibers. This has the expected properties; see \cite[\S 3.4]{repst}. The measure $\mu$ is called \defn{normal} if $f_*$ is surjective whenever $f$ is surjective. A quasi-regular measure is normal.

\subsection{Representation categories} \label{ss:perm}

Let $G$ be an admissible group with a $k$-valued measure $\mu$. Given finitary $G$-sets $X$ and $Y$, a \defn{$Y \times X$ matrix} is a Schwartz function $A \colon X \times Y \to k$. We let $\Mat_{Y,X}$ be the space of such matrices. If $A$ is a $Y \times X$ matrix and $B$ is a $Z \times Y$ matrix, we define $BA$ to be the $Z \times X$ matrix given by
\begin{displaymath}
(BA)(z,x) = \int_Y B(z,y) A(y,x) dy.
\end{displaymath}
Matrix multiplication has the expected properties, and one can develop more of linear algebra in this context; see \cite[\S 7]{repst}.

A $Y \times X$ matrix $A$ defines an integral operator $A \colon \cC(X) \to \cC(Y)$. Suppose $f \colon X \to Y$ is a smooth function. We let $A_f$ and $B_f$ be the $Y \times X$ and $X \times Y$ matrices given by the indicator function of the graph of $f$ (or its transpose). As operators on Schwartz space, we have $A_f=f_*$ and $B_f=f^*$.

We now define a $k$-linear tensor category $\uPerm(G; \mu)$, as follows:
\begin{itemize}
\item The objects are formal symbols $\Vec_X$, with $X$ a finitary $G$-set.
\item $\Hom(\Vec_X, \Vec_Y)=\Mat_{Y,X}^G$ is the space of $G$-invariant matrices.
\item Composition is given by matrix multiplication.
\item The tensor product $\uotimes$ is defined on object by $\Vec_X \uotimes \Vec_Y = \Vec_{X \times Y}$.
\end{itemize}
See \cite[\S 8]{repst} for details. The category $\uPerm(G; \mu)$ is rigid: in fact, every object is self-dual \cite[\S 8.4]{repst}. However, it is almost never abelian. One should think of $\uPerm(G; \mu)$ as a category of ``permutation modules'' for $G$.

We now recall the definition of the abelian category $\uRep(G; \mu)$. The details of this construction will not be important, except in the proof of Lemma~\ref{lem:strong-3}, so we keep this discussion brief. We assume $G$ is first-countable for simplicity. Define the \defn{completed group algebra} $A=A(G)$ to be the inverse limit of Schwartz spaces $\cC(G/U)$ over open subgroups $U$. This becomes a ring under convolution; see \cite[\S 10.3]{repst}. We say that an $A$-module is \defn{smooth} if for each element the action of $A$ factors through some $\cC(G/U)$. We define $\uRep(G; \mu)$ to be the full subcategory of $\Mod_A$ spanned by the smooth modules. This is always a Grothendieck abelian category.

Suppose that $\mu$ is normal. Then we define a tensor structure on $\uRep(G; \mu)$ in \cite[\S 12]{repst}, and there is a natural fully faithful tensor functor $\uPerm(G; \mu) \to \uRep(G; \mu)$ \cite[Props.~11.12,~12.10]{repst}. If $\mu$ is quasi-regular and satisfies an additional technical condition called property~(P) then we show that every smooth $A$-module is the union of its finite length submodules, and that the finite length category $\uRep^{\rf}(G; \mu)$ is pre-Tannakian \cite[Theorem~13.2]{repst}.

\begin{remark} \label{rmk:rep-Ghat}
Suppose $\mu$ is normal and we know that $\uRep^{\rf}(G; \mu)$ is pre-Tannakian. Define $\uRep^{\rf}(\hat{G}; \mu)$ to be the 2-colimit of the categories $\uRep^{\rf}(U; \mu)$ over open subgroups $U$ of $G$. This is an interesting example of a rigid abelian tensor category that is not pre-Tannakian: typically, the $\Hom$ spaces are not finite dimensional, and objects are not of finite length.
\end{remark}

\section{\'Etale algebras in general tensor categories} \label{s:etale}

\subsection{Trace}

Fix a tensor category $\cC$ throughout \S \ref{ss:etale}. A \defn{rigid algebra} is an algebra whose underlying object is rigid. Let $A$ be a rigid algebra. We define the \defn{trace form} $\epsilon \colon A \to \bbone$ to be the composition
\begin{displaymath}
\xymatrix@C=3em{
A \ar[r]^-{\id \otimes \alpha_0} & A \otimes A \otimes A^{\vee} \ar[r]^-{\mu \otimes \id} & A \otimes A^{\vee} \ar[r]^-{\beta_0} & \bbone }
\end{displaymath}
where $\alpha_0$ and $\beta_0$ are (co-)evaluation maps. One can compute $\epsilon$ with respect to any dual of $A$, since duals are unique up to unique isomorphism. We define the \defn{trace pairing} $\beta \colon A \otimes A \to \bbone$ by $\beta = \epsilon \circ \mu$. We note that the trace pairing satisfies $\beta(xy,z)=\beta(x,yz)$.

\begin{proposition} \label{prop:trace-sum}
Let $A$ and $B$ be rigid algebras.
\begin{enumerate}
\item The algebras $A \oplus B$ and $A \otimes B$ are rigid.
\item We have $\epsilon_{A \oplus B}=\epsilon_A \oplus \epsilon_B$ and $\epsilon_{A \otimes B}=\epsilon_A \otimes \epsilon_B$.
\item The pairing $\beta_{A \oplus B}$ is the orthogonal direct sum of $\beta_A$ and $\beta_B$.
\item The pairing $\beta_{A \otimes B}$ is the tensor product of the pairings $\beta_A$ and $\beta_B$.
\end{enumerate}
\end{proposition}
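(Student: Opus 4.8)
The plan is to prove each part by reducing to the definitions of the trace form and trace pairing, exploiting the functoriality of duals under $\oplus$ and $\otimes$. First I would establish part (a): if $A^\vee$ and $B^\vee$ are duals of $A$ and $B$ with their (co-)evaluations, then $A^\vee \oplus B^\vee$ is a dual of $A \oplus B$ — with co-evaluation $\bbone \to (A \oplus B) \otimes (A^\vee \oplus B^\vee)$ obtained from $\alpha_A$ and $\alpha_B$ by using that the cross terms can be taken to be zero (concretely, $(A \oplus B) \otimes (A^\vee \oplus B^\vee) \cong (A \otimes A^\vee) \oplus (A \otimes B^\vee) \oplus (B \otimes A^\vee) \oplus (B \otimes B^\vee)$, and we land in the two diagonal summands), and similarly for the evaluation; the triangle identities then follow summand-by-summand from those for $A$ and $B$. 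For $A \otimes B$, the dual is $A^\vee \otimes B^\vee$ with co-evaluation the composite $\bbone \to A \otimes A^\vee \to A \otimes (B \otimes B^\vee) \otimes A^\vee \cong (A \otimes B) \otimes (A^\vee \otimes B^\vee)$ (up to the symmetry swapping the middle factors), and evaluation built analogously; this is the standard fact that a tensor product of rigid objects is rigid, and the triangle identities are a routine diagram chase using the symmetric monoidal coherence.

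Next, parts (b)–(d) all follow by unwinding the definition of $\epsilon$ as the composite $A \xrightarrow{\id \otimes \alpha_0} A \otimes A \otimes A^\vee \xrightarrow{\mu \otimes \id} A \otimes A^\vee \xrightarrow{\beta_0} \bbone$, using the specific (co-)evaluation maps constructed in part (a), and the fact (noted in the excerpt) that $\epsilon$ may be computed with respect to \emph{any} dual. For $\epsilon_{A \oplus B}$: the multiplication on $A \oplus B$ is $\mu_A \oplus \mu_B$ (it kills cross terms), and with the diagonal (co-)evaluation maps the whole composite decomposes as the direct sum of the composite for $A$ and the composite for $B$, giving $\epsilon_{A \oplus B} = \epsilon_A \oplus \epsilon_B$. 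For $\epsilon_{A \otimes B}$: the multiplication on $A \otimes B$ is $(\mu_A \otimes \mu_B)$ precomposed with the symmetry reordering the four tensor factors, and the (co-)evaluation maps from part (a) are likewise built from those of $A$ and $B$ using the symmetry; tracking these through, and using the $k$-bilinearity and symmetric monoidal coherence, the composite becomes $\epsilon_A \otimes \epsilon_B$. Parts (c) and (d) are then immediate: $\beta = \epsilon \circ \mu$ by definition, and $\mu_{A \oplus B} = \mu_A \oplus \mu_B$ while $\mu_{A \otimes B} = (\mu_A \otimes \mu_B) \circ (\text{symmetry})$, so composing with (b) gives $\beta_{A \oplus B} = \beta_A \oplus \beta_B$ (an orthogonal sum precisely because $\epsilon_{A\oplus B}$ annihilates the cross terms $A \otimes B$ and $B \otimes A$ inside $(A \oplus B)^{\otimes 2}$) and $\beta_{A \otimes B} = \beta_A \otimes \beta_B$.

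The only real subtlety — and the step I would be most careful about — is bookkeeping the symmetry isomorphisms in the $A \otimes B$ case: one must check that the co-evaluation/evaluation maps chosen for $A \otimes B$ are compatible (up to the coherence isomorphisms) with the ones used to define $\mu_{A \otimes B}$, so that the swaps introduced cancel correctly and no stray symmetry survives in the final formula for $\epsilon_{A \otimes B}$. This is where the elemental ($T$-point) notation from \S \ref{s:tensor} is convenient: writing $\beta_A(a, a') = \epsilon_A(aa')$ and $\beta_B(b,b') = \epsilon_B(bb')$, the identity $\beta_{A\otimes B}(a \otimes b, a' \otimes b') = \beta_A(a,a') \beta_B(b,b')$ becomes a short computation, and the orthogonality statement in (c) is the observation that $\beta_{A \oplus B}$ vanishes on points of the form $(a, b')$ with $a$ a point of $A$ and $b'$ a point of $B$. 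Everything else is a direct unwinding with no genuine obstacle.
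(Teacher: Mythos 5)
Your proposal is correct and follows the same route as the paper, which simply notes that (a) is the standard fact that sums and tensor products of rigid objects are rigid and that (b)--(d) follow by unwinding the definitions with the induced (co-)evaluation maps. Your elaboration — computing $\epsilon$ with respect to the explicit duals $A^{\vee}\oplus B^{\vee}$ and $A^{\vee}\otimes B^{\vee}$, and noting the cross terms vanish for the orthogonality in (c) — is exactly the intended verification.
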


\begin{proof}
The first statement follows from the general fact that a sum or tensor product of rigid objects is rigid. The remaining statements follow easily from the definitions.
\end{proof}

\subsection{\'Etale algebras} \label{ss:etale}

An algebra $A$ in $\cC$ is \defn{\'etale}\footnote{We only intend this definition to be used in ``finite'' situations, and it then gives a reasonable notion of ``finite \'etale.'' If $\cC$ is the category of all modules over a commutative ring $R$, our definition does not coincide with the usual definition of \'etale $R$-algebra. For example, $\bZ[1/2]$ is not an \'etale $\bZ$-algebra in our sense.} if it is rigid and its trace pairing $\beta$ is perfect. We write $\Et(\cC)$ for the full subcategory of $\Alg(\cC)$ spanned by \'etale algebras. By Proposition~\ref{prop:trace-sum}, the sum or tensor product of \'etale algebras is \'etale. These operations define the product and co-product on $\Et(\cC)$. The algebras 0 and $\bbone$ are \'etale, and are the final and initial objects of $\Et(\cC)$.

An \'etale algebra is identified with its own dual via the trace pairing. If $A$ and $B$ are \'etale algebras and $f \colon A \to B$ is any morphism in $\cC$ (not necessarily an algebra homomorphism), we let $f^{\vee} \colon B \to A$ be the dual of $f$ taken with respect to the self-dualities of $A$ and $B$.

\begin{proposition} \label{prop:eta-dual}
For an \'etale algebra $A$, we have $\epsilon=\eta^{\vee}$.
\end{proposition}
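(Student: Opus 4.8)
The plan is to unwind both sides of the asserted identity using the definition of the dual of a morphism. Since $A$ is \'etale, the trace pairing $\beta = \epsilon \circ \mu$ is perfect, so there is a co-evaluation $\alpha \colon \bbone \to A \otimes A$ such that $(\alpha, \beta)$ exhibits $A$ as its own dual; this is the self-duality of $A$ used to form $\eta^{\vee}$, and $\bbone$ is self-dual via the canonical unit isomorphism. Feeding $\eta \colon \bbone \to A$ into the standard formula for the dual of a morphism with respect to these dualities, one obtains
\[
\eta^{\vee} = \beta \circ (\id_A \otimes \eta) \colon A \cong A \otimes \bbone \to A \otimes A \to \bbone,
\]
where the first isomorphism is the right unit constraint, here suppressed.

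Next I would substitute $\beta = \epsilon \circ \mu$ and use the unit axiom for the algebra $A$: the composite $\mu \circ (\id_A \otimes \eta) \colon A \otimes \bbone \to A \otimes A \to A$ is precisely the right unit isomorphism $A \otimes \bbone \cong A$. Hence $\eta^{\vee} = \epsilon \circ \mu \circ (\id_A \otimes \eta) = \epsilon$. In the elemental notation introduced earlier this is the one-line computation $\eta^{\vee}(x) = \beta(x, 1) = \epsilon(x \cdot 1) = \epsilon(x)$, where $1$ denotes the unit point $\eta$ and $x$ the generic point $\id_A$ of $A$, combined with the observation that a morphism out of $A$ is determined by its value on the generic point.

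The only delicate point is the coherence bookkeeping: one must identify exactly which (co-)evaluation maps and which unit/associativity constraints occur in the expansion of $\eta^{\vee}$, so that its cancellation against the definition of $\epsilon$ is a literal equality of morphisms rather than an equality up to canonical isomorphism. This is routine — the identity is essentially just a restatement of the unit axiom under the trace self-duality — but it is the one step that requires care; everything else is direct substitution.
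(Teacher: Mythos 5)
Your proof is correct and is essentially the paper's argument: the paper verifies the same identity by checking that $\eta$ and $\epsilon$ are adjoint for the trace pairings, i.e.\ $\beta_A(\eta(x),y)=\beta_{\bbone}(x,\epsilon(y))$, which is exactly your one-line computation $\eta^{\vee}(x)=\beta(x,1)=\epsilon(x)$ phrased via the unit axiom. The coherence bookkeeping you flag is indeed routine and poses no problem.
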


\begin{proof}
We have
\begin{displaymath}
\beta_A(\eta_A(x) \otimes y) = \eta_A(x) \cdot \epsilon(y) = \beta_{\bbone}(x \otimes \epsilon_A(y)),
\end{displaymath}
which shows that $\eta_A$ and $\epsilon_A$ are adjoint.
\end{proof}

The next proposition shows that \'etale algebras are stable under base change, in a general sense.

\begin{proposition} \label{prop:etale-functor}
Let $\Phi \colon \cC \to \cC'$ be a tensor functor. If $A$ is an \'etale algebra of $\cC$ then $\Phi(A)$ is an \'etale algebra of $\cC'$. Thus $\Phi$ induces a functor $\Et(\cC) \to \Et(\cC')$.
\end{proposition}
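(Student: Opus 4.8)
The plan is to show, purely formally, that every ingredient in the definition of ``\'etale''—being an algebra object, being rigid, and having perfect trace pairing—is transported along $\Phi$, using only that $\Phi$ is $k$-linear and (strong) symmetric monoidal. Throughout I would suppress the coherence isomorphisms $\Phi(X) \otimes \Phi(Y) \xrightarrow{\sim} \Phi(X \otimes Y)$ and $\bbone \xrightarrow{\sim} \Phi(\bbone)$, since once these are inserted all the diagrams below commute; alternatively one may replace $\Phi$ by a strict monoidal functor at the outset.

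First I would recall that $\Phi(A)$ is naturally an algebra, with multiplication $\Phi(\mu_A)$ and unit $\Phi(\eta_A)$: associativity, unitality, and commutativity are equalities of composites built from $\otimes$, the associator, the unitors, and the symmetry, all of which $\Phi$ preserves. This is the standard fact that a symmetric monoidal functor induces a functor $\Alg(\cC) \to \Alg(\cC')$, and the claimed functor $\Et(\cC) \to \Et(\cC')$ will be its restriction. Next I would check that $\Phi$ preserves rigidity: if $(A^{\vee}, \alpha_0, \beta_0)$ is a dual datum for $A$, then $\Phi(\alpha_0)$ and $\Phi(\beta_0)$ exhibit $\Phi(A^{\vee})$ as a dual of $\Phi(A)$, because the triangle identities are once more equalities of composites of structure that $\Phi$ respects. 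Hence $\Phi(A)$ is rigid, with $\Phi(A)^{\vee} = \Phi(A^{\vee})$.

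It then remains to see that the trace pairing of $\Phi(A)$ is perfect. Since $\epsilon_A$ is defined as the composite $A \xrightarrow{\id \otimes \alpha_0} A \otimes A \otimes A^{\vee} \xrightarrow{\mu_A \otimes \id} A \otimes A^{\vee} \xrightarrow{\beta_0} \bbone$, applying $\Phi$ produces precisely the analogous composite computing the trace form of the algebra $\Phi(A)$ with respect to the dual datum $\Phi(A^{\vee})$; as the trace form is independent of the chosen dual, $\epsilon_{\Phi(A)} = \Phi(\epsilon_A)$, and therefore $\beta_{\Phi(A)} = \epsilon_{\Phi(A)} \circ \mu_{\Phi(A)} = \Phi(\beta_A)$. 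Now since $\beta_A$ is perfect there is a morphism $\alpha \colon \bbone \to A \otimes A$ such that $(\alpha, \beta_A)$ is a co-evaluation/evaluation pair for a self-duality of $A$; then $\Phi(\alpha) \colon \bbone \to \Phi(A) \otimes \Phi(A)$ together with $\beta_{\Phi(A)} = \Phi(\beta_A)$ satisfies the triangle identities, so $\beta_{\Phi(A)}$ is perfect and $\Phi(A)$ is \'etale. I expect the only genuine work to be the bookkeeping of coherence isomorphisms in the first two steps, which is routine; everything else is immediate once $\Phi$ is known to commute with $\otimes$ and $\bbone$.
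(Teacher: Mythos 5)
Your proposal is correct and follows essentially the same route as the paper: preserve rigidity, observe $\Phi(\epsilon_A)=\epsilon_{\Phi(A)}$ (hence $\Phi(\beta_A)=\beta_{\Phi(A)}$), and conclude perfection of the trace pairing because tensor functors preserve duality data. You merely spell out the coherence bookkeeping and the independence of the trace form from the chosen dual, which the paper leaves implicit.
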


\begin{proof}
Since tensor functors preserve rigid objects, $\Phi(A)$ is a rigid algebra. It follows directly from the definitions that $\Phi(\epsilon_A)=\epsilon_{\Phi(A)}$. Therefore, $\Phi(\beta_A)=\beta_{\Phi(A)}$. Since $\beta_A$ is a perfect pairing and tensor functors preserve perfect pairings, it follows that $\beta_{\Phi(A)}$ is perfect. Thus $\Phi(A)$ is \'etale.
\end{proof}

\begin{remark}
We note that \'etale algebras in tensor categories have been previously studied; see, e.g., \cite{Davydov, DMNO, FFRS, LaugwitzWalton}.
\end{remark}

\subsection{Splittings}

Let $A$ be an algebra in $\cC$. A \defn{$\mu$-splitting} of $A$ is an $(A \otimes A)$-linear map $\delta \colon A \to A \otimes A$ such that $\mu \circ \delta = \id_A$. Here we regard $A$ as an $(A \otimes A)$-module via $\mu$. We say that $A$ is \defn{$\mu$-split} if it admits a $\mu$-splitting\footnote{Such algebras are sometimes called \defn{separable}.}. A \defn{splitting idempotent} of $A$ is an idempotent $\alpha \in \Gamma(A \otimes A)$ that satisfies $\mu(\alpha)=1$ and $(x \otimes 1)\alpha=(1 \otimes x)\alpha$ for any point $x$ of $A$. These two notions are equivalent:

\begin{proposition}
Let $A$ be an algebra in $\cC$. If $\delta$ is a $\mu$-splitting then $\delta(1)$ is a splitting idempotent. If $\alpha$ is a splitting idempotent then the map $\delta$ defined by $\delta(x)=(x \otimes 1) \alpha$ is a $\mu$-splitting. These two constructions are mutually inverse.
\end{proposition}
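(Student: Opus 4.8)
The plan is to carry out everything in the functor-of-points notation of \S\,2.3, translating the phrase ``$(A \otimes A)$-linear'' and the identities $\mu \circ \delta = \id$, $\mu(\alpha) = 1$, $(x \otimes 1)\alpha = (1 \otimes x)\alpha$ into manipulations of points of $A$ and $A \otimes A$. The organizing remark is that $A$, with its $(A \otimes A)$-module structure via $\mu$, is \emph{cyclic}, generated by the point $1 = \eta \colon \bbone \to A$: indeed $\mu \colon A \otimes A \to A$ is the quotient map realizing it, and $\mu$ is a split epimorphism via the section $\id_A \otimes \eta$. Hence an $(A \otimes A)$-linear map out of $A$ is pinned down by the image of the generator, which is why one expects $\delta \leftrightarrow \delta(1) = \delta \circ \eta$ to be the claimed bijection. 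Two conventions I will use repeatedly: an equality of morphisms out of a tensor power of $A$ may be checked on tuples of points (take every point to be $\id_A$ for the universal case); and one must distinguish the action of a point of $A \otimes A$ on a point of $A$, which factors through $\mu$, from the product of two points in the algebra $A \otimes A$.

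For the first construction, start with a $\mu$-splitting $\delta$ and put $\alpha = \delta(1) \in \Gamma(A \otimes A)$. Unwinding linearity with the point $a \otimes b$ of $A \otimes A$ acting on $1$ gives $\delta(ab) = (a \otimes b)\alpha$; specializing $(a,b)$ to $(x,1)$ and to $(1,x)$ yields simultaneously the formula $\delta(x) = (x \otimes 1)\alpha$ and the balancing identity $(x \otimes 1)\alpha = (1 \otimes x)\alpha$. Applying the algebra homomorphism $\mu \colon A \otimes A \to A$ to $\alpha = \delta(1)$ and using $\mu \circ \delta = \id$ gives $\mu(\alpha) = 1$. Idempotency is the one mildly clever step: apply linearity to the point $\alpha$ acting on $1$; since that action goes through $\mu$ and $\mu(\alpha) = 1$, one gets $\delta(1) = \alpha \cdot \delta(1)$ in $\Gamma(A \otimes A)$, i.e.\ $\alpha = \alpha^2$. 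Thus $\alpha$ is a splitting idempotent.

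For the second construction, start with a splitting idempotent $\alpha$ and define $\delta$ as the composite of $\id_A \otimes \eta \colon A \to A \otimes A$ with multiplication by $\alpha$ in $A \otimes A$, so $\delta(x) = (x \otimes 1)\alpha$; this is manifestly a morphism of $\cC$. Then $\mu(\delta(x)) = \mu(x \otimes 1)\,\mu(\alpha) = x \cdot 1 = x$, using that $\mu$ is an algebra homomorphism and $\mu(\alpha) = 1$, so $\mu \circ \delta = \id$. For $(A \otimes A)$-linearity I check $\delta\big((a \otimes b) \cdot c\big) = (a \otimes b)\cdot\delta(c)$ on points $a,b,c$ of $A$: the left side is $(abc \otimes 1)\alpha$ and the right side is $(ac \otimes b)\alpha$, and these coincide by multiplying the balancing identity $(b \otimes 1)\alpha = (1 \otimes b)\alpha$ on the left by $(ac \otimes 1)$ and invoking commutativity of $A$. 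Hence $\delta$ is a $\mu$-splitting. Finally, mutual inverseness: from $\alpha$ we get $\delta(1) = (1 \otimes 1)\alpha = \alpha$ since $1 \otimes 1$ is the unit of $A \otimes A$; and from $\delta$ we get $\alpha = \delta(1)$ and $\delta'(x) = (x \otimes 1)\alpha$, which equals $\delta(x)$ by the formula already extracted from linearity in the first paragraph, so $\delta' = \delta$ after setting $x = \id_A$.

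I do not anticipate a real obstacle here: the argument is bookkeeping once the cyclic-module viewpoint is fixed. The only non-formal point is deriving $\alpha = \alpha^2$ from linearity together with $\mu(\alpha) = 1$, and the only thing demanding care throughout is keeping the two ``$\cdot$'' operations (module action through $\mu$ versus multiplication in the algebra $A \otimes A$) and the source objects of the intermediate points straight.
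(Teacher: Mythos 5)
Your proof is correct and follows essentially the same route as the paper: the same elemental manipulations establish $(x\otimes 1)\alpha=\delta(x)=(1\otimes x)\alpha$, $\mu(\alpha)=1$, idempotency via $(A\otimes A)$-linearity (your step with $\alpha$ acting on $1$ is just the paper's computation $\delta(x)\cdot\delta(y)=\delta(xy)$ specialized at $x=y=1$), and the same converse verification that $x\mapsto(x\otimes 1)\alpha$ is a $\mu$-splitting. The cyclic-generator framing is a nice organizing remark but does not change the argument.
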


\begin{proof}
Let $\delta$ be a splitting idempotent, and put $\alpha=\delta(1)$. Since $\mu \circ \delta=\id_A$, we have $\mu(\alpha)=1$. We have
\begin{displaymath}
\delta(x) \cdot \delta(y) = \delta(\mu(\delta(x)) \cdot y) = \delta(xy),
\end{displaymath}
where in the first step we used that $\delta$ is $(A \otimes A)$-linear, and in the second that $\mu \circ \delta=\id_A$. Taking $x=y=1$, we find $\alpha^2=\alpha$, and so $\alpha$ is an idempotent. We have
\begin{displaymath}
(x \otimes 1) \alpha = (x \otimes 1) \delta(1) = \delta(x) = (1 \otimes x) \delta(1) = (1 \otimes x) \alpha.
\end{displaymath}
Thus $\alpha$ is a splitting idempotent. Also, $\delta$ can be recovered from $\alpha$ by the construction in the proposition statement.

Now suppose that $\alpha$ is a splitting idempotent, and define $\delta$ by $\delta(x)=(x \otimes 1) \alpha$. We have
\begin{displaymath}
(x \otimes y)\delta(z) = (x \otimes y)(z \otimes 1) \alpha = (xyz \otimes 1 \alpha) = \delta(xyz),
\end{displaymath}
where in the second step we used $(1 \otimes y)\alpha=(y \otimes 1) \alpha$. This shows that $\delta$ is $(A \otimes A)$-linear. We have $\mu(\delta(x))=x \mu(\alpha)=1$, and so $\delta$ is a splitting idempotent. It is clear that $\alpha=\delta(1)$, and so $\alpha$ can be recovered from $\delta$ by the construction in the proposition statement.
\end{proof}

The following proposition shows the relevance of these concepts to \'etale algebras.

\begin{proposition} \label{prop:mu-split-etale}
A rigid algebra is \'etale if and only if it is $\mu$-split. In this case, the $\mu$-splitting $\delta$ is dual to $\mu$ (and therefore unique), and satisfies $(\epsilon \otimes 1) \delta=\id$.
\end{proposition}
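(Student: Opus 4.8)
The plan is to work through the correspondence, set up in the preceding proposition, between $\mu$-splittings $\delta$ and splitting idempotents $\alpha=\delta(1)$, linked by $\delta(x)=(x\otimes 1)\alpha$. I would use the functor-of-points calculus of \S\ref{s:tensor}, writing $\alpha=\sum_i e_i\otimes f_i$ as formal shorthand, and use throughout that $\beta_A=\epsilon\circ\mu$ is symmetric (as $A$ is commutative) and that each multiplication operator $m_x=\mu\circ(x\otimes\id_A)\colon A\to A$ is self-adjoint for $\beta_A$, since $\beta_A(xa,b)=\epsilon(xab)=\beta_A(a,xb)$.

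For the direction ``\'etale $\Rightarrow$ $\mu$-split'' I would take the co-evaluation $\gamma_A\colon\bbone\to A\otimes A$ of the perfect pairing $\beta_A$ and set $\alpha=\gamma_A(1)$. First I would check $\alpha$ is a splitting idempotent. The relation $(x\otimes1)\alpha=(1\otimes x)\alpha$ is the standard identity $(m_x\otimes\id_A)\circ\gamma_A=(\id_A\otimes m_x)\circ\gamma_A$ for a self-adjoint endomorphism of a self-dual object (a one-line check: pairing either side on the right with an arbitrary point gives $m_x$, by the triangle identities for $\gamma_A$ and self-adjointness of $m_x$, so the two points agree by perfectness). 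And $\mu(\alpha)=1$ because $\beta_A(\mu(\alpha),b)$ unwinds to $\beta_A\circ(\id_A\otimes m_b)\circ\gamma_A$, which is the categorical trace of $m_b$ computed with respect to the self-duality $(\gamma_A,\beta_A)$ of $A$; since the categorical trace is independent of the chosen duality, this equals the trace of $m_b$ computed with respect to a rigid dual, i.e.\ $\epsilon(b)$ by definition of the trace form; hence $\beta_A(\mu(\alpha),b)=\epsilon(b)=\beta_A(1,b)$ for all $b$ and perfectness gives $\mu(\alpha)=1$. Then $\delta(x):=(x\otimes1)\alpha$ is a $\mu$-splitting by the preceding proposition. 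That $\delta$ is dual to $\mu$ (with respect to the trace-pairing self-dualities of $A$ and of the \'etale algebra $A\otimes A$) I would verify by the pairing computation $\beta_{A\otimes A}(\delta(x),y\otimes z)=\sum_i\epsilon(xe_iy)\epsilon(f_iz)=\epsilon(xyz)=\beta_A(x,\mu(y\otimes z))$, using that $\beta_{A\otimes A}=\beta_A\otimes\beta_A$ (Proposition~\ref{prop:trace-sum}) and the triangle (reproducing) identity; uniqueness then follows, since every $\mu$-splitting comes from a splitting idempotent and splitting idempotents are unique ($\alpha'\alpha''=(\mu(\alpha')\otimes1)\alpha''=\alpha''$, and symmetrically $=\alpha'$). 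Finally $(\epsilon\otimes\id_A)\delta=\id$ is another instance of the reproducing property: $(\epsilon\otimes\id_A)((x\otimes1)\alpha)=\sum_i\epsilon(xe_i)f_i=x$.

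For the converse, given $A$ rigid and $\mu$-split with splitting idempotent $\alpha$, I would exhibit a co-evaluation for $\beta_A$, namely $\gamma:=\delta\circ\eta$ (so $\gamma(1)=\alpha$). The first triangle identity $(\id_A\otimes\beta_A)(\gamma\otimes\id_A)=\id_A$ says $\sum_i e_i\,\epsilon(f_ia)=a$; using $(1\otimes a)\alpha=(a\otimes1)\alpha$ the left side equals $a\cdot(\id_A\otimes\epsilon)(\alpha)$, so the entire converse reduces to $(\id_A\otimes\epsilon)(\alpha)=1$. For that I would substitute $\epsilon(y)=\sum_j\langle a_j^*,ya_j\rangle$ (the co-evaluation $\sum_j a_j\otimes a_j^*$ of a rigid dual, evaluation $\langle-,-\rangle$, rigidity relation $\sum_j a_j\langle a_j^*,b\rangle=b$), obtaining $(\id_A\otimes\epsilon)(\alpha)=\sum_{i,j}\langle a_j^*,a_jf_i\rangle e_i$, and then rewrite the inner sum over $i$ by means of $\sum_i a_je_i\otimes f_i=(a_j\otimes1)\alpha=(1\otimes a_j)\alpha=\sum_i e_i\otimes a_jf_i$, turning it into $\sum_i\bigl(\sum_j a_j\langle a_j^*,f_i\rangle\bigr)e_i=\sum_i f_ie_i=\mu(\alpha)=1$. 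The second triangle identity follows from the first together with the symmetry of $\alpha$ under the braiding (the braided $\alpha$ is again a splitting idempotent, and those are unique). This proves $\beta_A$ perfect, i.e.\ $A$ \'etale.

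The main obstacle is precisely the identity $(\id_A\otimes\epsilon)(\alpha)=1$ in the converse — equivalently $\mu(\alpha)=1$, proved via the trace computation, in the forward direction. This is the one step that genuinely uses that the pairing is the \emph{trace} pairing (a general Frobenius pairing would fail $\mu\circ\delta=\id$), and it rests on the interplay of the splitting-idempotent symmetry $(x\otimes1)\alpha=(1\otimes x)\alpha$, the rigidity dual basis, and the formula $\epsilon=\mathrm{Tr}(m_{(-)})$. Everything else is routine bookkeeping with dual bases, with the passages from ``equal against all points'' to ``equal'' justified as in \S\ref{s:tensor}.
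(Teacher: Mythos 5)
Your proposal is correct and takes essentially the same route as the paper: both directions rest on the splitting-idempotent correspondence, the exchange property $(x\otimes 1)\alpha=(1\otimes x)\alpha$, the key identity $\mu(\alpha)=1$ obtained by comparing the trace form computed via the self-duality $(\gamma_A,\beta_A)$ with $\beta_A(-,1)$, and the $\beta_{A\otimes A}$ pairing computation identifying $\delta$ with $\mu^{\vee}$. The differences are only cosmetic: you package the crucial step as independence of the categorical trace from the chosen duality datum where the paper says ``$\epsilon$ can be computed with respect to any dual'' and then rearranges by associativity, you work in dual-basis rather than composition-of-morphisms notation, and you obtain uniqueness of $\delta$ directly from uniqueness of splitting idempotents instead of from $\delta=\mu^{\vee}$.
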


\begin{proof}
Let $A$ be a rigid $\mu$-split algebra. Let $\delta$ be a splitting and $\alpha=\delta(1)$ the associated idempotent. Consider $(\epsilon \otimes \id) \alpha$. This is equal to the composition
\begin{displaymath}
\xymatrix@C=3em{
\bbone \ar[r]^-{\alpha} & A \otimes A \ar[r]^-{\id \otimes \alpha_0} & A \otimes A \otimes A \otimes A^{\vee} \ar[r]^-{\id \otimes \mu \otimes \id} \ar[r] & A \otimes A \otimes A^{\vee} \ar[r]^-{\id \otimes \beta_0} & A }
\end{displaymath}
where $\alpha_0$ and $\beta_0$ are the (co-)evaluation maps. Since $\alpha$ is a splitting idempotent, we can change the third map to $\mu \otimes \id \otimes \id$. Since $\mu(\alpha)=1$, it follows that the above composition is also 1, i.e., $(\epsilon \otimes \id) \alpha=\eta$. We now have
\begin{displaymath}
(\epsilon \otimes \id) \delta(x) = (\epsilon \otimes \id) (1 \otimes x) \alpha = x (\epsilon \otimes \id) \alpha = x,
\end{displaymath}
that is, $(\epsilon \otimes \id) \delta = \id$. Therefore,
\begin{displaymath}
(\beta \otimes \id)(\id \otimes \alpha)(x) = (\epsilon \otimes 1)(\delta(x)) = x,
\end{displaymath}
which shows that $\beta$ and $\alpha$ define a self-duality of $A$, i.e., $A$ is \'etale. We have
\begin{displaymath}
\beta_{A \otimes A}(x \otimes y, \delta(z)) = (\epsilon \otimes \epsilon)((x \otimes y) \cdot \delta(z)) = (\epsilon \otimes \epsilon)(\delta(xyz))=\epsilon(xyz)=\beta_A(\mu(x \otimes y), z).
\end{displaymath}
In the third step, we factored $\epsilon \otimes \epsilon$ as $(\epsilon \otimes \id)(\id \otimes \epsilon)$ and used that $(\id \otimes \epsilon) \delta=\id$. This shows that $\delta$ and $\mu$ are adjoint, and so $\delta=\mu^{\vee}$. This shows that $\delta$ is unique.

Suppose now that $A$ is an \'etale algebra, and let $\delta=\mu^{\vee}$. Then
\begin{align*}
\beta_{A \otimes A}(x_1 \otimes x_2, (y_1 \otimes y_2) \delta(z)) &=
\beta_{A \otimes A}(x_1y_1 \otimes x_2y_2, \delta(z))
= \beta_A(x_1y_1x_2y_2, z) \\
&= \beta_A(x_1x_2, y_1y_2z)
= \beta_{A \otimes A}(x_1 \otimes x_2, \delta(y_1y_2z)),
\end{align*}
and so $\delta(y_1y_2z)=(y_1 \otimes y_2) \delta(z)$, i.e., $\delta$ is $(A \otimes A)$-linear. Let $\alpha \colon \bbone \to A \otimes A$ be the dual of $\beta$; this is the co-evaluation map in the self-duality of $A$. By definition, $\epsilon$ is the composition
\begin{displaymath}
\xymatrix@C=3em{
A \ar[r]^-{\id \otimes \alpha} & A \otimes A \otimes A \ar[r]^-{\mu \otimes \id} & A \otimes A \ar[r]^-{\beta} & \bbone. }
\end{displaymath}
Since $\beta=\epsilon \circ \mu$ and $\mu$ is associative, we can change $\mu \otimes \id$ to $\id \otimes \mu$ in the second map above. We thus find $\epsilon(x)=\beta(x,\mu(\alpha))$. Since we also have $\epsilon(x)=\beta(x,1)$, it follows that $\mu(\alpha)=1$, i.e., $\mu \circ \alpha=\eta$. Dualizing the equation $\beta=\epsilon \circ \mu$, we find $\alpha=\delta \circ \eta$, i.e., $\alpha=\delta(1)$, and so $\mu(\delta(1))=1$. Hence
\begin{displaymath}
\mu(\delta(x))=\mu((x \otimes 1) \cdot \delta(1))=x \cdot \mu(\delta(1))=x.
\end{displaymath}
Thus $\delta$ is a $\mu$-splitting.
\end{proof}

\subsection{Frobenius algebras}

We have seen that an \'etale algebra has various pieces of extra structure. The following definition helps organize all of this structure.

\begin{definition} \label{defn:frob}
A \defn{Frobenius algebra}\footnote{What we have called a ``Frobenius algebra'' is commonly called a ``special commutative Frobenius algebra.'' We have no need for more general Frobenius algebras.} in $\cC$ is an object $A$ equipped with maps
\begin{displaymath}
\eta \colon \bbone \to A, \quad \mu \colon A \otimes A \to A, \quad \epsilon \colon A \to \bbone, \quad \delta \colon A \to A \otimes A
\end{displaymath}
such that the following conditions hold:
\begin{enumerate}
\item $(A, \mu, \eta)$ is a commutative, associative, unital algebra object.
\item $(A, \delta, \epsilon)$ is a co-commutative, co-associative, co-unital co-algebra object.
\item We have
\begin{displaymath}
(\id \otimes \mu) \circ (\delta \otimes \id) = \delta \circ \mu = (\mu \otimes \id) \circ (\id \otimes \delta)
\end{displaymath}
as maps $A \otimes A \to A \otimes A$.
\item We have $\mu \circ \delta=\id_A$. \qedhere
\end{enumerate}
\end{definition}

The following is the prototypical example of a Frobenius algebra.

\begin{example} \label{ex:frob}
Let $X$ be a finite set, and let $A=k[X]$ be the vector space with basis $\{e_x\}_{x \in X}$. Then $A$ is a Frobenius algebra (in the category of vector spaces), with
\begin{displaymath}
\eta(1)=\sum_{x \in X} e_x, \qquad \mu(e_x \otimes e_y) = \delta_{x,y} e_x, \qquad \epsilon(e_x)=1, \qquad \delta(e_x)=e_x \otimes e_x,
\end{displaymath}
where the $\delta$ in the second equation is the Kronecker delta. The basis vectors are orthonormal with respect to the trace pairing $\beta$.
\end{example}

\begin{example} \label{ex:Vec-frob}
Let $G$ be an admissible group with a $k$-valued measure $\mu$. Then for any $G$-set $X$, the object $\Vec_X$ is naturally a Frobenius algebra in $\uPerm(G; \mu)$. The various maps are defined analogously to those in Example~\ref{ex:frob}.
\end{example}

\begin{example} \label{ex:Schwartz-frob}
Let $G$ and $\mu$ be as above, and suppose that $\mu$ is normal. For any finitary $G$-set $X$, the Schwartz space $\cC(X)$ is naturally a Frobenius algebra in the tensor category $\uRep(G; \mu)$. This follows from the previous example, since $\cC(X)$ is the image of $\Vec_X$ under the tensor functor $\uPerm(G; \mu) \to \uRep(G; \mu)$. However, it can also be seen directly: for instance, the multiplication on $\cC(X)$ is just the usual pointwise multiplication of functions on $X$.
\end{example}

Our next proposition connects Frobenius and \'etale algebras.

\begin{proposition} \label{prop:et-frob}
We have the following:
\begin{enumerate}
\item Let $A$ be a Frobenius algebra. Then $(A, \mu, \eta)$ is an \'etale algebra. Moreover, $\epsilon$ is the trace for this algebra, and $\delta$ is the $\mu$-splitting.
\item Let $(A, \mu, \eta)$ be an \'etale algebra. Let $\epsilon$ be the trace map on $A$, and let $\delta$ be the $\mu$-splitting for $A$. Then $(A, \mu, \eta, \delta, \epsilon)$ is a Frobenius algebra.
\end{enumerate}
\end{proposition}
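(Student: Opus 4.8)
The plan is to leverage the structural results already established, namely Proposition~\ref{prop:mu-split-etale} (a rigid algebra is \'etale iff it is $\mu$-split, and then $\delta = \mu^\vee$ is the unique $\mu$-splitting, satisfying $(\epsilon \otimes 1)\delta = \id$) together with Proposition~\ref{prop:eta-dual} ($\epsilon = \eta^\vee$ for an \'etale algebra). Most of the axioms in Definition~\ref{defn:frob} will then follow by dualizing the corresponding algebra axioms across the self-duality furnished by the perfect trace pairing $\beta$, so the bulk of the argument is bookkeeping with the functor-of-points notation from \S\ref{s:tensor}.

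For part (b): given an \'etale algebra $(A, \mu, \eta)$, set $\epsilon$ to be the trace and $\delta = \mu^\vee$ the $\mu$-splitting. Axiom~(a) is a hypothesis. For axiom~(b), I would observe that $\delta = \mu^\vee$ and $\epsilon = \eta^\vee$ (Proposition~\ref{prop:eta-dual}), so co-associativity, co-commutativity, and co-unitality of $(A, \delta, \epsilon)$ are the literal duals, with respect to the self-duality of $A^{\otimes n}$, of associativity, commutativity, and unitality of $(A, \mu, \eta)$; since dualizing is a contravariant monoidal involution here, each identity transports directly. Axiom~(d) is exactly $\mu \circ \delta = \id_A$, which is the defining property of a $\mu$-splitting. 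The only axiom requiring genuine computation is~(c), the Frobenius compatibility $(\id \otimes \mu)(\delta \otimes \id) = \delta \circ \mu = (\mu \otimes \id)(\id \otimes \delta)$. I would verify this on points: using that $\delta$ is $(A\otimes A)$-linear (Proposition~\ref{prop:mu-split-etale}), one computes $\delta(xy) = (x \otimes 1)\delta(y)$, and then pairs both sides of the desired equalities against arbitrary points via $\beta_{A \otimes A}$, reducing everything to the identity $\beta(x_1 x_2, yz) = \beta(x_1 x_2 y, z)$ and the associativity of $\mu$. Concretely, $(\id \otimes \mu)(\delta \otimes \id)$ applied to $x \otimes y$ gives $(\id \otimes \mu)(\delta(x) \otimes y)$, which one checks equals $\delta(xy)$ by testing against $\beta_{A\otimes A}$ and using adjointness $\delta = \mu^\vee$; the symmetric identity is handled the same way using co-commutativity, or directly.

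For part (a): given a Frobenius algebra $A$, axiom~(a) of Definition~\ref{defn:frob} says $(A, \mu, \eta)$ is an algebra, and $A$ is rigid because axiom~(c) together with $\mu \circ \delta = \id$ (axiom~(d)) exhibits $(\eta \otimes \id) \circ \delta$-type maps, or more cleanly: I claim $\beta := \epsilon \circ \mu$ and $\alpha := \delta \circ \eta$ form a duality datum. Checking the zig-zag identities, $(\beta \otimes \id)(\id \otimes \alpha) = (\epsilon \otimes \id)(\mu \otimes \id)(\id \otimes \delta)(\id \otimes \eta) = (\epsilon \otimes \id)(\mu \otimes \id)(\id \otimes \delta \eta)$; applying the Frobenius relation (axiom~(c)) $(\mu \otimes \id)(\id \otimes \delta) = \delta \circ \mu$ and then $\mu(\id \otimes \eta) = \id$ and $(\epsilon \otimes \id)\delta = \id$ (the last being a consequence of the Frobenius and co-unit axioms, derived as in Proposition~\ref{prop:mu-split-etale}), this collapses to $\id_A$; the other zig-zag is symmetric. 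So $A$ is rigid, hence a rigid algebra, and $\beta$ is a perfect pairing by construction, so $A$ is \'etale. It remains to identify $\epsilon$ with the trace and $\delta$ with the canonical $\mu$-splitting: the trace of the \'etale algebra is computed with respect to any dual, so using the dual $(\alpha, \beta)$ just constructed, one unwinds the definition of $\epsilon$ in \S\ref{ss:etale} and recovers precisely the given $\epsilon$; and $\delta$ is a $\mu$-splitting by axioms~(c) and~(d), hence equals $\mu^\vee$ by the uniqueness clause of Proposition~\ref{prop:mu-split-etale}.

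The main obstacle is purely notational rather than conceptual: carefully matching the (co-)evaluation maps $\alpha_0, \beta_0$ appearing in the definition of the trace $\epsilon$ in \S\ref{ss:etale} against the $\alpha = \delta\eta$, $\beta = \epsilon\mu$ arising from the Frobenius structure, and being careful that the two potential sources of a ``canonical $\delta$'' --- the Frobenius co-multiplication and the abstract $\mu^\vee$ --- genuinely agree. Once the uniqueness statement in Proposition~\ref{prop:mu-split-etale} is invoked, this identification is forced, so the proof reduces to checking the zig-zag identities and the Frobenius relation, both of which are short point-wise computations.
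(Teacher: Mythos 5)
Your proposal is correct and follows essentially the same route as the paper: for (a) you take $\alpha=\delta\circ\eta$, $\beta=\epsilon\circ\mu$, verify the zig-zag identities from the Frobenius and co-unit axioms, note that axioms (c),(d) say exactly that $\delta$ is a $\mu$-splitting so Proposition~\ref{prop:mu-split-etale} applies, and then unwind the trace form with respect to this self-duality; for (b) you dualize the algebra axioms via $\epsilon=\eta^{\vee}$, $\delta=\mu^{\vee}$ and recover (c),(d) from the $\mu$-splitting property. The only nitpick is the phrase ``$\beta$ is a perfect pairing by construction, so $A$ is \'etale'': \'etaleness requires the \emph{trace} pairing to be perfect, but since you immediately identify $\epsilon$ with the trace (equivalently, you can quote rigid $+$ $\mu$-split $\Rightarrow$ \'etale from Proposition~\ref{prop:mu-split-etale}, as the paper does), this is only a matter of ordering the steps.
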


\begin{proof}
(a) Let $A$ be a Frobenius algebra. Definition~\ref{defn:frob}(c,d) exactly say that $\delta$ is a $\mu$-splitting. Let $\alpha=\delta \circ \eta$ and $\beta=\epsilon \circ \mu$. Then
\begin{displaymath}
(\beta \otimes \id)(\id \otimes \alpha)(x)
= (\epsilon \otimes 1)((x \otimes 1) \cdot \delta(1))
= (\epsilon \otimes 1)(\delta(x)) = x.
\end{displaymath}
In the final step, we use that $\epsilon$ is a co-unit for $\delta$. We thus see that $A$ is self-dual. As $A$ is rigid and $\mu$-split, it is therefore \'etale (Proposition~\ref{prop:mu-split-etale}).

Computing with the above self-duality of $A$, we find that the trace form for $A$ is
\begin{displaymath}
\xymatrix@C=3em{
A \ar[r]^-{\id \otimes \eta} &
A \otimes A \ar[r]^-{\id \otimes \delta} \ar[r] &
A \otimes A \otimes A \ar[r]^-{\mu \otimes \id} &
A \otimes A \ar[r]^-{\mu} &
A \ar[r]^{\epsilon} & \bbone. }
\end{displaymath}
Since $\mu$ is associative, we can change the middle $\mu \otimes \id$ to $\id \otimes \mu$. Since $\mu \circ \delta=\id$ by Definition~\ref{defn:frob}(d), we can then cancel the second and third maps. Since $\mu \circ (\id \otimes \eta) = \id$, we can then cancel the first and fourth maps. We thus find that $\epsilon$ is the trace for $A$.

(b) Since $\epsilon$ and $\delta$ are dual to $\eta$ and $\mu$ (Propositions~\ref{prop:eta-dual} and~\ref{prop:mu-split-etale}), it follows that $(A, \delta, \epsilon)$ is a co-commutative, co-associative, co-unital co-algebra. We have already remarked that the conditions Definition~\ref{defn:frob}(c,d) are equivalent to $\delta$ being a $\mu$-splitting. We thus see that $A$ is a Frobenius algebra.
\end{proof}

Proposition~\ref{prop:et-frob}(b) shows that every \'etale algebra can be promoted to a Frobenius algebra; in fact, there is a unique such promotion, since Proposition~\ref{prop:et-frob}(a) specifies $\epsilon$ and $\delta$ in terms of the algebra structure. We thus see that \'etale and Frobenius algebras are equivalent concepts.

\section{\'Etale algebras in abelian tensor categories} \label{s:etale2}

\subsection{Modules}

Fix a rigid abelian tensor category $\cC$ throughout \S \ref{s:etale2}. Recall that if $A$ is an algebra in $\cC$ then $\Mod_A$ is itself a tensor category, with tensor product $\otimes_A$. In general, $\Mod_A$ will not be rigid; in fact, $\otimes_A$ is typically not even exact. However, we do get rigidity for \'etale algebras:

\begin{proposition} \label{prop:rigid-mod}
If $A$ is an \'etale algebra then $\Mod_A$ is a rigid tensor category.
\end{proposition}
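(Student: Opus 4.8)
The plan is to reduce everything to the fact that an étale algebra $A$ is $\mu$-split, and to use the $\mu$-splitting $\delta \colon A \to A \otimes A$ as the extra structure that rigidifies $\Mod_A$. First I would observe that, although $\otimes_A$ is defined as a coequalizer, the $\mu$-splitting lets us identify $M \otimes_A N$ with a \emph{retract} of $M \otimes N$: the idempotent $e = (\mu_M \otimes \id_N) \circ (\id_M \otimes \delta \otimes \id_N)$ (suitably interpreted via the module structures) is $(A\otimes A)$-linear and its image is $M \otimes_A N$. Concretely, using the splitting idempotent $\alpha = \delta(1) \in \Gamma(A \otimes A)$ with the property $(x \otimes 1)\alpha = (1 \otimes x)\alpha$, one checks that $e$ is a genuine idempotent endomorphism of $M \otimes N$ in $\cC$ whose splitting computes the coequalizer. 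This immediately gives two things at once: $\otimes_A$ is exact on $\Mod_A$ (it is a retract of the exact functor $\otimes$ composed with restriction, and retracts of exact functors between abelian categories are exact), and $M \otimes_A N$ exists and is well-behaved.

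Next I would construct duals. For an $A$-module $M$ whose underlying object is rigid in $\cC$ — which will include all the relevant ones since $\cC$ is rigid — I claim the internal dual in $\Mod_A$ is $M^{\vee_A} := \uHom_A(M, A)$, and that one can build it concretely from $M^\vee$ in $\cC$. The idea: $M^\vee$ (the $\cC$-dual) carries a natural $A$-module structure via the action on $M$ and the self-duality of $A$; then set $M^{\vee_A}$ to be the image of an appropriate idempotent on $M^\vee$ built from $\alpha$, analogous to the construction of $\otimes_A$. The evaluation $\beta_A \colon M^{\vee_A} \otimes_A M \to A$ is induced from the $\cC$-evaluation $\beta \colon M^\vee \otimes M \to \bbone$ composed with $\eta \colon \bbone \to A$; the co-evaluation $\alpha_A \colon A \to M \otimes_A M^{\vee_A}$ is built from the $\cC$-co-evaluation together with $\delta$. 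Verifying the two triangle identities for $(\alpha_A, \beta_A)$ then reduces, after passing through the retractions, to the triangle identities for $(\alpha, \beta)$ in $\cC$ together with the splitting-idempotent identities $\mu(\alpha) = 1$ and $(x\otimes 1)\alpha = (1\otimes x)\alpha$; this is a diagram chase rather than anything deep. It is cleanest to do this computation in the elemental/functor-of-points notation introduced in \S\ref{s:tensor}.

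The one genuine gap to fill is: \emph{why is the underlying $\cC$-object of an arbitrary $A$-module rigid?} In general it need not be, so the honest statement is that $\Mod_A$ is rigid meaning \emph{every} $A$-module has a dual — and for that I should show every $A$-module $M$ is a subquotient, indeed a retract, of $A \otimes X$ for some rigid $X \in \cC$ (namely $X = M$ itself as an object of $\cC$, via the $A$-module map $A \otimes M \to M$, which is split by $\delta$-type data since $A$ is $\mu$-split — more precisely $M$ is a retract of $A \otimes M$ as an $A$-module because $\eta \otimes \id_M$ is split by $\mu_M$ only up to... here one uses the splitting). Hmm — more carefully: $M$ is always a quotient of $A \otimes M$, and $\mu$-splitness of $A$ makes this quotient map split \emph{as $A$-modules}, so $M$ is a direct summand of the free module $A \otimes M$. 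Free modules $A \otimes X$ with $X$ rigid in $\cC$ are rigid in $\Mod_A$ (dual $A \otimes X^\vee$, since $A$ is self-dual and $\otimes_A$ of free modules is free), and a direct summand of a rigid object is rigid. So every $A$-module is rigid in $\Mod_A$.

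I expect the main obstacle to be bookkeeping rather than conceptual: carrying the idempotent $e$ through the definitions of internal Hom, evaluation and co-evaluation in $\Mod_A$ consistently, and making sure the triangle identities survive the retraction. The exactness of $\otimes_A$ and the "every module is a summand of a free module" reduction are the two load-bearing observations; once those are in place, rigidity of $\Mod_A$ follows formally from rigidity of free modules plus closure of rigid objects under retracts and $\otimes_A$.
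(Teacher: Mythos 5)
Your proof is correct, but it takes a genuinely different route from the paper's. The paper proves rigidity head-on: for an arbitrary $A$-module $M$ it takes the $\cC$-dual $M^{\vee}$ (which exists since $\cC$ is rigid by the standing hypothesis of \S\ref{s:etale2}), endows it with the unique $A$-module structure making the evaluation pairing $A$-balanced, and then writes down explicit evaluation and co-evaluation maps in $\Mod_A$ by inserting the splitting idempotent $\alpha_A$, checking the triangle identities directly. You instead run the standard separable-algebra argument: the action map $A \otimes M \to M$ is split $A$-linearly by $m \mapsto \alpha_A \cdot (1 \otimes m)$ (using $\mu(\alpha_A)=1$ and $(a \otimes 1)\alpha_A=(1 \otimes a)\alpha_A$), so every module is a retract of a free module $A \otimes M$; free modules on objects of $\cC$ are rigid in $\Mod_A$ with dual $A \otimes M^{\vee}$; and retracts of rigid objects are rigid because idempotents split in the abelian category $\Mod_A$. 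Both arguments are complete; yours avoids the triangle-identity chase at the cost of producing the dual only as a retract, whereas the paper's computation identifies the dual of $M$ in $\Mod_A$ concretely as $M^{\vee}$ itself (same underlying object), which is the more useful output downstream. Two small remarks: your worry that the underlying $\cC$-object of an arbitrary $A$-module might fail to be rigid is moot, since $\cC$ is assumed rigid throughout \S\ref{s:etale2}; consequently your middle paragraph on $\uHom_A(M,A)$ is superseded by, and can be dropped in favor of, the final retract argument. Also, your first formula for the idempotent $e$ on $M \otimes N$ does not typecheck as written ($\delta$ needs an $A$-input); the corrected version you give --- act by $\alpha_A \in \Gamma(A \otimes A)$ through the two module structures --- is exactly the paper's separate proposition that $M \otimes_A N$ is the summand of $M \otimes N$ cut out by $\alpha_A$.
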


\begin{proof}
Let $M$ be an $A$-module. Let $M^{\vee}$ be its dual in $\cC$, and let $\alpha_M$ and $\beta_M$ be the (co-)evaluation maps. The object $M^{\vee}$ admits a unique $A$-module structure such that $\beta_M(ax, y)=\beta_M(x,ay)$; we also have $(a \otimes 1) \alpha_M = (1 \otimes a) \alpha_M$ . In particular, we have $\beta_M(\alpha_A \cdot (x \otimes y))=\beta_M(x,y)$ and $\alpha_A \cdot \alpha_M=\alpha$, where $\alpha_A$ is the splitting idempotent of $A$ (recall $\mu_A(\alpha_A)=1$).

Define $\alpha_{M/A} \colon M \otimes_A M^{\vee} \to A$ to be the composition
\begin{displaymath}
\xymatrix@C=3em{
M \otimes_A M^{\vee} \ar[r]^-{\alpha_A} &
M \otimes M^{\vee} \ar[r]^-{1 \otimes \alpha_A} &
M \otimes M^{\vee} \otimes A \ar[r]^-{\beta_M \otimes \id} &
A }
\end{displaymath}
Let $\beta_{M/A}$ be the composition
\begin{displaymath}
\xymatrix@C=3em{
A \ar[r]^-{\id \otimes \alpha_M} & A \otimes M^{\vee} \otimes M \ar[r] & M^{\vee} \otimes_A M, }
\end{displaymath}
where the second map is the natural quotient map followed by scalar multiplication. One finds that
\begin{align*}
(\id \otimes \beta_{M/A})(\alpha_{M/A} \otimes \id)(x)
&= (1 \otimes \beta_M)(1 \otimes \alpha_A)(\alpha_A \otimes 1)(\alpha_M \otimes \id)(x) \\
&= (1 \otimes \beta_M)(\alpha_M \otimes \id)(x) = x,
\end{align*}
and similarly $(\beta_{M/A} \otimes \id)(\id \otimes \alpha_{M/A})=\id$. Thus $M^{\vee}$ is the dual of $M$ in $\Mod_A$.
\end{proof}

\begin{corollary} \label{cor:rigid-mod-1}
Let $A$ be an \'etale algebra.
\begin{enumerate}
\item The ring $\Gamma(A)$ is absolutely flat.
\item Ideals of $A$ correspond bijectively to idempotents of $\Gamma(A)$.
\item Any quotient algebra of $A$ is \'etale.
\end{enumerate}
\end{corollary}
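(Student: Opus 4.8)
The plan is to deduce (a) and (b) by applying Proposition~\ref{prop:rigid-unit} to the tensor category $\Mod_A$, and to deduce (c) from (b) together with Proposition~\ref{prop:trace-sum}.

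By Proposition~\ref{prop:rigid-mod}, the category $\Mod_A$ is rigid abelian, and recall that its unit object is $A$ itself. The key point I would establish first is that the invariant algebra of the unit object of $\Mod_A$ is canonically the ring $\Gamma(A)$: for any $A$-module $M$ we have $\Hom_{\Mod_A}(A, M) = \Hom_{\cC}(\bbone, M) = \Gamma(M)$ by the free--forgetful adjunction (since $A = A \otimes \bbone$ is the free $A$-module on $\bbone$), and taking $M = A$ one checks that this bijection respects the ring structures. With this in hand, (a) is immediate from Proposition~\ref{prop:rigid-unit} applied to $\Mod_A$: the invariant algebra of the unit of $\Mod_A$, namely $\Gamma(A)$, is absolutely flat. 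Likewise, an ideal of the unit object of $\Mod_A$ is by definition an $A$-submodule of $A$, that is, an ideal of $A$ in the original sense; so the bijection of Proposition~\ref{prop:rigid-unit} for $\Mod_A$ is exactly the bijection in (b) between ideals of $A$ and idempotents of $\Gamma(A)$.

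For (c), I would take a quotient algebra $B = A/I$, with $I$ an ideal of $A$. By (b) there is an idempotent $e \in \Gamma(A)$ with $I = eA$, and the product decomposition of $A$ into algebras determined by $e$ (product decompositions of $A$ correspond to idempotents of $\Gamma(A)$) identifies $B$ with the direct factor $A_{1-e}$. Since $A_{1-e}$ is a direct summand of the rigid object $A$, it is rigid, so $A_e$ and $A_{1-e}$ are rigid algebras and Proposition~\ref{prop:trace-sum} applies: the trace pairing $\beta_A$ is the orthogonal direct sum of $\beta_{A_e}$ and $\beta_{A_{1-e}}$. As $\beta_A$ is perfect, an orthogonal summand of it is perfect; in particular $\beta_{A_{1-e}}$ is perfect, so $B \cong A_{1-e}$ is étale.

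The only step that takes a moment's care is the ring-level identification of the invariant algebra of the unit of $\Mod_A$ with $\Gamma(A)$; once that is granted, (a), (b), and (c) are formal consequences of results already established, and I do not anticipate any real obstacle.
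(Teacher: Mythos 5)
Your proof is correct and follows essentially the same route as the paper: (a) and (b) come from applying Proposition~\ref{prop:rigid-unit} to the rigid tensor category $\Mod_A$ with unit $A$, and (c) comes from splitting $A$ along the idempotent corresponding to the ideal and invoking Proposition~\ref{prop:trace-sum} to see that each factor is \'etale, the quotient being identified with the complementary factor. The only addition is your explicit check that the invariant algebra of the unit of $\Mod_A$ is $\Gamma(A)$, a point the paper leaves implicit.
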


\begin{proof}
As $A$ is the unit object in the rigid tensor category $\Mod_A$, the first two statements follow from Proposition~\ref{prop:rigid-unit}. As for (c), suppose that $I$ is an ideal of $A$. Then $I=\gamma A$ for some idempotent $\gamma$ of $\Gamma(A)$. Let $J=(1-\gamma) A$. Then $I$ and $J$ are algebras (with units $\gamma$ and $1-\gamma$) and $A=I \oplus J$ holds as algebras. Since $\cC$ is rigid, so are $I$ and $J$. Since $A$ is \'etale, it follows from Proposition~\ref{prop:trace-sum} that $I$ and $J$ are \'etale algebras. Finally, note that $A/I$ is isomorphic to $J$ as an algebra, and is thus \'etale.
\end{proof}

\begin{corollary} \label{cor:rigid-mod-2}
Suppose that $\cC$ is pre-Tannakian, and let $A$ be an \'etale algebra.
\begin{enumerate}
\item The invariant algebra $\Gamma(A)$ is an \'etale $k$-algebra, i.e., it is a finite product of finite separable field extensions of $k$.
\item $A$ decomposes into a finite product of atomic \'etale algebras.
\end{enumerate}
\end{corollary}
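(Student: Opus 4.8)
The plan is to deduce both parts from two facts already established: the invariant ring of an \'etale algebra is absolutely flat (Corollary~\ref{cor:rigid-mod-1}(a)), and the multiplication map $\Gamma(X)\otimes_k\Gamma(Y)\to\Gamma(X\otimes Y)$ is injective (Proposition~\ref{prop:Gamma-tensor}). The one idea needed is that to obtain \emph{separability} — rather than mere reducedness — one should apply the first fact to $A\otimes A$, not just to $A$.

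First I would record the structure of $\Gamma(A)$ as a ring. Since $\cC$ is pre-Tannakian, $\Gamma(A)=\Hom_{\cC}(\bbone,A)$ is finite-dimensional over $\End(\bbone)=k$, so it is an Artinian commutative $k$-algebra; by Corollary~\ref{cor:rigid-mod-1}(a) it is absolutely flat, hence reduced, hence a finite product $\Gamma(A)=\prod_{i=1}^{n}L_i$ of finite field extensions of $k$. Part (b) can be extracted already here, before knowing the $L_i/k$ are separable: the factor idempotents of $\prod_i L_i$ correspond, under the correspondence between idempotents of $\Gamma(A)$ and product decompositions of $A$, to a decomposition $A=\bigoplus_{i=1}^{n}A_i$ as a product of algebras with $\Gamma(A_i)=L_i$. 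As $L_i$ is a field it has no nontrivial idempotents, so $A_i$ is atomic; and $A_i$, being a direct summand of the \'etale algebra $A$, is \'etale by Proposition~\ref{prop:trace-sum} (exactly the argument used in the proof of Corollary~\ref{cor:rigid-mod-1}(c)). Since $n$ is finite, this gives (b).

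For (a) it remains to see each $L_i/k$ is separable, and this is where $A\otimes A$ enters. The tensor product of \'etale algebras is \'etale (\S\ref{ss:etale}), so by Corollary~\ref{cor:rigid-mod-1}(a) the ring $\Gamma(A\otimes A)$ is absolutely flat, in particular reduced. Proposition~\ref{prop:Gamma-tensor} provides an injective $k$-algebra homomorphism $\Gamma(A)\otimes_k\Gamma(A)\to\Gamma(A\otimes A)$, so $\Gamma(A)\otimes_k\Gamma(A)$ is reduced; and $\Gamma(A)\otimes_k\Gamma(A)=\prod_{i,j}L_i\otimes_k L_j$ contains each $L_i\otimes_k L_i$ as a direct factor, so each $L_i\otimes_k L_i$ is reduced. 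By the standard characterization of separability of a finite extension (a finite $L/k$ is separable iff $L\otimes_k L$ is reduced: an inseparable $\theta\in L$ has minimal polynomial over $k$ that becomes non-squarefree already over $k(\theta)$, producing a nonzero nilpotent in $k(\theta)\otimes_k k(\theta)\hookrightarrow L\otimes_k L$), each $L_i/k$ is separable, so $\Gamma(A)=\prod_i L_i$ is an \'etale $k$-algebra.

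The only non-formal point is the decision to pass to $A\otimes A$. Reducedness of $\Gamma(A)$ by itself gives only that $\Gamma(A)$ is a product of fields, which over an imperfect $k$ is strictly weaker than \'etale; and one cannot recover separability from perfectness of the trace pairing on $A$, because the induced form $\Gamma(\epsilon)\colon\Gamma(A)\to k$ need not be the honest trace form of the $k$-algebra $\Gamma(A)$ (already $\Gamma(\epsilon)(1)=\dim_{\cC}(A)$, not $\dim_k\Gamma(A)$). Invoking Proposition~\ref{prop:Gamma-tensor} for $A\otimes A$ is precisely what upgrades ``reduced'' to ``geometrically reduced''; everything else is bookkeeping with results already in hand.
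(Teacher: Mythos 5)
Your proof is correct and follows essentially the paper's own route: part (a) combines Corollary~\ref{cor:rigid-mod-1} (reducedness of invariant algebras of \'etale algebras) with the injectivity in Proposition~\ref{prop:Gamma-tensor}, and part (b) decomposes $A$ via the idempotents of $\Gamma(A)$ and invokes Proposition~\ref{prop:trace-sum}. The only (harmless) difference is that the paper cites reducedness of all tensor powers $\Gamma(A)^{\otimes n}$ to conclude \'etaleness, while you make explicit that the single power $\Gamma(A)\otimes_k\Gamma(A)$, together with the standard criterion that a finite extension $L/k$ is separable iff $L\otimes_k L$ is reduced, already suffices.
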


\begin{proof}
Since $A^{\otimes n}$ is \'etale, $\Gamma(A^{\otimes n})$ is reduced (Corollary~\ref{cor:rigid-mod-1}), and so $\Gamma(A)^{\otimes n}$ is reduced (by Proposition~\ref{prop:Gamma-tensor}). Thus $\Gamma(A)$ is a finite dimensional $k$-algebra such that every tensor power is reduced. It follows that $\Gamma(A)$ is \'etale. The minimal idempotents of $\Gamma(A)$ give a decomposition of $A$ into a finite product of algebras, which are \'etale by Proposition~\ref{prop:trace-sum}.
\end{proof}

Proposition~\ref{prop:rigid-mod} implies that the tensor product in $\Mod_A$ is exact when $A$ is \'etale. In fact, we have an even stronger result about this tensor product.

\begin{proposition}
Let $A$ be an \'etale algebra, and let $M$ and $N$ be $A$-modules. Then $M \otimes_A N$ is the summand of $M \otimes N$ corresponding to the projector $\alpha_A$.
\end{proposition}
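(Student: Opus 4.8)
The plan is to realize $\alpha_A$ as an explicit idempotent endomorphism of $M \otimes N$ and then identify its image with the coequalizer $M \otimes_A N$ by producing maps in both directions.

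Set up notation first. The splitting idempotent $\alpha = \alpha_A$ is a point $\bbone \to A \otimes A$ with $\mu_A(\alpha) = 1$ and $(x \otimes 1)\alpha = (1 \otimes x)\alpha$ for every point $x$ of $A$. Give $M \otimes N$ its natural $(A \otimes A)$-module structure, with the first tensor factor of $A \otimes A$ acting through $M$ and the second through $N$; since this module structure induces a $k$-algebra homomorphism $\Gamma(A \otimes A) \to \End_\cC(M \otimes N)$ and $\alpha$ is idempotent in $\Gamma(A \otimes A)$, the point $\alpha$ acts on $M \otimes N$ as an idempotent endomorphism $e$. In the functor-of-points notation, writing $\alpha = \sum_i a_i \otimes b_i$ (shorthand for an honest morphism), $e$ is the map $m \otimes n \mapsto \sum_i a_i m \otimes b_i n$. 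Since $e$ is idempotent, $M \otimes N = \im(e) \oplus \im(\id - e)$, and $\im(e)$ is by definition the summand corresponding to $\alpha_A$; the task is to show $\im(e) \cong M \otimes_A N$.

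Let $q \colon M \otimes N \to M \otimes_A N$ be the quotient map, i.e.\ the coequalizer of the two maps $A \otimes M \otimes N \rightrightarrows M \otimes N$ given by $a \otimes m \otimes n \mapsto am \otimes n$ and $a \otimes m \otimes n \mapsto m \otimes an$. Two facts: (i) $q \circ e = q$, because in $M \otimes_A N$ we may slide the $A$-action across $\otimes$ to get $q(\sum_i a_i m \otimes b_i n) = q(\sum_i m \otimes a_i b_i n) = q(m \otimes (\mu_A(\alpha))\, n) = q(m \otimes n)$; and (ii) $e$ coequalizes the above pair, since $e(am \otimes n) = \sum_i (a a_i) m \otimes b_i n = \sum_i a_i m \otimes (a b_i) n = e(m \otimes an)$, where the middle equality is exactly the identity $(a \otimes 1)\alpha = (1 \otimes a)\alpha$. (Both displays are routine diagram chases once unwound in the spirit of the elemental notation introduced earlier.) By (ii) and the universal property of the coequalizer, $e$ factors as $e = \iota \circ q$ for a unique $\iota \colon M \otimes_A N \to M \otimes N$. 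Feeding this into (i) gives $q \circ \iota \circ q = q$, hence $q \circ \iota = \id$ since $q$ is an epimorphism. Thus $\iota$ is a split monomorphism with retraction $q$; as $e = \iota \circ q$ with $q$ epi, $\im(e) = \im(\iota) \cong M \otimes_A N$, and the idempotent $e$ splits through $M \otimes_A N$. This exhibits $M \otimes_A N$ as the summand of $M \otimes N$ cut out by $\alpha_A$.

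The only delicate point is bookkeeping — keeping straight which factor of $A \otimes A$ acts on $M$ versus $N$, and applying the ``slide across $\otimes_A$'' move with the correct sidedness. \'Etaleness of $A$ enters precisely in that it guarantees the splitting idempotent $\alpha_A$ exists (Proposition~\ref{prop:mu-split-etale}); granting that, the identities $\mu_A(\alpha) = 1$ and $(x \otimes 1)\alpha = (1 \otimes x)\alpha$ make facts (i) and (ii) one-line computations, and the rest is formal manipulation of idempotents and coequalizers valid in any abelian category.
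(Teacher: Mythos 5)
Your proof is correct. It differs in route from the paper's: the paper identifies $M \otimes_A N$ with $(M \otimes N) \otimes_{A \otimes A} A$, where $A$ is an $(A\otimes A)$-module via $\mu$, and then invokes the fact (from the $\mu$-splitting $\delta$) that $A$ is precisely the summand of $A \otimes A$ cut out by $\alpha_A$, so the conclusion follows by tensoring that summand decomposition up. You instead work directly on $M \otimes N$: you let $\alpha_A$ act as an idempotent $e$ through the $(A\otimes A)$-module structure, check that $e$ coequalizes the two maps $A \otimes M \otimes N \rightrightarrows M \otimes N$ (this is exactly $(a\otimes 1)\alpha = (1\otimes a)\alpha$) and that $q \circ e = q$ (this is exactly $\mu(\alpha)=1$), and then split $e$ through the coequalizer $q$ via the induced map $\iota$ with $q\iota = \id$. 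The paper's argument is shorter but leans on the base-change identity $M \otimes_A N \cong (M \otimes N) \otimes_{A\otimes A} A$, which itself deserves a word of justification; yours avoids that identity entirely and makes the role of the two defining properties of the splitting idempotent completely explicit, at the cost of a slightly longer diagram chase. Both are valid; your version is arguably the more self-contained one, and in effect it is an unwinding of what the paper's one-line reduction conceals.
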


\begin{proof}
We have
\begin{displaymath}
M \otimes_A N = (M \otimes N) \otimes_{A \otimes A} A,
\end{displaymath}
where here $A$ is regarded as an $(A \otimes A)$-module via $\mu$. Since $A$ is \'etale, $A$ is the summand of $A \otimes A$ corresponding to the idempotent $\alpha_A$, and so the result follows.
\end{proof}

In the pre-Tannakian case, we have the following upgrade of Proposition~\ref{prop:rigid-mod}.

\begin{proposition} \label{prop:pretan-mod}
Suppose that $\cC$ is pre-Tannakian and $A$ is an atomic \'etale algebra in $\cC$. Let $K=\Gamma(A)$, which is a finite separable extension of $k$. Then $\Mod_A$ is a pre-Tannakian category over $K$.
\end{proposition}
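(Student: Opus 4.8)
The plan is to verify the four conditions of Definition~\ref{defn:pt} for $\Mod_A$, viewed as a category over $K$, using the forgetful functor $U \colon \Mod_A \to \cC$ together with its left adjoint, the free-module functor $F \colon \cC \to \Mod_A$, $X \mapsto A \otimes X$. Two of the conditions are essentially already available: $\Mod_A$ is a rigid tensor category by Proposition~\ref{prop:rigid-mod} (this is where étaleness of $A$ is used), giving condition~(c), and $\Mod_A$ is abelian by the usual construction of kernels and cokernels of $A$-linear maps inside $\cC$, giving the first half of condition~(a). Since $A$ is atomic, $K = \Gamma(A)$ is indeed a field by Corollary~\ref{cor:rigid-mod-2}(a), so once $\End_{\Mod_A}(A)$ is identified with $K$ the category $\Mod_A$ is $K$-linear.

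Next I would dispatch conditions~(b) and~(d). The unit object of $\Mod_A$ is $A$, and $F(\bbone) = A \otimes \bbone = A$, so adjunction gives $\End_{\Mod_A}(A) \cong \Hom_{\cC}(\bbone, UA) = \Gamma(A) = K$; this isomorphism sends an $A$-linear endomorphism $\phi$ of $A$ to the point $\phi \circ \eta$, and a short computation with the $A$-linearity of $\phi$ shows it carries composition to multiplication of points of $A$, hence is a $k$-algebra isomorphism. This is condition~(d). For condition~(b), faithfulness of $U$ makes the natural map $\Hom_{\Mod_A}(M, N) \to \Hom_{\cC}(UM, UN)$ injective, and the target is finite dimensional over $k$ since $\cC$ is pre-Tannakian; hence $\Hom_{\Mod_A}(M,N)$ is finite dimensional over $k$, and therefore over the field $K$.

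It remains to show every $A$-module has finite length, the second half of condition~(a). Here I would use that $U$ is exact and reflects isomorphisms: the inverse in $\cC$ of an $A$-linear map that is an isomorphism in $\cC$ is automatically $A$-linear. Consequently a strictly increasing chain of $A$-submodules of $M$ maps under $U$ to a strictly increasing chain of subobjects of $UM$, and the latter has finite length in $\cC$; so $M$ has finite length. Finally $\Mod_A \ne 0$ because $A \ne 0$, so all four conditions hold and $\Mod_A$ is pre-Tannakian over $K$. I do not expect a genuine obstacle here — the substance is packaged into Proposition~\ref{prop:rigid-mod} and Corollary~\ref{cor:rigid-mod-2} — but the two points deserving care are checking that the adjunction isomorphism in~(d) is an isomorphism of rings and not merely of $k$-vector spaces, and checking that $U$ reflects isomorphisms so that finiteness of length transfers from $\cC$ to $\Mod_A$.
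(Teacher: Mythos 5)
Your proposal is correct and follows essentially the same route as the paper: the paper also gets rigidity from Proposition~\ref{prop:rigid-mod}, the field $K=\Gamma(A)$ from Corollary~\ref{cor:rigid-mod-2} plus atomicity, $K$-linearity from the action of $\End_A(A)=K$ on the unit $A$ (made explicit there as $af = a\otimes f$ on $A\otimes_A M$), and finite length together with finite-dimensional $\Hom$ spaces from the fact that $A$-modules are objects of $\cC$ with extra structure, i.e., from the faithful exact forgetful functor. Your extra details (the adjunction computation of $\End_A(A)$ as a ring and the check that the forgetful functor reflects isomorphisms) are fine elaborations of steps the paper treats as immediate.
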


\begin{proof}
By Corollary~\ref{cor:rigid-mod-2}, $K$ is a finite separable extension of $k$. If $f \colon M \to N$ is a map of $A$-modules and $a \in K$ then we obtain a new map $af \colon M \to N$ by $a \otimes f \colon A \otimes_A M \to A \otimes_A M$. This shows that $\Mod_A$ is a $K$-linear category. Since the objects of $\Mod_A$ are objects of $\cC$ with extra structure, it is clear that all objects have finite length and all $\Hom$ spaces are finite dimensional. We have already seen that $\Mod_A$ is rigid (Proposition~\ref{prop:rigid-mod}). Finally, $\End_A(A)=K$. Thus $\Mod_A$ is pre-Tannakian.
\end{proof}

\begin{example} \label{ex:etale-1}
Suppose $G$ is a finite group and $\cC=\Rep(G)$. If $H$ is a subgroup then the space of functions on $G/H$ is easily seen to be an atomic \'etale algebra $A$ in $\cC$. The module category $\Mod_A$ is equivalent to $\Rep(H)$.
\end{example}

\begin{example} \label{ex:etale-2}
Suppose $G$ is an admissible group with a normal measure $\mu$ and $\cC=\uRep^{\rf}(G; \mu)$ is pre-Tannakian. Given an open subgroup $U$, the Schwartz space $A=\cC(G/U)$ is an \'etale algebra (Example~\ref{ex:Schwartz-frob}), and easily seen to be atomic. The module category $\Mod_A$ is equivalent to $\uRep^{\rf}(U; \mu)$.
\end{example}

\begin{example} \label{ex:etale-3}
Suppose $G$ and $\mu$ are as above, and let $\uRep^{\rf}(\hat{G}; \mu)$ be as in Remark~\ref{rmk:rep-Ghat}, which is a rigid abelian tensor category. If $X$ is a finitary $G$-set then $\cC(X)$ is an \'etale algebra in this category (this is similar to Example~\ref{ex:Schwartz-frob}). Its invariant algebra is again $\cC(X)$, but now as an algebra in the category of vector spaces. Corollary~\ref{cor:rigid-mod-1} thus implies that $\cC(X)$ is absolutely flat, which is also easy to see directly.
\end{example}

\subsection{\'Etale morphisms}

Let $f \colon A \to B$ be an algebra morphism in $\cC$. We say that $f$ is \defn{\'etale}, or that $B$ is an \defn{\'etale $A$-algebra}, if $B$ is an \'etale algebra in the category $\Mod_A$. Note that an algebra $B$ in $\cC$ is \'etale if and only if it is an \'etale $\bbone$-algebra.

\begin{proposition} \label{prop:etale-bc}
Let $A \to B$ be an \'etale algebra map, and let $A \to A'$ be an arbitrary algebra map. Then $A' \to A' \otimes_A B$ is \'etale.
\end{proposition}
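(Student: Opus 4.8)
The plan is to realize $A' \to A' \otimes_A B$ as the image of the \'etale algebra $B$ under a base-change tensor functor, and then simply invoke Proposition~\ref{prop:etale-functor}.

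Since $\cC$ is rigid abelian, for any algebra $C$ in $\cC$ the category $\Mod_C$ is an abelian tensor category, with unit object $C$ and tensor product $\otimes_C$. The algebra homomorphism $A \to A'$ induces a base-change functor
\[
\Phi = A' \otimes_A (-) \colon \Mod_A \to \Mod_{A'}, \qquad M \mapsto A' \otimes_A M.
\]
First I would check that $\Phi$ is a tensor functor in the sense of this paper, i.e.\ a $k$-linear symmetric monoidal functor. It is additive and $k$-linear because $\otimes_A$ is; it carries the unit $A$ of $\Mod_A$ to the unit $A'$ of $\Mod_{A'}$ via the canonical isomorphism $A' \otimes_A A \cong A'$; and there is a natural isomorphism $\Phi(M \otimes_A N) \cong \Phi(M) \otimes_{A'} \Phi(N)$ coming from the standard base-change comparison $A' \otimes_A (M \otimes_A N) \cong (A' \otimes_A M) \otimes_{A'} (A' \otimes_A N)$, with symmetry and the coherence axioms being routine to verify.

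With $\Phi$ in hand, the rest is immediate: by hypothesis $B$ is an \'etale algebra in the tensor category $\Mod_A$ (this is exactly what it means for $A \to B$ to be \'etale), so Proposition~\ref{prop:etale-functor} applied to $\Phi$ shows that $\Phi(B) = A' \otimes_A B$ is an \'etale algebra in $\Mod_{A'}$. By definition this says precisely that $A' \to A' \otimes_A B$ is \'etale.

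The only real content is the verification that base change is a symmetric monoidal functor, which is entirely standard; the one point to be slightly careful about is that the relative tensor products in $\Mod_A$ and $\Mod_{A'}$ are defined as coequalizers, so the comparison isomorphism relies on $\Phi$ preserving these. This holds because $\Phi$ is a left adjoint (to restriction of scalars along $A \to A'$) and hence right exact and colimit-preserving. I do not expect any genuine obstacle here: note in particular that no exactness of $\Phi$ beyond this is needed, since the proof of Proposition~\ref{prop:etale-functor} only uses that tensor functors preserve rigid objects, the trace form, and perfect pairings.
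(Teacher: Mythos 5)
Your proposal is correct and is exactly the paper's argument: the paper's proof is the single sentence that $A' \otimes_A - \colon \Mod_A \to \Mod_{A'}$ is a tensor functor and therefore sends \'etale algebras to \'etale algebras by Proposition~\ref{prop:etale-functor}. Your additional remarks verifying that base change is symmetric monoidal (and preserves the coequalizers defining $\otimes_A$) just flesh out the same step the paper takes for granted.
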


\begin{proof}
The functor $\Mod_A \to \Mod_{A'}$ given by $A' \otimes_A -$ is a tensor functor, and therefore takes \'etale algebras to \'etale algebras (Proposition~\ref{prop:etale-functor}).
\end{proof}

\begin{proposition} \label{prop:et-comp}
Let $f \colon A \to B$ be an algebra homomorphism, with $A$ \'etale. Then $B$ is \'etale if and only if $f$ is \'etale.
\end{proposition}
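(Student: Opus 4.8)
I would prove the two implications separately, and in both I would exploit the standing hypothesis that $A$ is \'etale: by Proposition~\ref{prop:rigid-mod} this makes $\Mod_A$ a rigid abelian tensor category, and (more importantly) by the Proposition immediately preceding Proposition~\ref{prop:pretan-mod} it makes $M\otimes_A N$ a canonical direct summand of $M\otimes N$ in $\cC$ for all $A$-modules $M,N$; write $q_{M,N}\colon M\otimes N\twoheadrightarrow M\otimes_A N$ for the projection and $\iota_{M,N}\colon M\otimes_A N\hookrightarrow M\otimes N$ for the section, so $q\iota=\id$. The properties of $\iota$ and $q$ I will need are: $q_{B,B}$ is an algebra homomorphism with $\mu_B=\mu_{B/A}\circ q_{B,B}$; $\iota_{B,B}$ is linear for the evident $(B\otimes B)$-module structures (on the source through $q_{B,B}$, on the target by multiplication); $\iota$ is natural for $A$-linear maps in each variable; and $(\epsilon_A\otimes\id_M)\circ\iota_{A,M}=\id_M$ for every $A$-module $M$, where $\epsilon_A$ is the trace of $A$. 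All of these should follow from the description of $\otimes_A$ as the $\alpha_A$-summand together with the identities $\mu_A(\alpha_A)=1$ and $(\epsilon_A\otimes 1)\alpha_A=1$ of Proposition~\ref{prop:mu-split-etale}.

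\emph{If $B$ is \'etale then $f$ is \'etale.} Here $B$ and $A$ are both \'etale $\bbone$-algebras, so base change along $\bbone\to A$ (Proposition~\ref{prop:etale-bc}) shows that the coproduct $A\otimes B$ of $A$ and $B$ in $\Alg(\cC)$ is an \'etale algebra over $A$. The action map $A\otimes B\to B$, $a\otimes b\mapsto f(a)b$, is an algebra homomorphism in $\Mod_A$, and it is a split epimorphism in $\cC$ with section $\eta_A\otimes\id_B$, hence surjective. Thus $B$ is a quotient algebra of the \'etale $A$-algebra $A\otimes B$, so $B$ is \'etale over $A$ by Corollary~\ref{cor:rigid-mod-1}(c) applied inside $\Mod_A$. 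This direction is short.

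\emph{If $f$ is \'etale then $B$ is \'etale.} Now $B$ carries the $\mu_{B/A}$-splitting $\delta_{B/A}\colon B\to B\otimes_A B$ and the trace $\epsilon_{B/A}\colon B\to A$ of an \'etale algebra in $\Mod_A$ (Propositions~\ref{prop:eta-dual} and~\ref{prop:mu-split-etale}); let $\alpha_{B/A}=\delta_{B/A}\circ\eta_{B/A}$ be the associated splitting idempotent. I would transport these to $\cC$ through $\iota$ and through $\epsilon_A$: set $\delta_B=\iota_{B,B}\circ\delta_{B/A}\colon B\to B\otimes B$ and $\epsilon_B=\epsilon_A\circ\epsilon_{B/A}\colon B\to\bbone$, and $\alpha_B=\delta_B\circ\eta_B$. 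Using $q\iota=\id$, the factorization $\mu_B=\mu_{B/A}\circ q_{B,B}$, and the $(B\otimes B)$-linearity of $\iota_{B,B}$, one checks that $\delta_B$ is a $\mu_B$-splitting of $B$ with splitting idempotent $\alpha_B$; in particular $\mu_B(\alpha_B)=1$ and $(x\otimes 1)\alpha_B=(1\otimes x)\alpha_B$ for every point $x$ of $B$. Using naturality of $\iota$ for the $A$-linear map $\epsilon_{B/A}$, the identity $(\epsilon_A\otimes\id_B)\circ\iota_{A,B}=\id_B$, and the relation $(\epsilon_{B/A}\otimes_A\id_B)\circ\alpha_{B/A}=\eta_{B/A}$ (Proposition~\ref{prop:mu-split-etale} applied to $B$ over $A$), one then checks $(\epsilon_B\otimes\id_B)\circ\alpha_B=\eta_B$. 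Exactly as in the proof of Proposition~\ref{prop:et-frob}(a), these two facts show that $\alpha_B$ together with $\beta_B=\epsilon_B\circ\mu_B$ exhibit $B$ as self-dual, hence rigid; since $B$ is rigid and $\mu_B$-split, Proposition~\ref{prop:mu-split-etale} gives that $B$ is \'etale in $\cC$.

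The main obstacle is the converse direction, and within it the bookkeeping around $\iota$: one must isolate the handful of structural properties of the summand inclusion listed in the first paragraph and verify them, since it is through $\iota$ that the \'etale data of $B$ relative to $A$ is pushed forward to absolute \'etale data over $\bbone$. Equivalently, and perhaps more cleanly, one can observe that $q$ and $\iota$ make the forgetful functor $\Mod_A\to\cC$ a Frobenius monoidal functor when $A$ is \'etale, so that it carries the Frobenius algebra structure on $B$ over $A$ (Proposition~\ref{prop:et-frob}) to a Frobenius algebra structure on $B$ in $\cC$, whence $B$ is \'etale in $\cC$ by Proposition~\ref{prop:et-frob}(a).
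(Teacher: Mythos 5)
Your proposal is correct, but it is a hybrid: one direction coincides with the paper's argument while the other takes a genuinely different route. For ``$f$ \'etale $\Rightarrow$ $B$ \'etale'' you transport the relative splitting through the identification of $B \otimes_A B$ with the $\alpha_A$-summand of $B \otimes B$; this is exactly the paper's construction, which phrases it with the idempotent $\alpha_B = \alpha_A\,\alpha_{B/A}$ rather than the map $\delta_B = \iota_{B,B}\circ\delta_{B/A}$ (note $\delta_B(1)=\alpha_A\alpha_{B/A}$, so the two are the same datum). Your subsequent verification of self-duality via $\epsilon_B=\epsilon_A\circ\epsilon_{B/A}$ is correct but redundant: throughout \S\ref{s:etale2} the category $\cC$ is rigid, so $B$ is automatically rigid, and once $\mu$-splitness is established Proposition~\ref{prop:mu-split-etale} already gives \'etaleness --- this is precisely how the paper concludes, and you could drop that paragraph. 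For the converse ``$B$ \'etale $\Rightarrow$ $f$ \'etale'' you diverge from the paper: the paper simply observes that the image of $\alpha_B$ in $B\otimes_A B$ is a splitting idempotent and invokes rigidity of $\Mod_A$ (Propositions~\ref{prop:rigid-mod} and~\ref{prop:mu-split-etale}), whereas you base-change along $\bbone\to A$ (Proposition~\ref{prop:etale-bc}) to get the \'etale $A$-algebra $A\otimes B$ and then realize $B$ as its quotient via the split surjection $a\otimes b\mapsto f(a)b$, applying Corollary~\ref{cor:rigid-mod-1}(c) inside $\Mod_A$. Both are valid; the paper's version is shorter and symmetric with the forward direction, while yours avoids handling the idempotent directly at the cost of invoking the ideal--idempotent machinery indirectly through Corollary~\ref{cor:rigid-mod-1}(c) (legitimately, since $\Mod_A$ is rigid abelian when $A$ is \'etale). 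The structural properties of $q$ and $\iota$ you list in your first paragraph do all hold, for the reason you indicate: the projector is multiplication by the image of $\alpha_A$ in the commutative algebra $B\otimes B$, so the summand inclusion is $(B\otimes B)$-linear, and $(\epsilon_A\otimes\id)\circ\iota_{A,M}=\id_M$ follows from $(\epsilon_A\otimes\id_A)\alpha_A=\eta_A$ established in the proof of Proposition~\ref{prop:mu-split-etale}.
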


\begin{proof}
Suppose that $f$ is \'etale. Let $\alpha_{B/A} \in \Gamma(B \otimes_A B)$ be a splitting idempotent for $B$ in $\Mod_A$, and let $\alpha_A \in \Gamma(A \otimes A)$ be the splitting idempotent for $A$. Put $\alpha_B=\alpha_A \alpha_{B/A} \in \Gamma(B \otimes B)$. (Here we are using the fact that multiplication by $\alpha_A$ identifies $B \otimes_A B$ with a summand of $B \otimes B$.) It is clear that $\alpha_B$ is idempotent and that $\mu_B(\alpha_B)=1$. If $x$ is a point of $B$ then $(x \otimes 1)\alpha_{B/A}=(1 \otimes x) \alpha_{B/A}$ holds in $B \otimes_A B$. Multiplying by $\alpha_A$, we see that $(x \otimes 1) \alpha_B = (1 \otimes x) \alpha_B$ holds in $B \otimes B$. Thus $\alpha_B$ is a splitting idempotent for $B$, and so $B$ is $\mu$-split. Since $\cC$ is rigid, it follows that $B$ is \'etale (Proposition~\ref{prop:mu-split-etale}).

Now suppose that $B$ is \'etale. Let $\alpha_{B/A}$ be the image of $\alpha_B$ in $B \otimes_A B$. This is clearly a splitting idempotent, and so $B$ is $\mu$-split in $\Mod_A$. Since $\Mod_A$ is rigid (Proposition~\ref{prop:rigid-mod}), it follows that $B$ is \'etale in $\Mod_A$ (Proposition~\ref{prop:mu-split-etale}).
\end{proof}

We note that the proposition implies that a composition of \'etale maps is \'etale.

\begin{corollary} \label{cor:et-fiber}
Let $A \to B$ and $A \to C$ be maps of \'etale algebras. Then $B \otimes_A C$ is an \'etale algebra. If $A$ is atomic, and $B$ and $C$ are non-zero then $B \otimes_A C$ is also non-zero.
\end{corollary}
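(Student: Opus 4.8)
The plan is to deduce both assertions from the base-change and composition properties of étale maps established above, together with a description of the unit object of $\Mod_A$.

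For the first claim, I would begin by using Proposition~\ref{prop:et-comp} to upgrade the hypotheses: since $A$ and $B$ are étale, the algebra map $A \to B$ is étale, i.e.\ $B$ is an étale algebra in $\Mod_A$, and likewise $C$ is an étale algebra in $\Mod_A$. Applying Proposition~\ref{prop:etale-bc} to the étale map $A \to B$ and the base change $A \to C$ then shows that $C \to B \otimes_A C$ is an étale algebra map in $\cC$. Finally, since $C$ is itself an étale algebra in $\cC$ and $C \to B \otimes_A C$ is étale, a second application of Proposition~\ref{prop:et-comp} yields that $B \otimes_A C$ is an étale algebra in $\cC$.

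For the second claim, I would work inside the rigid abelian tensor category $\Mod_A$ (Proposition~\ref{prop:rigid-mod}), whose unit object is $A$ and whose endomorphism algebra $\End_{\Mod_A}(A)$ coincides with $\Gamma(A)$ (an $A$-linear endomorphism of $A$ is multiplication by an element of $\Gamma(A)$). Atomicity of $A$ says $\Gamma(A)$ has no idempotents besides $0 \ne 1$, so Proposition~\ref{prop:rigid-unit}, applied in $\Mod_A$, shows that $A$ has no non-trivial ideals there, i.e.\ $A$ is a simple object of $\Mod_A$. Since $B \ne 0$, the structure map $A \to B$ --- which is the unit of $B$ as an algebra object of $\Mod_A$ --- is non-zero (a zero unit would force $\id_B = 0$), hence injective because its source is simple. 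As $\otimes_A$ is exact, Corollary~\ref{cor:ff-1} shows that $B$ is faithfully flat in $\Mod_A$, and therefore $B \otimes_A C \ne 0$ whenever $C \ne 0$.

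I do not expect a real obstacle here: the whole argument is bookkeeping, transporting the property ``étale'' along the tensor functors $\cC \to \Mod_A$ and $\Mod_A \to \Mod_C$ and invoking the known structure of $\Mod_A$. The only point demanding care is to stay within the rigid abelian (not pre-Tannakian) hypotheses of \S\ref{s:etale2}: the non-vanishing has to be extracted from simplicity of the unit of $\Mod_A$ via Proposition~\ref{prop:rigid-unit}, rather than by quoting a feature special to pre-Tannakian categories such as Corollary~\ref{cor:nonzero-tensor}.
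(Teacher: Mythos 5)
Your proposal is correct and follows essentially the same argument as the paper: express the unit map of $B \otimes_A C$ as a composite of \'etale maps via Propositions~\ref{prop:et-comp} and~\ref{prop:etale-bc} (the paper factors through $\bbone \to A \to B \to B \otimes_A C$, you through $\bbone \to C \to B \otimes_A C$, a trivial symmetry), and then deduce non-vanishing from injectivity of $A \to B$ (atomicity forcing trivial ideals) together with faithful flatness from Corollary~\ref{cor:ff-1} in $\Mod_A$. Your re-derivation of the ideal--idempotent correspondence by applying Proposition~\ref{prop:rigid-unit} inside $\Mod_A$ is exactly how the paper proves Corollary~\ref{cor:rigid-mod-1}, which it cites directly at this point.
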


\begin{proof}
The map $\bbone \to A$ is \'etale by assumption, the map $A \to B$ is \'etale (Proposition~\ref{prop:et-comp}), and the map $B \to B \otimes_A C$ is \'etale (Proposition~\ref{prop:etale-bc}). Thus $\bbone \to B \otimes_A C$ is \'etale (Proposition~\ref{prop:et-comp}). We now prove the final sentence. Since $A$ is atomic, its only ideals are~0 and $A$ (Corollary~\ref{cor:rigid-mod-1}). Since $A$ and $B$ are non-zero algebras, the map $A \to B$ is non-zero, and so it is necessarily injective. Thus $B$ is a faithfully flat $A$-module (Corollary~\ref{cor:ff-1}), and so $B \otimes_A C$ is non-zero since $C$ is non-zero.
\end{proof}

\subsection{Subalgebras} \label{ss:et-sub}

Let $B$ be an \'etale algebra in $\cC$. Our goal now is to classify the \'etale subalgebras of $B$. The classification will use the following concept. An \defn{E-idempotent} of $B$ is an idempotent $\gamma \in \Gamma(B \otimes B)$ satisfying the following conditions:
\begin{enumerate}
\item We have $\alpha \cdot \gamma=\alpha$, where $\alpha$ is the splitting idempotent for $B$.
\item $\gamma$ is symmetric, i.e., $\tau(\gamma)=\gamma$, where $\tau \colon B \otimes B \to B \otimes B$ is the symmetry isomorphism.
\item For $1 \le i<j \le 3$, let $\gamma_{i,j}$ be the idempotent of $B \otimes B \otimes B$ that puts $\gamma$ in the $i$ and $j$ factors and~1 in the remaining factor. Then $\gamma_{1,2} \gamma_{2,3} = \gamma_{1,2} \gamma_{1,3} = \gamma_{1,3} \gamma_{2,3}$.
\end{enumerate}
Given an $E$-idempotent $\gamma$, define $B^{\gamma}$ to be the kernel of the map
\begin{displaymath}
B \to B \otimes B, \qquad x \mapsto \gamma \cdot (x \otimes 1 - 1 \otimes x).
\end{displaymath}
If $\cC$ is the category of vector spaces then an E-idempotent of $B$ corresponds to an equivalence relation on $\Spec(B)$, and $\Spec(B^{\gamma})$ is the quotient space. Conversely, suppose $A$ is a subalgebra of $B$. Let $I(A)$ be the kernel of the map $B \otimes B \to B \otimes_A B$. By Corollary~\ref{cor:rigid-mod-1}, there is a unique idempotent $\gamma(A)$ of $\Gamma(B \otimes B)$ such that $I(A)=\ker(\gamma(B))$. The following is our main result:

\begin{proposition} \label{prop:etale-sub}
Let $B$ be an \'etale algebra. Then $A \mapsto \gamma(A)$ and $\gamma \mapsto B^{\gamma}$ define mutually inverse bijections
\begin{displaymath}
\{ \text{\'etale subalgebras of $B$} \} \leftrightarrow \{ \text{E-idempotents of $B$} \}.
\end{displaymath}
\end{proposition}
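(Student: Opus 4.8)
The strategy is to check that both assignments land in the stated sets and are mutually inverse, using throughout that every tensor power of $B$ is again \'etale (Proposition~\ref{prop:trace-sum}), so that Corollary~\ref{cor:rigid-mod-1} applies to $B^{\otimes n}$: the ring $\Gamma(B^{\otimes n})$ is absolutely flat and its ideals correspond bijectively to idempotents. Note first that $\gamma(A)$ makes sense for \emph{any} subalgebra $A\subseteq B$ (the kernel $I(A)$ of the surjection $q\colon B\otimes B\to B\otimes_A B$ is an ideal of the \'etale algebra $B\otimes B$), with $\gamma(A)\cdot$ being the projection of $B\otimes B$ onto the summand $\im(\gamma(A))\cong B\otimes_A B$ complementary to $I(A)$; symmetrically $B^{\gamma}$ makes sense for any idempotent $\gamma$. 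To see $\gamma(A)$ is an E-idempotent when $A$ is \'etale: condition (b) is immediate, since the swap $\tau$ is an algebra automorphism of $B\otimes B$ preserving $I(A)$, hence fixes $\gamma(A)$ by uniqueness. For (a) I would use the identity $\beta_{B\otimes B}(v,\delta_B(1))=\epsilon_B(\mu_B(v))$ read off from the proof of Proposition~\ref{prop:mu-split-etale}; since the codiagonal $\mu_B$ annihilates $I(A)$, the splitting idempotent $\alpha_B=\delta_B(1)$ is orthogonal to $I(A)$ for the trace pairing of $B\otimes B$, hence lies in the complementary summand $\im(\gamma(A))$ (orthogonality of $\im(e)$ and $\ker(e)$, as in Proposition~\ref{prop:rigid-unit}), which says exactly $\alpha_B\cdot\gamma(A)=\alpha_B$. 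For (c), with $I_{12},I_{13},I_{23}\subseteq B^{\otimes 3}$ the kernels of the three maps identifying two factors over $A$, one has $\ker(\gamma_{i,j})=I_{i,j}$, and the three sums $I_{12}+I_{23}$, $I_{12}+I_{13}$, $I_{13}+I_{23}$ all equal $I_{12}+I_{13}+I_{23}$ because in $B\otimes_A B\otimes_A B$ any two of the three factors are automatically balanced over $A$; since $\ker(ef)=\ker(e)+\ker(f)$ for commuting idempotents, each of $\gamma_{12}\gamma_{23}$, $\gamma_{12}\gamma_{13}$, $\gamma_{13}\gamma_{23}$ is the idempotent with that common kernel, which is (c).

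Next I would show $B^{\gamma}$ is an \'etale subalgebra. A short computation with points gives that $B^{\gamma}=\ker(x\mapsto\gamma(x\otimes 1-1\otimes x))$ contains $1$ and is closed under multiplication, using that $\gamma(y\otimes 1)=\gamma(1\otimes y)$ for $y\in B^{\gamma}$. Write $A=B^{\gamma}$; as $\cC$ is rigid, $A$ is rigid, so by Proposition~\ref{prop:mu-split-etale} it suffices to produce a $\mu$-splitting, and I claim $\delta_A(x):=\gamma\cdot(x\otimes 1)$ works. That $\mu_A\circ\delta_A=\id_A$ follows from $\mu_B$ being an algebra homomorphism together with $\mu_B(\gamma)=1$ (which is (a) combined with $\mu_B(\alpha_B)=1$), and $(A\otimes A)$-linearity follows from the relation $\gamma(y\otimes 1)=\gamma(1\otimes y)$ for $y\in A$. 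The only step needing real work is that $\delta_A(x)$ lands in $A\otimes A=\ker(\phi\otimes\id_B)\cap\ker(\id_B\otimes\phi)$, where $\phi(a)=\gamma(a\otimes 1-1\otimes a)$: expanding $(\phi\otimes\id_B)(\gamma(x\otimes 1))$ in $B^{\otimes 3}$ and using (c) to identify $\gamma_{1,2}\gamma_{1,3}$ with $\gamma_{1,2}\gamma_{2,3}$, it collapses to $\gamma_{1,2}\cdot\big((x\otimes 1-1\otimes x)\otimes 1\big)=0$, which holds since $x\in B^{\gamma}$, and the second factor is handled the same way. Hence $A=B^{\gamma}$ is \'etale.

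For mutual inverseness: $B^{\gamma(A)}$ unwinds to the equalizer of the two structure maps $B\rightrightarrows B\otimes_A B$, which contains $A$; since $A$ is \'etale, the unit $A\to B$ is an injection of algebras, so $B$ is faithfully flat over $A$ (Corollary~\ref{cor:ff-1} in $\Mod_A$), and the standard faithfully-flat-descent argument (tensor $0\to A\to B\to B\otimes_A B$ up to $B$ over $A$, where it becomes split exact via the multiplication maps) gives $A=B^{\gamma(A)}$. Conversely, for $\gamma$ an E-idempotent put $A=B^{\gamma}$; then $I(A)\subseteq\ker(\gamma\cdot)=\im(1-\gamma)$ since $\gamma$ annihilates every generator $(x\otimes y)(a\otimes 1-1\otimes a)$ of $I(A)$ with $a\in A$. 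For the reverse inclusion, use that $A$ is now known to be \'etale, so it has a splitting idempotent $\alpha_A\in\Gamma(A\otimes A)$ with $\mu_A(\alpha_A)=1$; pushing $\alpha_A$ through the map $A\otimes A\to(B\otimes B)\otimes A\to B\otimes B$ sending $(a_1\otimes a_2)$ to $(a_1\otimes 1)(a_2\otimes 1-1\otimes a_2)$, and using $\mu_A(\alpha_A)=1$ together with the fact that $\alpha_A=\delta_A(1)$ maps to $\gamma$ in $\Gamma(B\otimes B)$, one gets the value $1\otimes 1-\gamma$, exhibiting $1-\gamma\in I(A)$. Thus $I(A)=\im(1-\gamma)$ and $\gamma(A)=\gamma$, so the two assignments are inverse to each other.

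The main obstacle I anticipate is the \'etaleness of $B^{\gamma}$, and inside it the verification that the candidate splitting $\delta_A(x)=\gamma(x\otimes 1)$ takes values in $A\otimes A$ --- this is the categorical incarnation of ``transitivity of an equivalence relation,'' and is precisely where axiom (c) is indispensable; axioms (a) and (c) together are what make $B^{\gamma}$ an \'etale subalgebra, while axiom (b) is needed only so that the symmetric idempotents are exactly those of the form $\gamma(A)$, matching the source and target of the claimed bijection. The descent identity $B^{\gamma(A)}=A$ is then routine given faithful flatness.
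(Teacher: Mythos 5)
Your proposal is correct and follows essentially the same route as the paper's proof (which is organized as Lemmas~\ref{lem:etale-sub-1} and~\ref{lem:etale-sub-2} plus a descent argument): E-idempotency of $\gamma(A)$, \'etaleness of $B^{\gamma}$ via condition (c), faithfully flat descent for $A=B^{\gamma(A)}$, and $1-\gamma\in I(A)$ to get $\gamma(A)=\gamma$ from the ideal--idempotent bijection. The only deviations are cosmetic: you verify (a) by trace-pairing orthogonality where the paper uses $(x\otimes 1)\alpha=(1\otimes x)\alpha$ directly, you phrase (c) via sums of the ideals $I_{i,j}$ rather than the telescoping inequality $\gamma_{1,2}\gamma_{2,3}\le\gamma_{1,3}$, and you exhibit the $\mu$-splitting $\delta_A(x)=\gamma(x\otimes 1)$ explicitly instead of checking $\gamma\in\Gamma(A\otimes A)$ and citing Proposition~\ref{prop:mu-split-etale} --- all of which are sound.
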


We require two lemmas before proving the proposition.

\begin{lemma} \label{lem:etale-sub-1}
Let $B$ be an \'etale algebra, let $\gamma$ be an E-idempotent of $B$, and put $A=B^{\gamma}$. Then:
\begin{enumerate}
\item $A$ is a subalgebra of $B$.
\item $\gamma$ belongs to $\Gamma(A \otimes A) \subset \Gamma(B \otimes B)$.
\item $A$ is \'etale, and $\gamma$ is a splitting idempotent for $A$.
\item $I(A)=\ker(\gamma)$.
\end{enumerate}
\end{lemma}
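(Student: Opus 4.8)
\textbf{Plan for the proof of Lemma~\ref{lem:etale-sub-1}.}
The strategy is to work entirely with the functor-of-points formalism, reducing each assertion to a manipulation with the $E$-idempotent $\gamma$, the splitting idempotent $\alpha$ of $B$, and the trace pairing. I will treat the four parts in the order stated, as each builds on the previous.

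For part~(a), I need that $A = B^\gamma$ is closed under multiplication and contains the unit. The unit map $\eta\colon\bbone\to B$ lands in $A$ because $\gamma\cdot(\eta\otimes 1 - 1\otimes\eta)$ is visibly zero (both terms are $\gamma$ after using the unit axiom). For closure under multiplication, if $x,y$ are $T$-points of $B$ that lie in $A$ — meaning $\gamma\cdot(x\otimes 1)=\gamma\cdot(1\otimes x)$ and similarly for $y$ — I want $\gamma\cdot(xy\otimes 1)=\gamma\cdot(1\otimes xy)$. The point is to write $\gamma\cdot(xy\otimes 1)$ as $\gamma\cdot(x\otimes 1)\cdot(y\otimes 1)$, slide the $(x\otimes 1)$ factor across $\gamma$ (it equals $1\otimes x$), then slide $(y\otimes 1)$ across, landing at $\gamma\cdot(1\otimes x)(1\otimes y)=\gamma\cdot(1\otimes xy)$; here the module structure of $B\otimes B$ over itself is what lets these manipulations make sense. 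A small technical wrinkle: ``$x$ is a point of $A$'' should be phrased as $x$ being a $T$-point of $B$ that factors through the inclusion $A\hookrightarrow B$, and one checks this factoring is equivalent to the displayed equation; this is where the definition of $B^\gamma$ as a kernel is used, together with the fact that $A\to B$ is a monomorphism so that $A$-points are the same as $B$-points killed by the defining map.

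Part~(b) says $\gamma$, viewed as an invariant of $B\otimes B$, actually lies in $\Gamma(A\otimes A)$. Since $A\otimes A$ is a summand of $B\otimes B$ (both are rigid, $A\to B$ split by the trace, so $A\otimes A\to B\otimes B$ is split mono), it suffices to show $\gamma$ is fixed by the two projectors onto $B\otimes A$ and $A\otimes B$, or equivalently that $\gamma\cdot(x\otimes 1 - 1\otimes x)\cdot\gamma = 0$ in $B\otimes B\otimes$(something) — more precisely, that multiplying $\gamma$ (in one tensor slot of $B\otimes B\otimes B$) appropriately by $\gamma$ in an adjacent slot forces the relevant point to be an $A$-point. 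The clean way: condition~(c) in the definition of $E$-idempotent, $\gamma_{1,2}\gamma_{2,3}=\gamma_{1,2}\gamma_{1,3}$, is exactly the statement that after multiplying by $\gamma_{1,2}$, a point in the middle slot becomes an $A$-point; applying this with the symmetry condition~(2) handles both slots. So (b) is essentially a repackaging of conditions~(2) and~(3).

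For part~(c), I want to exhibit $\gamma$ as a splitting idempotent for $A$, i.e.\ verify $\mu_A(\gamma)=1$ and $(x\otimes 1)\gamma=(1\otimes x)\gamma$ for points $x$ of $A$ — the latter holds \emph{by the very definition of $B^\gamma$}, so the content is $\mu_A(\gamma)=1$, which follows from $\mu_B(\gamma)=1$, and that in turn from condition~(1): $\alpha\cdot\gamma=\alpha$ gives $\mu_B(\alpha\cdot\gamma)=\mu_B(\alpha)=1$, and one massages this using that $\alpha$ is the splitting idempotent (so $\mu_B(\alpha\cdot\xi)=\mu_B(\xi)$ after using $(y\otimes1)\alpha=(1\otimes y)\alpha$, or directly $(\epsilon\otimes 1)$-type identities from Proposition~\ref{prop:mu-split-etale}). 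Once $\gamma$ is a splitting idempotent for $A$, Proposition~\ref{prop:mu-split-etale} gives that $A$ is $\mu$-split, hence \'etale since $\cC$ is rigid and $A$, being a subobject of the rigid object $B$ in an abelian rigid category, is rigid. Finally part~(d), $I(A)=\ker(\gamma)$: by Corollary~\ref{cor:rigid-mod-1} the ideal $I(A)=\ker(B\otimes B\to B\otimes_A B)$ is $\ker$ of a unique idempotent, so it suffices to identify that idempotent with $\gamma$; since $B\otimes_A B$ is the summand of $B\otimes B$ cut out by the splitting idempotent of $B$ over $A$, and we have just shown $\gamma$ \emph{is} that splitting idempotent (viewed in $\Gamma(A\otimes A)$, then pushed into $\Gamma(B\otimes B)$ — this is where (b) is needed to make sense of ``$\gamma$ over $A$''), the two idempotents coincide and (d) follows.

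\textbf{Main obstacle.} The genuinely delicate point is part~(b) together with the bookkeeping in (d): one must be careful about \emph{which} algebra $\gamma$ is regarded as living over, and the passage between $\Gamma(A\otimes A)$, $\Gamma(B\otimes B)$, and $\Gamma(B\otimes_A B)$, along the various split inclusions/projections. Condition~(3) in the definition of $E$-idempotent is doing the heavy lifting there, and unwinding exactly how $\gamma_{1,2}\gamma_{2,3}=\gamma_{1,2}\gamma_{1,3}$ translates into ``the middle point is an $A$-point'' requires a careful diagram chase with three tensor factors; everything else is a routine application of the splitting-idempotent calculus already developed in \S\ref{s:etale}.
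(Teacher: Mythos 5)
Your proposal is essentially correct and, for parts (a), (c) and (d), follows the paper's argument: (a) is the same unit/multiplicativity computation (your ``sliding'' of $x\otimes 1$ across $\gamma$ is the paper's identity $xy\otimes 1-1\otimes xy=(x\otimes 1)(y\otimes 1-1\otimes y)+(x\otimes 1-1\otimes x)(1\otimes y)$); (c) is identical ($\mu(\alpha\gamma)=\mu(\alpha)$ plus $\mu(\alpha)=1$ gives $\mu(\gamma)=1$, and the $A$-linearity of $\gamma$ is the definition of $B^\gamma$); for (d) the paper instead computes directly that $\gamma\in A\otimes A$ maps to $1\otimes\mu_A(\gamma)=1$ in $B\otimes_A B$, but your route through the fact that $B\otimes_A B$ is the summand of $B\otimes B$ cut out by the splitting idempotent of $A$ (which by (c) and uniqueness is $\gamma$ itself -- note it is the splitting idempotent \emph{of $A$}, not ``of $B$ over $A$'') is an equivalent use of the same proposition and is fine, since $A$ is already known to be \'etale at that point.

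The one step you should repair is the opening framing of (b). The claim that $A\otimes A$ is a summand of $B\otimes B$ because ``$A\to B$ is split by the trace'' is not available there: $A$ is not yet known to be \'etale (that is part (c), which needs (b)), and the trace pairing of $B$ does not by itself furnish a retraction $B\to A$; asserting such a splitting at this stage is circular. Fortunately no splitting is needed, and your ``clean way'' is exactly the paper's mechanism: since $\otimes$ is exact, $A\otimes A$ is the kernel inside $B\otimes B$ of $(d\otimes\id)\oplus(\id\otimes d)$, where $d\colon B\to B\otimes B$ is $x\mapsto\gamma\cdot(x\otimes 1-1\otimes x)$, and one computes $(d\otimes\id)(\gamma)=\gamma_{1,2}(\gamma_{1,3}-\gamma_{2,3})$ and $(\id\otimes d)(\gamma)=\gamma_{2,3}(\gamma_{1,2}-\gamma_{1,3})$, both of which vanish by condition (c) of the definition of E-idempotent (so symmetry of $\gamma$ is not even needed, though using it for the second slot as you suggest is also fine). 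If you state (b) this way from the start, the whole plan goes through as in the paper.
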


\begin{proof}
(a) It is clear that 1 belongs to $A$, i.e., $\eta$ factors through $A$. Suppose $x$ and $y$ are points of $A$. We have
\begin{displaymath}
xy \otimes 1 - 1 \otimes xy = (x \otimes 1)(y \otimes 1 - 1 \otimes y)+(x \otimes 1 - 1 \otimes x) (1 \otimes y).
\end{displaymath}
Since the right side is killed by $\gamma$, so is the left side. Thus $xy$ is a point of $A$, and so $A$ is closed under multiplication. Thus $A$ is a subalgebra of $B$.

(b) Since $\otimes$ is exact, $A \otimes A$ is the kernel of the map
\begin{align*}
B \otimes B &\to (B \otimes B \otimes B) \oplus (B \otimes B \otimes B) \\
x \otimes y &\mapsto \big( \gamma_{1,2} (x \otimes 1 - 1 \otimes x) \otimes y \big) \oplus \big( \gamma_{2,3} x \otimes (y \otimes 1 - 1 \otimes y) \big).
\end{align*}
The image of $\gamma$ under this map is
\begin{displaymath}
\gamma_{1,2} (\gamma_{1,3}-\gamma_{2,3}) \oplus \gamma_{2,3}(\gamma_{1,3}-\gamma_{1,2}),
\end{displaymath}
which vanishes by assumption. Thus $\gamma$ maps into $A \otimes A$.

(c) Let $\alpha$ be the splitting idempotent for $B$. Applying $\mu$ to the equation $\alpha \gamma=\alpha$, and using the fact that $\mu(\alpha)=1$, we find $\mu(\gamma)=1$. We have $(x \otimes 1) \gamma = (1 \otimes x) \gamma$ for any point $x$ of $A$ by definition of $A$. Thus $\gamma$ is a splitting idempotent of $A$, and so $A$ is \'etale (Proposition~\ref{prop:mu-split-etale}).

(d) We have $I(A) \subset \ker(\gamma)$ by definition of $A$. Since $\ker(\gamma)$ is generated by $1-\gamma$, it is enough to show that $1-\gamma \in I(A)$, or, equivalently, that $\gamma$ maps to~1 in $B \otimes_A B$. If $z$ is any element of $A \otimes A$ then its image in $B \otimes_A B$ is equal to $1 \otimes \mu_A(z)$. Since $\gamma$ belongs to $A \otimes A$, and is the splitting idempotent of $A$, we see that its image in $B \otimes_A B$ is $1 \otimes \mu_A(\gamma)=1$, as required.
\end{proof}

\begin{lemma} \label{lem:etale-sub-2}
Let $B$ be an \'etale algebra, and let $A$ be any subalgebra of $B$. Then $\gamma(A)$ is an E-idempotent of $B$.
\end{lemma}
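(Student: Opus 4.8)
The plan is to check the three defining conditions of an $E$-idempotent directly for $\gamma := \gamma(A)$. Throughout I will use that $B$, $B \otimes B$ and $B^{\otimes 3}$ are all \'etale (Proposition~\ref{prop:trace-sum}), so that an idempotent of $\Gamma(B \otimes B)$ or of $\Gamma(B^{\otimes 3})$ is uniquely determined by the ideal it generates, or equivalently by its kernel (the complementary principal ideal), by Corollary~\ref{cor:rigid-mod-1}. Each condition will thereby be reduced to a containment or equality of ideals. Recall the set-up: $q \colon B \otimes B \to B \otimes_A B$ is the surjective quotient map, $I(A) = \ker(q)$, and $\gamma$ is characterized by $\ker(\gamma) = I(A)$ (so $q$ identifies $B \otimes_A B$ with the summand $\gamma \cdot (B \otimes B)$). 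From the coequalizer presentation of $\otimes_A$, the ideal $I(A)$ is generated by the image of the map $A \to B \otimes B$ sending a point $a$ of $A$ to $a \otimes 1 - 1 \otimes a$ (writing $a$ also for its image under $A \hookrightarrow B$).

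For condition~(a): the multiplication map $B \otimes_A B \to B \otimes_B B = B$ composed with $q$ is $\mu_B$, so $I(A) = \ker(q) \subseteq \ker(\mu_B)$; and $\ker(\mu_B) = \ker(\alpha)$, since multiplication by $\alpha = \delta(1)$ on $B \otimes B$ equals $\delta \circ \mu_B$ and $\delta = \mu_B^{\vee}$ is injective, being split by $\epsilon \otimes \id$ (Proposition~\ref{prop:mu-split-etale}). Thus $\ker(\gamma) \subseteq \ker(\alpha)$, i.e. $\im(\alpha) \subseteq \im(\gamma)$, i.e. $\alpha \gamma = \alpha$. For condition~(b): the symmetry $\tau$ of $B \otimes B$ is an algebra automorphism (commutativity of $B$) that carries $I(A)$ onto itself, because $q$ intertwines $\tau$ with the symmetry of $B \otimes_A B$ in $\Mod_A$; hence $\tau(\gamma)$ has kernel $\tau(I(A)) = I(A) = \ker(\gamma)$, and so $\tau(\gamma) = \gamma$ by uniqueness.

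Condition~(c) is the crux. In $B^{\otimes 3}$, the idempotent $\gamma_{i,j}$ has kernel the ideal ``$I(A)$ in the $i,j$ slots'' --- e.g. $\ker(\gamma_{1,2}) = I(A) \otimes B$, using that $-\otimes B$ is exact and $\gamma_{1,2} = \gamma \otimes \eta_B$ --- so by the generating description above, $\ker(\gamma_{1,2})$ is generated by the points $a \otimes 1 \otimes 1 - 1 \otimes a \otimes 1$, and $\ker(\gamma_{2,3})$, $\ker(\gamma_{1,3})$ similarly with the generators placed in the appropriate slots. The identity
\begin{displaymath}
a \otimes 1 \otimes 1 - 1 \otimes 1 \otimes a = \big(a \otimes 1 \otimes 1 - 1 \otimes a \otimes 1\big) + \big(1 \otimes a \otimes 1 - 1 \otimes 1 \otimes a\big)
\end{displaymath}
shows that the generators of $\ker(\gamma_{1,3})$ lie in $\ker(\gamma_{1,2}) + \ker(\gamma_{2,3})$; rewriting the identity to isolate each of the other two terms gives the remaining two containments. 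Hence the three ideals $\ker(\gamma_{1,2}) + \ker(\gamma_{2,3})$, $\ker(\gamma_{1,2}) + \ker(\gamma_{1,3})$, $\ker(\gamma_{1,3}) + \ker(\gamma_{2,3})$ all coincide. Since $\ker(ef) = \ker(e) + \ker(f)$ for idempotents $e, f$ (from $1 - ef = (1-e) + e(1-f)$ together with $(1-e)(1-ef) = 1 - e$), we conclude $\ker(\gamma_{1,2}\gamma_{2,3}) = \ker(\gamma_{1,2}\gamma_{1,3}) = \ker(\gamma_{1,3}\gamma_{2,3})$, and therefore $\gamma_{1,2}\gamma_{2,3} = \gamma_{1,2}\gamma_{1,3} = \gamma_{1,3}\gamma_{2,3}$ by uniqueness of idempotents.

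I expect the bookkeeping in~(c) to be the only real obstacle: one must interpret ``$\ker$'' consistently as the complementary principal ideal inside the \'etale algebra $B^{\otimes 3}$, so that the idempotent--ideal dictionary of Corollary~\ref{cor:rigid-mod-1} legitimately applies, and one must justify that each $\gamma_{i,j}$ has the claimed kernel (which reduces to exactness of $-\otimes B$ and $B \otimes -$ and the explicit formula $\gamma_{i,j} = \gamma$ in slots $i,j$ and $\eta_B$ elsewhere). Conditions~(a) and~(b) are essentially formal once one notices that $\gamma$ is completely controlled by the ideal $I(A)$.
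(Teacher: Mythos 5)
Your proof is correct and follows essentially the same route as the paper: each of the three conditions is verified through the idempotent--ideal dictionary of Corollary~\ref{cor:rigid-mod-1} applied to the \'etale algebras $B^{\otimes 2}$ and $B^{\otimes 3}$, and the crucial condition~(c) rests on the very same telescoping identity $a \otimes 1 \otimes 1 - 1 \otimes 1 \otimes a = (a \otimes 1 \otimes 1 - 1 \otimes a \otimes 1) + (1 \otimes a \otimes 1 - 1 \otimes 1 \otimes a)$ among the generators of the kernels. The only cosmetic differences are that the paper checks (a) by observing directly that $\alpha$ kills the generators $x \otimes 1 - 1 \otimes x$ (rather than via the factorization of $\mu_B$ through $B \otimes_A B$), and organizes (c) via the partial order on idempotents ($\gamma_{i,j}\gamma_{j,k} \le \gamma_{i,k}$, then multiply) instead of your $\ker(ef) = \ker(e) + \ker(f)$ bookkeeping.
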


\begin{proof}
Since $I(A)$ is symmetrical, it follows that $\gamma$ is. Let $\alpha$ be the splitting idempotent for $B$. Since $\alpha \cdot (x \otimes 1 - 1 \otimes x)=0$ and $\im(1-\gamma)$ is the ideal generated by $x \otimes 1 - 1 \otimes x$ with $x$ a point of $A$, we have $\alpha(1-\gamma)=0$, which shows $\alpha \le \gamma$. We now verify the third condition. We first claim that $\gamma_{1,2} \gamma_{2,3} \le \gamma_{1,3}$. It is equivalent to show $\ker(\gamma_{1,3}) \subset \ker(\gamma_{1,2} \gamma_{2,3})$. A typically point of $\ker(\gamma_{1,3})$ has the form
\begin{displaymath}
x \otimes 1 \otimes 1 - 1 \otimes 1 \otimes x.
\end{displaymath}
This is equal to
\begin{displaymath}
(x \otimes 1 \otimes 1 - 1 \otimes x \otimes 1) + (1 \otimes x \otimes 1 - 1 \otimes 1 \otimes x).
\end{displaymath}
The first term above belongs to $\ker(\gamma_{1,2})$, while the second belongs to $\ker(\gamma_{2,3})$. Since $\ker(\gamma_{1,2}\gamma_{2,3})$ both contains $\ker(\gamma_{1,2})$ and $\ker(\gamma_{2,3})$, the claim follows. Now, by symmetry, we have $\gamma_{i,j} \gamma_{j,k} \le \gamma_{i,k}$ whenever $\{i,j,k\}=\{1,2,3\}$. Multiplying by $\gamma_{i,j}$, we find $\gamma_{i,j} \gamma_{j,k} \le \gamma_{i,j} \gamma_{i,k}$. The reverse inequality holds by symmetry, and so the two sides are equal. We have thus shown that $\gamma$ is an E-idempotent.
\end{proof}

\begin{proof}[Proof of Proposition~\ref{prop:etale-sub}]
Let $\gamma$ be an E-idempotent of $B$, and put $A=B^{\gamma}$. Then $A$ is \'etale (Lemma~\ref{lem:etale-sub-1}). We now show that $\gamma=\gamma(A)$. For this, it is enough to show that the two idempotents have the same kernel. By definition, we have $\ker(\gamma(A))=I(A)$ by definition, and $\ker(\gamma)=I(A)$ by Lemma~\ref{lem:etale-sub-1}.

Now let $A$ be an \'etale subalgebra of $B$ and let $\gamma=\gamma(A)$. Then $\gamma$ is an E-idempotent of $B$ (Lemma~\ref{lem:etale-sub-2}). We claim that $A=B^{\gamma}$. This is equivalent to showing exactness of the sequence
\begin{displaymath}
\xymatrix{
A \ar[r] & B \ar[r] & B \otimes_A B, }
\end{displaymath}
where the second map is $x \otimes 1 - 1 \otimes x$; indeed, the kernel of this map is $B^{\gamma}$ by definition. This is simply flat descent; we recall the argument. First, $B$ is flat over $A$ since $\Mod_A$ is rigid (Proposition~\ref{prop:rigid-mod}), and faithfully flat since $A \to B$ is injective (Corollary~\ref{cor:ff-1}). Thus we can verify exactness of the above sequence after applying $B \otimes_A -$. This yields the sequence
\begin{displaymath}
\xymatrix{
B \ar[r]^-{d_1} & B \otimes_A B \ar[r]^-{d_2} & B \otimes_A B \otimes_A B, }
\end{displaymath}
where
\begin{displaymath}
d_1(x)=x \otimes 1, \qquad d_2(x \otimes y) = x \otimes y \otimes 1 - x \otimes 1 \otimes y.
\end{displaymath}
Define maps $s_1$ and $s_2$ in the opposite direction of $d_1$ and $d_2$ by
\begin{displaymath}
d_1(x \otimes y) = xy, \qquad d_2(x \otimes y \otimes z)=xy \otimes z.
\end{displaymath}
Then $d_1s_1-s_2d_2$ is the identity on $B \otimes_A B$, and so $\ker(d_2)=\im(d_1)$, as required.
\end{proof}

\begin{question}
Are all subalgebras of an \'etale algebra themselves \'etale?
\end{question}

\section{The oligomorphic fundamental group} \label{s:FrobB}

\subsection{The main theorem}

Let $\cC$ be a pre-Tannakian category. Recall that $\Et(\cC)$ is the category of \'etale algebras in $\cC$. We define $\bS(\cC)$ to be the opposite category to $\Et(\cC)$. Following Deligne \cite[\S 7.8]{Deligne1}, we think of $\bS(\cC)$ as the category of \'etale affine schemes in $\cC$. For an \'etale algebra $A$, we write $\Sp(A)$ for the corresponding object of $\bS(\cC)$. The following is the key technical theorem in this paper, and proved in \S \ref{ss:frobpf} below.

\begin{theorem} \label{thm:frob}
The category $\bS(\cC)$ is pre-Galois.
\end{theorem}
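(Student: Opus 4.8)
The plan is to verify, one at a time, the eight conditions (a)--(h) of Definition~\ref{defn:bcat} for $\bS(\cC)=\Et(\cC)^{\op}$, by dualizing each into a statement about \'etale algebras in $\cC$. First I would record the dictionary: finite co-products in $\bS(\cC)$ are finite direct sums of \'etale algebras, the initial object $\bzero$ is the zero algebra, the final object $\bone$ is $\Sp(\bbone)$, a fiber product $\Sp(B)\times_{\Sp(A)}\Sp(C)$ is $\Sp(B\otimes_A C)$, and the atoms of $\bS(\cC)$ are exactly the $\Sp(A)$ with $A$ an atomic \'etale algebra. With this in hand, six axioms are essentially immediate: (a) holds because a finite direct sum of \'etale algebras is \'etale (Proposition~\ref{prop:trace-sum}) and $0$ is \'etale; (b) is Corollary~\ref{cor:rigid-mod-2}(b); (d) holds because $B\otimes_A C$ is \'etale, so that it really computes the fiber product in $\bS(\cC)$, and $\bbone$ is \'etale (Corollary~\ref{cor:et-fiber}); (f) is the last sentence of Corollary~\ref{cor:et-fiber}; (g) holds because $\Gamma(\bbone)=\End(\bbone)=k$ is a field, so $\bbone$ is atomic; and for (c) one notes that a morphism $B\oplus B'\to A$ with $A$ atomic sends the idempotent $(1,0)$ to an idempotent of the field $\Gamma(A)$, hence to $0$ or to $1$, and therefore factors uniquely through exactly one of the two projections, which gives the identification required by (c).

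The one input not already isolated in \S\ref{s:etale2} is a description of monomorphisms in $\bS(\cC)$: an epimorphism $B\to D$ of \'etale algebras is, up to isomorphism of the target, the quotient of $B$ by an ideal, so that subobjects of $\Sp(B)$ in $\bS(\cC)$ correspond to idempotents of $\Gamma(B)$ via Corollary~\ref{cor:rigid-mod-1}. To see this, replace $D$ by the image subalgebra $\ol{\im}(B\to D)$, which is \'etale as a quotient of $B$ (Corollary~\ref{cor:rigid-mod-1}(c)); this reduces us to an injective epimorphism $B\hookrightarrow D$, and then $D$ is flat over $B$ since $\Mod_B$ is rigid (Proposition~\ref{prop:rigid-mod}) and faithfully flat since $B\to D$ is injective (Corollary~\ref{cor:ff-1}), so the descent sequence $B\to D\rightrightarrows D\otimes_B D$ is exact; as $B\to D$ is epi, the two maps $D\rightrightarrows D\otimes_B D$ coincide, forcing $B=D$. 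The same observation, that a nonzero map out of an atomic algebra is injective, then yields axiom (e): a monomorphism of atoms $\Sp(D)\to\Sp(B)$ is a surjection $B\twoheadrightarrow D$ of atomic \'etale algebras, necessarily injective, hence an isomorphism.

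The main obstacle is axiom (h), effectivity of equivalence relations, and the plan is to route it through the classification of \'etale subalgebras in \S\ref{ss:et-sub}. Given $\Sp(B)$, a subobject of $\Sp(B)\times\Sp(B)=\Sp(B\otimes B)$ is an idempotent $\gamma\in\Gamma(B\otimes B)$ by the previous paragraph; I would then check that reflexivity, symmetry, and transitivity of the corresponding relation translate precisely into the three conditions defining an E-idempotent of $B$ (reflexivity into $\alpha\gamma=\alpha$, using $\ker\mu_B=(1-\alpha)(B\otimes B)$; symmetry into $\tau(\gamma)=\gamma$; transitivity into $\gamma_{1,2}\gamma_{2,3}=\gamma_{1,2}\gamma_{1,3}=\gamma_{1,3}\gamma_{2,3}$, the computation being essentially that of Lemma~\ref{lem:etale-sub-2}). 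By Proposition~\ref{prop:etale-sub} this $\gamma$ equals $\gamma(A)$ for a unique \'etale subalgebra $A=B^{\gamma}$, and $B\otimes_A B$ is the summand $\gamma\cdot(B\otimes B)$. It then remains to see that $\Sp(B)\to\Sp(A)$ is a coequalizer of the equivalence relation and that the relation is its kernel pair; both statements unwind to exactness of $A\to B\rightrightarrows B\otimes_A B$, which is exactly the faithfully flat descent already carried out inside the proof of Proposition~\ref{prop:etale-sub}. Besides (h), the only remaining point is to note that $\bS(\cC)$ is essentially small, which one checks directly. I expect the bookkeeping in (h) --- matching the categorical notion of equivalence relation with E-idempotents, and confirming that the coequalizer is the desired one --- to be the delicate part; everything else is a fairly mechanical transcription of \S\ref{s:etale2}.
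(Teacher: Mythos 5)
Your proposal is correct and its skeleton coincides with the paper's proof in \S\ref{ss:frobpf}: the same dictionary between $\Et(\cC)$ and $\bS(\cC)$, the same inputs for (a)--(d), (f), (g) (Propositions~\ref{prop:trace-sum}, Corollaries~\ref{cor:rigid-mod-2} and~\ref{cor:et-fiber}), and the same strategy for (h), namely translating the equivalence-relation axioms into the E-idempotent conditions and invoking Proposition~\ref{prop:etale-sub} for the coequalizer/kernel-pair verification. The one genuine divergence is your handling of monomorphisms and subobjects. For (e) the paper argues via the kernel pair: a mono $\Sp(B)\to\Sp(A)$ of atoms makes $p_1\colon \Sp(B)\times_{\Sp(A)}\Sp(B)\to\Sp(B)$ an isomorphism, i.e.\ $B\otimes_A A\to B\otimes_A B$ is an isomorphism, and faithful flatness of $B$ over $A$ then forces $A\to B$ to be an isomorphism; and for the subobject--idempotent correspondence needed in (h) the paper simply cites the fact that in a $\rB$-category subobjects are unions of atoms (a result of \cite{bcat}). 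You instead prove a self-contained lemma that an epimorphism in $\Et(\cC)$ is surjective up to isomorphism of the target (image factorization, Corollary~\ref{cor:rigid-mod-1}(c), faithful flatness via Proposition~\ref{prop:rigid-mod} and Corollary~\ref{cor:ff-1}, and descent), and you use it both to get (e) and to identify subobjects of $\Sp(A)$ with idempotents of $\Gamma(A)$; this argument is sound and buys independence from the external citation, at the cost of redoing a descent argument the paper only runs once, inside Proposition~\ref{prop:etale-sub}. One small misattribution: for the transitivity condition the paper does not redo the computation of Lemma~\ref{lem:etale-sub-2}; it deduces the triple-intersection identity $R_{1,2}\cap R_{2,3}=R_{1,3}\cap R_{2,3}=R_{1,2}\cap R_{1,3}$ directly from the relation axioms and then reads off $\gamma_{1,2}\gamma_{2,3}=\gamma_{1,3}\gamma_{2,3}=\gamma_{1,2}\gamma_{1,3}$ via the correspondence between intersections of subobjects and products of idempotents --- this is the cleaner way to do the translation you flag as the delicate step.
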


Combining this result with Theorem~\ref{thm:pregal}, we can now introduce the following invariant:

\begin{definition}
Let $\cC$ be a pre-Tannakian category. We define the \defn{oligomorphic fundamental group} of $\cC$, denoted $\pi^{\rm olig}(\cC)$, to be an admissible group $G$ such that $\bS(\cC) \cong \bS(G)$.
\end{definition}

We note that the group $G$ is not unique; see \S \ref{ss:pre-gal}. See \S \ref{ss:fundex} for some examples.

\begin{remark}
The category $\bS(\cC)$ can also be described as the category of Frobenius algebras in $\cC$, with morphisms being co-algebra homomorphisms.
\end{remark}

\subsection{Proof of Theorem~\ref{thm:frob}} \label{ss:frobpf}

We verify each condition in Definition~\ref{defn:bcat}.

(a) We have seen that $\Et(\cC)$ has finite products: if $A$ and $B$ are \'etale algebras then their product is the \'etale algebra $A \oplus B$. It follows that $\bS(\cC)$ has finite co-products, via
\begin{displaymath}
\Sp(A) \amalg \Sp(B) = \Sp(A \oplus B).
\end{displaymath}

(b) It is clear that $\Sp(A)$ is an atomic object of $\bS(\cC)$ if and only if $A$ is an atomic algebra. By Corollary~\ref{cor:rigid-mod-2}, every \'etale algebra is a finite product of atomic \'etale algebras. Thus every object of $\bS(\cC)$ is a finite co-product of atoms.

(c) Let $A$, $B$, and $C$ be \'etale algebras, with $A$ atomic. We show that the map
\begin{displaymath}
\Hom(\Sp(A), \Sp(B)) \amalg \Hom(\Sp(A), \Sp(C)) \to \Hom(\Sp(A), \Sp(B) \amalg \Sp(C))
\end{displaymath}
bijective. Equivalently, we must show that
\begin{displaymath}
\Hom(B, A) \amalg \Hom(C, A) \to \Hom(B \oplus C, A)
\end{displaymath}
is bijective, where the $\Hom$'s are taken in the category $\Et(\cC)$. It is clearly injective. Let $f \colon B \oplus C \to A$ be a given algebra map. Let $e \in \Gamma(B)$ and $e' \in \Gamma(C)$ be the unit elements. These are orthogonal idempotents summing to~1 in $\Gamma(B \oplus C)$. It follows that $f(e)$ and $f(e')$ are orthogonal idempotents of $\Gamma(C)$ summing to~1. Since $C$ is atomic, $\Gamma(C)$ is a field, and so $f(e)=0$ or $f(e')=0$; say the latter. Then $e'$ belongs to the kernel of $f$, and so all of $C$ is contained in the kernel of $f$. Hence $f$ is in the image of $\Hom(B,A)$ under the above map, which completes the proof.

(d) If $A \to B$ and $A \to C$ are maps of algebras in $\cC$ then $B \otimes_A C$ is the push-out in $\Alg(\cC)$. If $A$, $B$, and $C$ are \'etale then so is $B \otimes_A C$ (Corollary~\ref{cor:et-fiber}), and so it is the push-out in the category $\Et(\cC)$. Thus $\Et(\cC)$ has push-outs, and so $\bS(\cC)$ has fiber products. Explicitly,
\begin{displaymath}
\Sp(B) \times_{\Sp(A)} \Sp(C) = \Sp(B \otimes_A C).
\end{displaymath}

(e) Let $\Sp(f) \colon \Sp(B) \to \Sp(A)$ be a monomorphism of atomic objects of $\bS(\cC)$. We claim that $f$ is an isomorphism. Since $\Sp(f)$ is a monomorphism, the map
\begin{displaymath}
p_1 \colon \Sp(B) \times_{\Sp(A)} \Sp(B) \to \Sp(B)
\end{displaymath}
is an isomorphism, where $p_1$ is the projection onto the first factor. Translating to algebras, this says that the map
\begin{displaymath}
\id \otimes f \colon B \otimes_A A \to B \otimes_A B
\end{displaymath}
is an isomorphism. Since $A$ is atomic, $f \colon A \to B$ is injective (Corollary~\ref{cor:rigid-mod-1}), and so $B$ is faithfully flat over $A$ (Corollary~\ref{cor:ff-1}). Since $f$ becomes an isomorphism after applying $B \otimes_A -$, it follows that $f$ is an isomorphism.

(f) Consider maps $\Sp(B) \to \Sp(A)$ and $\Sp(C) \to \Sp(A)$, where $\Sp(B)$ and $\Sp(C)$ are non-empty and $\Sp(A)$ is atomic. Thus $B$ and $C$ are non-zero, and $A$ is atomic. We have
\begin{displaymath}
\Sp(B) \times_{\Sp(A)} \Sp(C) = \Sp(B \otimes_A C).
\end{displaymath}
Since $B \otimes_A C$ is non-zero (Corollary~\ref{cor:et-fiber}), it follows that the above fiber product is non-empty.

(g) We have already remarked that $\bbone$ is the initial object of $\Et(\cC)$, and so $\Sp(\bbone)$ is the final object of $\bS(\cC)$. Since $\cC$ is pre-Tannakian, $\bbone$ is simple (Corollary~\ref{cor:pretan-unit}), and thus atomic. Hence $\Sp(\bbone)$ is also atomic.

(h) Suppose that $R$ is an equivalence relation on $\Sp(B)$ in the category $\bS(\cC)$, i.e., $R$ is a subobject of $\Sp(B \otimes B)$ satisfying the equivalence relation axioms.

We first make a general observation. Let $\Sp(A)$ be an object of $\bS(\cC)$. A subobject of $\Sp(A)$ is a union of some of its atoms \cite[Corollary~3.14]{bcat}, and thus of the form $\Sp(\gamma \cdot A)$ for some idempotent $\gamma \in \Gamma(A)$. This establishes an order-preserving bijection between idempotents of $\Gamma(A)$ and subobjects of $\Sp(A)$, where here idempotents are ordered in the usual manner. In particular, if $X_1$ and $X_2$ are subobjects corresponding to idempotents $\gamma_1$ and $\gamma_2$ then $X_1 \cap X_2$ corresponds to $\gamma_1 \gamma_2$.

Now, let $\gamma$ be the idempotent of $\Gamma(B \otimes B)$ corresponding to $R$. We claim that $\gamma$ is an E-idempotent (see \S \ref{ss:et-sub}). The diagonal of $B \otimes B$ corresponds to the splitting idempotent $\alpha \in \Gamma(B \otimes B)$; since $R$ contains the diagonal, we thus have $\alpha \le \gamma$, i.e., $\alpha \gamma = \alpha$. Since $R$ is symmetrical, so is $\gamma$. Finally, in the obvious notation, we have $R_{1,2} \cap R_{2,3} = R_{1,3} \cap R_{2,3}=R_{1,2} \cap R_{1,3}$, which yields $\gamma_{1,2} \gamma_{2,3} = \gamma_{1,3} \gamma_{2,3} = \gamma_{1,2} \gamma_{1,3}$. This establishes the claim.

Let $A=B^{\gamma}$. We claim that $\Sp(B) \to \Sp(A)$ is the co-equalizer of the maps $R \rightrightarrows B$. Indeed, suppose $\Sp(B) \to \Sp(A')$ is some map that co-equalizes the two arrows. This exactly means that the two compositions
\begin{displaymath}
A' \to B \rightrightarrows \gamma \cdot (B \otimes B)
\end{displaymath}
are equal, where the two right maps send $x$ to $x \otimes 1$ and $1 \otimes x$. Thus $A'$ maps into the kernel of $x \mapsto \gamma (x \otimes 1 - 1 \otimes x)$, which is $A$ by definition. Since $A \to B$ is an injection, the factorization of $A'$ through $A$ is unique. This establishes the claim.

Finally, we claim that $R$ is the kernel pair of the map $\Sp(B) \to \Sp(A)$. This exactly means that $R=B \otimes_A B$, which follows from the relation $\gamma=\gamma(A)$ proved in Proposition~\ref{prop:etale-sub}. We thus see that the equivalence relation $R$ is effective.

\subsection{Examples} \label{ss:fundex}

We now give a number of examples of oligomorphic fundamental groups. We assume $k$ is algebraically closed of characteristic~0 in the following examples.

\textit{(a) Vector spaces.} Let $\cC=\Vec^{\rf}$ be the category of finite dimensional $k$-vector spaces. Then $\bS(\cC)$ is the category of finite \'etale schemes over $k$, which is equivalent to the category of finite sets. This is just $\bS(G)$ where $G$ is the trivial group, and so $\pi^{\rm olig}(\cC)=1$.

\textit{(b) Algebraic groups.} Let $G$ be an algebraic group over $k$, and let $\cC=\Rep(G)$ be the category of finite dimensional algebraic representations of $k$. Then $\bS(\cC)$ is the category of finite \'etale schemes over $k$ equipped with an algebraic action of $G$. Any such action factors through $\pi_0(G)$, and so $\bS(\cC)$ is equivalent to the category of finite $\pi_0(G)$-sets. Thus $\pi^{\rm olig}(\cC)=\pi_0(G)$.

\textit{(c) Deligne's $\fS_t$ category.} Let $\cC$ be the abelian version of Deligne's category $\uRep(\fS_t)$ \cite{Deligne3}, where $t \in k$, which is a pre-Tannakian category. This category is equivalent to $\uRep^{\rf}(\fS; \mu_t)$, where $\fS$ is the infinite symmetric group and $\mu_t$ is a particular measure on it. After possibly extending scalars, one can realize $\cC$ as an ultraproduct of ordinary representation categories $\Rep(\fS_n)$ (where both $n$ and the coefficient fields vary) \cite{Harman2}. From this perspective, an \'etale algebra in $\uRep(\fS_t)$ is the ultralimit of \'etale algebras in $\Rep(\fS_n)$'s. It is not difficult to classify these, and from this one finds that the $\cC(X)$'s are the only \'etale algebras in $\cC$; see \cite[Proposition~2.2]{Harman2} for details. Thus $\bS(\cC)$ is equivalent to the category $\bS(\fS)$ of finitary $\fS$-sets, and so $\pi^{\rm olig}(\cC)=\fS$. A similar approach works in other instances of Deligne interpolation.

\textit{(d) Deligne's $\GL_t$ category.} Deligne also defined an interpolation category $\cC=\uRep(\GL_t)$ for the general linear groups. We have $\pi^{\rm olig}(\cC)=1$ in this case, which can be seen using the ultraproduct approach from (c).

\textit{(e) The wreath product $\GL_u \wr \fS_t$.} One can construct a category $\uRep(\GL_u \wr \fS_t)$ associated to the ``wreath product'' of $\GL_u$ and $\fS_t$; see \cite{Mori}. The oligomorphic fundamental group is the infinite symmetric group $\fS$; this again follows from the ultraproduct perspective.

\textit{(f) Super vector spaces.} In a super-commutative algebra any odd element is nilpotent and therefore in the radical of the trace form. Therefore any \'etale algebra must be purely even, and we are back in the vector space case; thus $\pi^{\rm olig}(\mathrm{sVec})=1$.

\textit{(g) The Verlinde category.} Let $\mathrm{Ver}_p$ be the Verlinde category (see \cite[\S 3.2]{Ostrik}). This is a semi-simple pre-Tannakian category with $p-1$ simple objects $L_1, \ldots, L_{p-1}$, where $L_1=\bbone$. For $i \ne 1$, the simple $L_i$ is killed by a symmetric power (see \cite[Proposition~3.1]{Venkatesh}). Thus in a commutative algebra, any copy of $L_i$ with $i \ne 1$ belongs to the radical. Hence an \'etale algebra can only contain the simple $L_1$, and so $\pi^{\rm olig}(\mathrm{Ver}_p)=1$.

\subsection{The identity component} \label{ss:idcomp}

Suppose $G$ is an algebraic group. One can think of $G$ as being built of two pieces: its component group $\pi_0(G)$ and its identity component $G^{\circ}$. As we saw above, we can recover $\pi_0(G)$ from $\Rep(G)$ as the the oligomorphic fundamental group. It is therefore natural to ask if there is a general construction for pre-Tannakian categories that recovers $\Rep(G^{\circ})$ from $\Rep(G)$.

In fact, there is such a construction, though we have not been able to establish nice properties of it in general. Let $\cC$ be a pre-Tannakian category and let $H=\pi^{\rm olig}(\cC)$. For an open subgroup $U$ of $H$, define $\cC(U)$ to be the category of modules over the atomic \'etale algebra corresponding to $H/U$; this is pre-Tannakian by Proposition~\ref{prop:pretan-mod}. Now define $\hat{\cC}$ to be the 2-colimit of the categories $\cC(U)$ over the poset of open subgroups.

If $\cC=\Rep(G)$ for an algebraic group $G$ then $\hat{\cC}$ is indeed $\Rep(G^{\circ})$. Unfortunately, for a general pre-Tannakian category, $\hat{\cC}$ will not itself be pre-Tannakian (though it is always rigid); indeed, if $\cC=\uRep^{\rf}(G;\mu)$ then $\hat{\cC}=\uRep^{\rf}(\hat{G};\mu)$ (see Remark~\ref{rmk:rep-Ghat}). One might hope that the category of finite length objects in $\hat{\cC}$ forms a pre-Tannakian category. In this case, this category might reasonably be called the ``identity component'' of $\cC$. However, we have not been able to prove this.

\section{The measure on the oligomorphic fundamental group} \label{s:meas}

\subsection{Linearizations}

The goal of \S \ref{s:meas} is to define a measure on the oligomorphic fundamental group. To do this, we will appeal to the theory of linearizations in \cite{repst}. We now recall this theory. We fix a $\rB$-category $\cB$ in what follows, though we are really only interested in the case $\cB=\bS(G)$ for an admissible group $G$. We let $\cB^{\circ}$ be the category with the same objects as $\cB$ and where the morphisms are isomorphisms in $\cB$. We have a natural identification of $\cB^{\circ}$ with its opposite category.

A \defn{symmetric monoidal balanced functor} $\cB \to \cT$ is a pair $(\Psi, \Psi')$ of symmetric monoidal functors $\Psi \colon \cB \to \cT$ and $\Psi' \colon \cB^{\op} \to \cT$ such that $\Psi$ and $\Psi'$ have equal restriction to $\cB^{\circ}=(\cB^{\circ})^{\op}$; here $\cB$ carries the cartesian symmetric monoidal structure. We think of this as a rule that assigns to each object $X$ of $\cB$ an object $\Psi(X)$ of $\cT$, and to each morphism $f \colon X \to Y$ in $\cB$ morphisms $\alpha_f \colon \Psi(X) \to \Psi(Y)$ and $\beta_f \colon \Psi(Y) \to \Psi(X)$; here $\alpha_f=\Psi(f)$ and $\beta_f=\Psi'(f)$. We typically just write $\Psi$ in place of $(\Psi, \Psi')$.

Suppose we have a symmetric monoidal balanced functor $\Psi$. We introduce two conditions on $\Psi$. We say that $\Psi$ is \defn{additive} if
\begin{displaymath}
\Psi(X \amalg Y)=\Psi(X) \oplus \Psi(Y)
\end{displaymath}
in the natural manner, for all objects $X$ and $Y$ of $\cB$. To be precise, let $i \colon X \to X \amalg Y$ and $j \colon Y \to X \amalg Y$ be the natural maps. Then we demand
\begin{displaymath}
\beta_i \alpha_i = \id_{\Psi(X)}, \quad
\beta_i \alpha_j = 0, \quad
\beta_j \alpha_i = 0, \quad
\beta_j \alpha_j = \id_{\Psi(Y)}
\end{displaymath}
\begin{displaymath}
\alpha_i \beta_i + \alpha_j \beta_j = \id_{\Psi(X \amalg Y)}.
\end{displaymath}
For an object $X$ of $\cB$, we let $\alpha_X \colon \Psi(X) \to \bbone$ and $\beta_X \colon \bbone \to \Psi(X)$ be the maps induced by the natural map $X \to \bone$, where $\bone$ is the final object of $\cB$. We say that $\Psi$ is \defn{plenary} if for every atom $X$ of $\cB$ the space $\Hom(\Psi(X), \bbone)$ is one-dimensional and spanned by $\alpha_X$. We can now introduce the following important concept:

\begin{definition}
A \defn{linearization} of a $\rB$-category $\cB$ is a pair $(\cT, \Psi)$ where $\cT$ is a tensor category and $\Psi \colon \cB \to \cT$ is a symmetric monoidal balanced functor that is additive, plenary, and essentially surjective.
\end{definition}

If $\mu$ is a measure for $\cB$ then there is a natural linearization $\Psi_{\mu} \colon \cB \to \uPerm(\cB; \mu)$ given by $\Psi_{\mu}(X)=\Vec_X$ and $\alpha_f=A_f$ and $\beta_f=B_f$. The notion of measure in this context is discussed after Definition~\ref{defn:meas}, and the notation is as in \S \ref{ss:perm}. In fact, we proved \cite[Theorem~9.9]{repst} that these account for all linearizations:

\begin{theorem} \label{thm:linear}
Let $\Psi \colon \cB \to \cT$ be a linearization. Then there exists a $k$-valued measure $\mu$ for $\cB$ and an equivalence of tensor categories $\Phi \colon \uPerm(\cB; \mu) \to \cT$ such that $\Psi=\Phi \circ \Psi_{\mu}$ (actual equality). Both $\mu$ and $\Phi$ are unique.
\end{theorem}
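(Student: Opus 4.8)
The plan is to read the measure $\mu$ off from the scalars that $\Psi$ produces when an $\alpha$ and a $\beta$ map are run across an atom, then to build the equivalence $\Phi$ by hand on the natural basis of each $\Hom$-space of $\uPerm(\cB;\mu)$, and finally to close the argument with a dimension count.

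\textbf{Step 1: extracting the measure.}
First I would observe that the balanced functor makes each $\Psi(X)$ a Frobenius algebra in $\cT$: the cocommutative comonoid structure on an object $X$ of $\cB$ (its diagonal $\Delta_X$ and its map to $\bone$) maps under $\Psi$ to the comultiplication $\alpha_{\Delta_X}$ and counit $\alpha_X$, and under $\Psi'$ to the multiplication $\beta_{\Delta_X}$ and unit $\beta_X$; the Frobenius and special relations of Definition~\ref{defn:frob} are then the images of suitable pullback and associativity squares in $\cB$ under a base-change identity for the balanced functor (which I would establish first, from additivity together with the structure of fiber products of atoms in a $\rB$-category). By Proposition~\ref{prop:et-frob} each $\Psi(X)$ is therefore \'etale, in particular rigid and self-dual. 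Now for a morphism $f\colon X\to Y$ with $Y$ an atom, the same base-change identity, applied to the pullback of $f$ along the first projection $Y\times Y\to Y$, yields the projection formula and hence shows that $\alpha_f\circ\beta_f\colon\Psi(Y)\to\Psi(Y)$ is a morphism of $\Psi(Y)$-modules. Endomorphisms of the free rank-one $\Psi(Y)$-module are precisely $\Gamma(\Psi(Y))=\Hom_\cT(\bbone,\Psi(Y))$, which by plenarity and self-duality is one-dimensional when $Y$ is an atom, spanned by the unit; so $\alpha_f\circ\beta_f=\mu(f)\cdot\id_{\Psi(Y)}$ for a unique scalar $\mu(f)\in k$. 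Using the formulation of a measure on a $\rB$-category as a rule $f\mapsto\mu(f)$ (the discussion following Definition~\ref{defn:meas}), I would then verify the defining relations: isomorphism invariance and normalization ($\alpha_{\id}\beta_{\id}=\id$) are immediate from functoriality of $\Psi$; additivity is the additivity of $\Psi$; and the multiplicativity/composition relation is another application of base change plus additivity. This produces a $k$-valued measure $\mu$ for $\cB$.

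\textbf{Step 2: building $\Phi$.}
Set $\Phi(\Vec_X)=\Psi(X)$. A $k$-basis of $\Hom_{\uPerm(\cB;\mu)}(\Vec_X,\Vec_Y)$ is indexed by the atoms $O$ occurring in a decomposition $X\times Y=\coprod_j O_j$, each carrying its two projections $p\colon O\to X$ and $q\colon O\to Y$; I would declare $\Phi$ to send the basis element of $O$ to the composite $\alpha_q\circ\beta_p\colon\Psi(X)\to\Psi(O)\to\Psi(Y)$, and extend $k$-linearly. Monoidality of $\Phi$ on objects is the identity $\Psi(X\times Y)=\Psi(X)\otimes\Psi(Y)$ (as $\Psi$ is symmetric monoidal and $\cB$ is cartesian), and on morphisms it reduces to compatibility of products of spans with $\Psi$. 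The real work is that $\Phi$ respects composition: the product in $\uPerm(\cB;\mu)$ of the basis elements for atoms $O_1$ (of $X\times Y$) and $O_2$ (of $Y\times Z$) is computed by integrating over $Y$, which rewrites it as a $\mu$-weighted sum of the atoms of $X\times Z$ occurring in $O_1\times_Y O_2$; on the other side, $\alpha_{q_2}\beta_{p_2}\alpha_{q_1}\beta_{p_1}$ is computed by applying the base-change identity to the fiber-product square defining $O_1\times_Y O_2$ and then decomposing $\Psi(O_1\times_Y O_2)$ over its atoms by additivity, which reproduces exactly the same $\mu$-weights read off in Step~1. This is the longest computation, and I would arrange it so that it contains the multiplicativity check of Step~1 as a special case.

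\textbf{Step 3: equivalence and uniqueness.}
By construction $\Psi=\Phi\circ\Psi_\mu$ on the nose, on objects and on morphisms, and $\Phi$ is essentially surjective because $\Psi$ is. For full faithfulness I would compute, using self-duality of $\Psi(X)$ and then additivity,
\[
\dim_k\Hom_\cT(\Psi(X),\Psi(Y))=\dim_k\Hom_\cT(\bbone,\Psi(X\times Y))=\sum_{j}\dim_k\Hom_\cT(\bbone,\Psi(O_j)),
\]
and each summand is $1$ by plenarity and self-duality; hence $\dim_k\Hom_\cT(\Psi(X),\Psi(Y))$ equals the number of atoms of $X\times Y$, which is $\dim_k\Hom_{\uPerm(\cB;\mu)}(\Vec_X,\Vec_Y)$. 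It then suffices to show $\Phi$ is surjective on $\Hom$-spaces: transporting $\Phi$ of the basis element of $O$ across the isomorphism $\Hom_\cT(\Psi(X),\Psi(Y))\cong\bigoplus_j\Hom_\cT(\bbone,\Psi(O_j))$, its $O$-component is a nonzero multiple of the generator (nonzero since pairing it against $\alpha_O$ gives $\alpha_O\beta_O\ne 0$, by Step~1), so these images span. Thus $\Phi$ is a tensor equivalence. For uniqueness: any tensor equivalence $\Phi'$ with $\Psi=\Phi'\circ\Psi_{\mu'}$ must agree with $\Phi$ on objects (as $\Psi_{\mu'}$ is bijective on objects) and on the span basis elements, hence $\Phi'=\Phi$; and $\mu'=\mu$ is then forced, since $\mu(f)$ is recovered from $\Psi$ via the relation $\alpha_f\circ\beta_f=\mu(f)\,\id$, which $\Phi'$ preserves.

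\textbf{Main obstacle.}
The hard part is Step~2, the compatibility of $\Phi$ with composition: this is where the combinatorics of fiber products and atomic decompositions in $\cB$ must be matched against matrix multiplication and the measure axioms, and a genuine computation there seems unavoidable. A secondary technical burden is the base-change identity for balanced functors underlying Steps~1 and~2 — it is what yields the Frobenius structure on $\Psi(X)$ and the module-linearity of $\alpha_f\circ\beta_f$.
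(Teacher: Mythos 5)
First, a point of context: this paper does not actually prove Theorem~\ref{thm:linear}; it is imported from \cite[Theorem~9.9]{repst}, so there is no internal proof to compare against. Judged on its own terms, your plan follows the route the paper itself gestures at: your extraction of $\mu$ from the scalar relation $\alpha_f\beta_f=\mu(f)\,\id_{\Psi(Y)}$ is equivalent (via $\beta_X=\beta_f\beta_Y$ and $\Psi(Y)$-linearity) to the recipe the paper quotes from \cite{repst}, namely $\alpha_f\beta_X=\mu(f)\beta_Y$, and the skeleton (Frobenius structure on $\Psi(X)$, span basis, dimension count) is the right one. Your instinct that the needed ``base-change'' identities are derivable is also correct, though you leave the mechanism implicit: the workable route is to factor an arbitrary morphism as its graph (a split mono, whose image is a union of atoms, so the relevant push--pull identity follows from additivity) followed by a product projection (where the identity is immediate from monoidality and bifunctoriality of $\otimes$), and then paste pullback squares. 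Spelling this out is the bulk of the work you defer in Steps~1 and~2, and it is genuinely nontrivial but not circular.

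There is, however, one concrete step that fails as written. In Step~3 you argue that the image of the span basis element for an atom $O\subseteq X\times Y$ has nonzero $O$-component because ``pairing it against $\alpha_O$ gives $\alpha_O\beta_O\neq 0$, by Step~1.'' Step~1 establishes no such non-vanishing, and indeed $\alpha_O\beta_O\in\End(\bbone)=k$ is precisely $\mu(O)$ (the measure of the atom), which can vanish: for example, for Deligne's $\uRep(\fS_t)$ at a nonnegative integer $t$ the measure of $\Omega^{[n]}$ is $t(t-1)\cdots(t-n+1)=0$ for $n>t$. So pairing against $\alpha_O$ cannot detect the component, and your spanning argument collapses at this point. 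The conclusion is nonetheless true and the fix is not hard: transport $\alpha_{q_O}\beta_{p_O}$ across the self-duality directly and compute, using the graph/projection base change and additivity, that it equals $\alpha_{\iota_O}\circ\beta_O$ with $\iota_O\colon O\hookrightarrow X\times Y$ the inclusion; its $O$-component is then exactly the generator $\beta_O$ (nonzero, since $\beta_O=0$ would force $\Psi(O)=0$, contradicting plenarity) and its other components vanish by additivity, so the images form a basis and full faithfulness follows from your dimension count. With that repair, and with the base-change identities actually carried out, your outline is a sound reconstruction of the cited proof.
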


The proof of Theorem~\ref{thm:linear} (specifically, the paragraph preceding \cite[Lemma~9.17]{repst}) shows exactly how to construct the measure $\mu$: given a morphism $f \colon X \to Y$ in $\cB$, with $Y$ an atom, $\mu(f)$ is the unique scalar such that $\alpha_f \beta_X = \mu(f) \beta_Y$.

\subsection{The main theorem}

We suppose our field $k$ is algebraically closed in what follows. Let $\cC$ be a pre-Tannakian category, and let $G=\pi^{\rm olig}(\cC)$ be its oligomorphic fundamental group. We identify $\bS(\cC)$ with $\bS(G)$ in what follows. For a finitary $G$-set, we let $k[X]$ be the corresponding \'etale algebra in $\cC$. For a morphism $f \colon X \to Y$ of finitary $G$-sets, we let $f^* \colon k[Y] \to k[X]$ be the corresponding algebra homomorphism, and we let $f_* \colon k[X] \to k[Y]$ be the dual of $f^*$.

Suppose $f \colon X \to Y$ is a morphism of $G$-sets, with $X$ finitary and $Y$ transitive. Then there is an induced map $f_* \colon \Gamma(k[X]) \to \Gamma(k[Y])$. Since $Y$ is atomic, $\Gamma(k[Y])=k$; indeed, $\Gamma(k[Y])$ is a finite extension field of $k$ by Proposition~\ref{prop:pretan-mod}, but $k$ is algebraically closed. We can thus regard $f_*(1)$ as an element of $k$. We put $\mu(f)=f_*(1)$.

\begin{theorem} \label{thm:meas}
The rule $\mu$ defined above is a measure on $G$. Furthermore, there is a natural fully faithful tensor functor
\begin{displaymath}
\Phi \colon \uPerm(G; \mu) \to \cC
\end{displaymath}
satisfying $\Phi(X)=k[X]$ and $\Phi(A_f)=f_*$ and $\Phi(B_f)=f^*$.
\end{theorem}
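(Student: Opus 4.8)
The plan is to recognize the desired functor as an instance of the linearization machinery of \cite{repst}, namely Theorem~\ref{thm:linear}, applied to the pre-Galois category $\bS(\cC)$, which we identify with $\bS(G)$ throughout. Let $\cD \subseteq \cC$ be the full subcategory spanned by the objects $k[X]$ for $X$ a finitary $G$-set. Since $k[X] \otimes k[Y] = k[X \times Y]$, $k[X] \oplus k[Y] = k[X \amalg Y]$, and $k[\bone] = \bbone$, the subcategory $\cD$ is closed under $\otimes$ and biproducts and contains the unit, so it is a $k$-linear tensor subcategory of $\cC$. Define a symmetric monoidal balanced functor $\Psi \colon \bS(G) \to \cD$ by $\Psi(X) = k[X]$ on objects, with $\alpha_f = f_* \colon k[X] \to k[Y]$ and $\beta_f = f^* \colon k[Y] \to k[X]$ for a morphism $f \colon X \to Y$. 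Functoriality of $f \mapsto f^*$ is clear, functoriality and symmetric monoidality of $f \mapsto f_*$ follow by dualizing, using $(f \times g)^* = f^* \otimes g^*$ together with Proposition~\ref{prop:trace-sum}(d) (compatibility of the trace self-dualities with $\otimes$); and $\Psi$ and $\Psi'$ agree on isomorphisms because an algebra isomorphism of \'etale algebras preserves trace pairings, so its dual is its inverse.

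Next I would check that $\Psi$ is a linearization. Essential surjectivity onto $\cD$ holds by construction. For additivity, given the inclusion $i \colon X \to X \amalg Y$, the map $i^*$ is the projection $k[X] \oplus k[Y] \to k[X]$ and $i_* = (i^*)^{\vee}$ the inclusion, from which the five defining identities are immediate. The substantive point is plenariness. Let $X$ be an atom of $\bS(G)$, i.e.\ a transitive $G$-set, so that $k[X]$ is an atomic \'etale algebra; via the self-duality of $k[X]$ we get $\Hom_{\cC}(k[X], \bbone) \cong \Gamma(k[X])$, and by Proposition~\ref{prop:pretan-mod} (using Corollary~\ref{cor:rigid-mod-2}) this ring is a finite separable field extension of $k$, hence equals $k$ since $k$ is algebraically closed; so $\Hom_{\cC}(k[X],\bbone)$ is one-dimensional. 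Moreover $\alpha_X$, the image under $\Psi$ of the canonical map $X \to \bone$, equals $(X \to \bone)_* = \eta_{k[X]}^{\vee} = \epsilon_{k[X]}$ by Proposition~\ref{prop:eta-dual}, and this is nonzero because $\eta_{k[X]}$ is nonzero (as $k[X] \ne 0$) and dualization is an equivalence; hence $\alpha_X$ spans $\Hom_{\cC}(k[X],\bbone)$.

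Now apply Theorem~\ref{thm:linear} to $\Psi \colon \bS(G) \to \cD$: there is a unique $k$-valued measure $\mu$ for $G$ and a unique equivalence of tensor categories $\Phi \colon \uPerm(G; \mu) \to \cD$ with $\Psi = \Phi \circ \Psi_{\mu}$. Unwinding the equality gives $\Phi(\Vec_X) = k[X]$, $\Phi(A_f) = f_*$, and $\Phi(B_f) = f^*$. Composing with the inclusion $\cD \hookrightarrow \cC$, which is fully faithful as $\cD$ is a full subcategory, yields the asserted fully faithful tensor functor to $\cC$. Finally, I would identify this $\mu$ with the rule defined before the statement: by the explicit formula for the measure given after Theorem~\ref{thm:linear}, for $f \colon X \to Y$ with $Y$ transitive, $\mu(f)$ is the unique scalar with $\alpha_f \beta_X = \mu(f)\, \beta_Y$; here $\beta_X = (X \to \bone)^* = \eta_{k[X]}$ and $\beta_Y = \eta_{k[Y]}$, so this reads $f_* \circ \eta_{k[X]} = \mu(f) \cdot \eta_{k[Y]}$, i.e.\ $\mu(f) = f_*(1)$ once we identify $\Gamma(k[Y]) = k$. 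This is exactly the rule in the theorem statement.

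The conceptual heart of the argument is the reduction to Theorem~\ref{thm:linear}; once that is set up, the only step with genuine content is plenariness, which is precisely where the pre-Tannakian hypothesis (through $\Gamma$ of an atomic \'etale algebra being a field) and the algebraic closure of $k$ are used. The remaining work — verifying that $\Psi$ is a well-defined symmetric monoidal balanced functor and is additive — is routine but is the most laborious part to write out in full, so I expect the bookkeeping in the construction of $\Psi$, rather than any single conceptual difficulty, to be the main obstacle.
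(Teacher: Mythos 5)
Your proposal is correct and follows essentially the same route as the paper: both define the symmetric monoidal balanced functor $\Psi$ with $\Psi(X)=k[X]$, $\alpha_f=f_*$, $\beta_f=f^*$, verify additivity and plenariness (the latter via $\Hom_{\cC}(k[X],\bbone)$ being dual to $\Gamma(k[X])=k$ for $X$ an atom, using algebraic closure of $k$), and then invoke Theorem~\ref{thm:linear} together with the explicit formula $\alpha_f\beta_X=\mu(f)\beta_Y$ to identify the measure with $f\mapsto f_*(1)$. Your write-up simply fills in more of the routine verifications that the paper leaves to the reader.
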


\begin{proof}
We define a symmetric monoidal balanced functor $\Psi \colon \bS(G) \to \cC$. For a finitary $G$-set $X$, we put $\Psi(X)=k[X]$. For a morphism $f \colon X \to Y$ of finitary $G$-sets, we define $\alpha_f=f_*$ and $\beta_f=f^*$. One readily verifies that $\Phi$ is naturally a symmetric monoidal balanced functor, and that $\Psi$ is additive. If $X$ is an atom then $\Hom_{\cC}(\Psi(X), \bbone)$ is dual to $\Gamma(k[X])$, and thus one-dimensional; since $\alpha_X$ is non-zero, it is a basis. We thus see that $\Psi$, regarded as a functor to its essential image, is a linearization of $\bS(G)$. The theorem now follows from Theorem~\ref{thm:linear}, and the remarks following it.
\end{proof}

\section{The main theorem} \label{s:main}

\subsection{Abelian envelopes} \label{ss:abenv}

In \S \ref{s:main}, we prove our main theorem. We first introduce some terminology surrounding abelian envelopes.

\begin{definition}
Let $\cP$ be a tensor category. A \defn{weak abelian envelope} of $\cP$ is a pair $(\cR, \Phi)$, where $\cR$ is a pre-Tannakian category and $\Phi \colon \cP \to \cR$ is a fully faithful tensor functor such that every object of $\cR$ is a subquotient of an object in the image of $\Phi$.
\end{definition}

\begin{definition} \label{defn:abenv}
Let $\cP$ be a tensor category. An \defn{abelian envelope} of $\cP$ is a pair $(\cR, \Phi)$, where $\cR$ is a pre-Tannakian category and $\Phi$ is a tensor functor, such that for any pre-Tannakian category $\cT$ the functor
\begin{displaymath}
\Fun^{\rm ex}_{\otimes}(\cR, \cT) \to \Fun^{\rm faith}_{\otimes}(\cP, \cT), \qquad \Psi \mapsto \Psi \circ \Phi
\end{displaymath}
is an equivalence, where $\Fun^{\rm ex}_{\otimes}$ denotes the category of exact tensor functors and $\Fun^{\rm faith}_{\otimes}$ the category of faithful tensor functors.
\end{definition}

Definition~\ref{defn:abenv} is taken from \cite[Definition~3.1.2]{CEAH}. An abelian envelope is unique (up to canonical equivalence) if it exists. An abelian envelope is also a weak abelian envelope. The following example shows that weak envelopes are not unique in general. However, for the categories $\uPerm(G; \mu)$, we know of no example where there are multiple weak envelopes.

\begin{example} \label{ex:weak}
Suppose $k$ has positive characteristic $p$, and let $\fS$ be the infinite symmetric group acting on the set $\Omega=\{1,2,\ldots\}$. The $k$-valued measures for $\fS$ are parametrized by elements of $\bZ_p$ \cite[\S 15.7]{repst}. For $t \in \bZ_p$, let $\mu_t$ denote the corresponding measure. This measure is not normal, but it is known that $\uPerm(\fS; \mu_t)$ admits a weak abelian envelope $\cC_t$. The category $\cC_t$ can be constructed using ultraproducts; see \cite[\S 3.3]{Harman1}. It is reasonable to expect that $\cC_t$ is the abelian envelope of $\uPerm(\fS; \mu_t)$, but this not known.

Let $\Omega^{[n]}$ denote the subset of $\Omega^n$ consisting of points with distinct co-ordinates. Let $\bS(\fS; \sE)$ denote the full subcategory of $\bS(\fS)$ consisting of $\fS$-sets that decompose into orbits of the form $\Omega^{[n]}$. It turns out that this category is closed under products and fiber products, and is therefore itself a $\rB$-category (though it is not pre-Galois, as equivalence relations are not effective). In the terminology of \cite[\S 2.6]{repst}, $\bS(\fS; \sE)$ is the category of $\sE$-smooth $\fS$-sets, where $\sE$ is the stabilizer class generated by $\Omega$.

There is a notion of measure for the category $\bS(\fS; \sE)$; see \cite[Remark~3.2]{repst}. It turns out that such measures are parametrized by elements of $k$; see \cite[Remark~15.8]{repst}. Let $\nu_a$ be the measure corresponding to $a \in k$. If $a \in \bF_p$ then $\uPerm(\fS, \sE; \nu_a)$ embeds into $\cC_t$ for any $t \in \bZ_p$ lifting $a$, and thus maps to $\cC_t$. It is not difficult to see that all such $\cC_t$ are weak abelian envelopes of $\uPerm(\fS, \sE; \nu_a)$. The existence of these varied weak abelian envelopes in fact implies that $\uPerm(\fS, \sE; \nu_a)$ does not admit an abelian envelope. For $a \not\in \bF_p$, the category $\uPerm(\fS, \sE; \nu_a)$ does not even admit a weak abelian envelope, as it has a nilpotent endomorphism with non-zero trace.
\end{example}

\subsection{The main theorem: the discrete case}

Recall that a pre-Tannakian category is \defn{discrete} if every object is a subquotient of an \'etale algebra. We can now establish our rough classification of such categories, which is the first half of Theorem~\ref{mainthm}.

\begin{theorem}
Let $\cC$ be a discrete pre-Tannakian category, let $G=\pi^{\rm olig}(\cC)$, and let $\mu$ be the induced measure on $G$. Then the natural functor $\Phi \colon \uPerm(G; \mu) \to \cC$ is a weak abelian envelope.
\end{theorem}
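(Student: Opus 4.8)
The plan is to verify directly the three clauses in the definition of weak abelian envelope for the pair $(\uPerm(G;\mu), \Phi)$: that $\cC$ is pre-Tannakian, that $\Phi$ is a fully faithful tensor functor, and that every object of $\cC$ is a subquotient of an object in the image of $\Phi$. The first clause is a hypothesis, and the second is contained in Theorem~\ref{thm:meas}, so all of the content lies in the third clause, which will follow almost immediately from discreteness once the image of $\Phi$ has been identified.

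First I would identify the image of $\Phi$ with the class of \'etale algebras of $\cC$ (up to isomorphism). By Theorem~\ref{thm:meas} we have $\Phi(\Vec_X) = k[X]$ for each finitary $G$-set $X$, where $k[X]$ is the \'etale algebra of $\cC$ corresponding to $X$ under the chosen equivalence $\bS(\cC) \cong \bS(G)$ that defines $G = \pi^{\rm olig}(\cC)$. Since every object of $\bS(G)$ is a finitary $G$-set, every object of $\bS(\cC) = \Et(\cC)^{\op}$ arises in this way; hence every \'etale algebra of $\cC$ is isomorphic, as an object of $\cC$, to $\Phi(\Vec_X)$ for some $X$. No appeal to additivity of $\Phi$ is even needed here, since $\bS(G)$ already contains all finite disjoint unions, so the image of $\Phi$ is automatically closed under finite direct sums.

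Now I would invoke the discreteness hypothesis: by definition every object of $\cC$ is a subquotient of an \'etale algebra, hence of an object in the image of $\Phi$. This is precisely the remaining clause in the definition, so $\Phi$ is a weak abelian envelope of $\uPerm(G;\mu)$.

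I do not expect a genuine obstacle in this step: the heavy lifting has already been done, in Theorem~\ref{thm:frob} (which shows $\bS(\cC)$ is pre-Galois, making $\pi^{\rm olig}(\cC)$ available at all) and in Theorem~\ref{thm:meas} (which produces $\mu$ and the fully faithful functor $\Phi$). The only point worth stating carefully is the dictionary between ``object in the image of $\Phi$'' and ``\'etale algebra of $\cC$'', i.e.\ the unwinding of the definition of the oligomorphic fundamental group; the rest is quoting the definition of weak abelian envelope.
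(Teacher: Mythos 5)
Your proposal is correct and matches the paper's argument: the paper likewise cites Theorem~\ref{thm:meas} for the construction of $\mu$ and the fully faithful $\Phi$, notes that every \'etale algebra lies in the essential image of $\Phi$ (via the identification $\bS(\cC)\cong\bS(G)$), and then invokes discreteness to conclude every object is a subquotient of something in the image. Your extra remark spelling out the dictionary between the image of $\Phi$ and $\Et(\cC)$ is just a more explicit rendering of the same step.
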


\begin{proof}
The measure $\mu$ and the functor $\Phi$ are constructed in Theorem~\ref{thm:meas}, which also shows that $\Phi$ is fully faithful. Since every \'etale algebra is in the essential image of $\Phi$ and every object of $\cC$ is a subquotient of an \'etale algebra, it follows that $\Phi$ is a weak abelian envelope.
\end{proof}

\begin{remark} \label{rmk:olig}
Suppose that in the setting of the theorem $\cC$ is generated by a single \'etale algebra $A$. Let $\cB$ be the sub pre-Galois category of $\bS(\cC)$ generated by $\Sp(A)$; since $\cB$ is finitely generated, it has the form $\bS(H)$ for an oligomorphic group $H$. Mimicking the proof of the theorem, one finds that $\cC$ is the weak abelian envelope of $\uPerm(H; \nu)$ for some measure $\nu$ on $H$. Thus for finitely generated pre-Tannakian categories, one can use oligomorphic groups in place of admissible groups.
\end{remark}

\subsection{Strongly discrete pre-Tannakian  categories}

Recall that a pre-Tannakian category is \defn{strongly discrete} if every object is a quotient of an \'etale algebra. We now establish some properties of such categories.

\begin{proposition} \label{prop:strong-proj}
If $\cC$ is discrete and has enough projectives then it is strongly discrete.
\end{proposition}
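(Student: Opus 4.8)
The plan is to reduce to projective objects and then bootstrap using duality together with the self-duality of \'etale algebras. Since $\cC$ has enough projectives, every object admits a surjection from a projective object, so it suffices to prove that every projective object of $\cC$ is a quotient of an \'etale algebra; composing with a projective presentation then handles arbitrary objects.

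The first step I would establish is: for \emph{any} projective object $P$, the dual $P^{\vee}$ is a quotient of an \'etale algebra. Since $\cC$ is discrete, $P$ is a subquotient of some \'etale algebra $A$, so there is a subobject $A' \subseteq A$ together with a surjection $A' \twoheadrightarrow P$. Because $P$ is projective this surjection splits, so $P$ is a direct summand of $A'$, hence $P^{\vee}$ is a direct summand of $(A')^{\vee}$. Dualizing the inclusion $A' \hookrightarrow A$ yields a surjection $A^{\vee} \twoheadrightarrow (A')^{\vee}$ (duality is exact since $\cC$ is rigid abelian); composing with the projection $(A')^{\vee} \twoheadrightarrow P^{\vee}$ and precomposing with the self-duality isomorphism $A \xrightarrow{\sim} A^{\vee}$ furnished by the trace pairing (\S\ref{ss:etale}) gives a surjection from the \'etale algebra $A$ onto $P^{\vee}$.

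To finish, fix an arbitrary projective object $Q$ and put $Z = Q \otimes Q^{\vee} \otimes Q$. Rigidity gives a natural isomorphism $\Hom_{\cC}(Q \otimes Y, -) \cong \Hom_{\cC}(Q, (-) \otimes Y^{\vee})$, a composite of exact functors, so $Q \otimes Y$ is projective for every object $Y$; in particular $Z$ is projective, and so is $Z^{\vee}$, since in a symmetric rigid category $Z^{\vee} \cong Q \otimes Q^{\vee} \otimes Q^{\vee} = Q \otimes (Q^{\vee} \otimes Q^{\vee})$. Applying the first step to the projective object $Z^{\vee}$ shows that $(Z^{\vee})^{\vee} \cong Z$ is a quotient of an \'etale algebra. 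Finally, the first zig-zag identity for the duality $(Q, Q^{\vee})$ exhibits $Q$ as a retract of $Z$, so precomposing the \'etale surjection onto $Z$ with the splitting $Z \to Q$ realizes $Q$ as a quotient of an \'etale algebra. Running this for a projective cover of a given object $X$ shows that $X$ itself is a quotient of an \'etale algebra.

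The conceptual obstacle, and the reason for the detour through $Z$, is that the naive approach — ``$Q$ is a summand of the subobject $A'$, dualize, done'' — only produces $Q^{\vee}$, not $Q$, as a quotient of an \'etale algebra. Converting this into a statement about $Q$ itself would follow if duals of projectives were projective (equivalently, if projectives and injectives coincided in $\cC$), but I would rather not rely on that input; the object $Z = Q \otimes Q^{\vee} \otimes Q$ is visibly projective because it is $Q$ tensored with something, is the double dual of a projective so the first step applies to it, and contains $Q$ as a direct summand — which are exactly the three properties needed.
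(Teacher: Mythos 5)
Your argument is correct, but it takes a different route from the paper. The paper's proof is a three-line affair: since $\cC$ is pre-Tannakian, projective objects are automatically injective (citing \cite[Proposition~6.1.3]{EGNO}), so a projective $P$ that is a subquotient of an \'etale algebra $A$ is first (by projectivity) a subobject of $A$ and then (by injectivity) a direct summand of $A$, hence a quotient of $A$. You deliberately avoid the projective-implies-injective input and instead reassemble its essential content from scratch: exactness of the duality functor, self-duality of \'etale algebras via the trace pairing (so that a quotient of $A^{\vee}$ is a quotient of $A$), the fact that $Q\otimes Y$ is projective whenever $Q$ is (via the adjunction $\Hom(Q\otimes Y,-)\cong\Hom(Q,-\otimes Y^{\vee})$), and the zig-zag retraction exhibiting $Q$ as a summand of $Q\otimes Q^{\vee}\otimes Q$. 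Each step checks out: your first step correctly shows $P^{\vee}$ is a quotient of an \'etale algebra for $P$ projective, and the detour through $Z=Q\otimes Q^{\vee}\otimes Q$ (with $Z^{\vee}$ projective and $Z\cong(Z^{\vee})^{\vee}$) correctly converts this into a statement about $Q$ itself. What the paper's approach buys is brevity, at the cost of invoking a standard structural fact about pre-Tannakian categories; what yours buys is self-containment, and indeed your ingredients are essentially the ones underlying the standard proof that projectives are injective in rigid tensor categories, so the two arguments are cousins rather than strangers. One cosmetic slip: at the end you should \emph{post}compose the surjection $A\twoheadrightarrow Z$ with the retraction $Z\to Q$ (not ``precompose''), and ``enough projectives'' gives a projective presentation rather than a projective cover, but neither affects the argument.
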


\begin{proof}
Let $X$ be an arbitrary object of $\cC$. Since $\cC$ has enough projectives, there is a surjection $P \to X$ with $P$ projective. Since $\cC$ is pre-Tannakian, $P$ is also injective \cite[Proposition~6.1.3]{EGNO}. Since $\cC$ is discrete, $P$ is a subquotient of some \'etale algebra $A$. Since $P$ is injective and projective, it is a summand of $A$, and in particular a quotient of $A$. Thus $X$ is a quotient of $A$ and so $\cC$ is strongly discrete.
\end{proof}

\begin{proposition} \label{prop:strong}
Suppose that $\cC$ is discrete. Then $\cC$ is strongly discrete if and only if every exact sequence in $\cC$ splits after an \'etale base change.
\end{proposition}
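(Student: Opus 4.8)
The plan is to prove the two implications separately. The reverse implication — that étale-local splitting of exact sequences forces strong discreteness — is short, so I would dispatch it first. Let $X$ be an arbitrary object of $\cC$. Since $\cC$ is discrete, $X$ is a quotient of a subobject $B'$ of some \'etale algebra $B$. Apply the hypothesis to the exact sequence $0 \to B' \to B \to B/B' \to 0$: there is a nonzero \'etale algebra $A$ such that $0 \to A \otimes B' \to A \otimes B \to A \otimes (B/B') \to 0$ splits in $\Mod_A$. In particular $A \otimes B'$ is an $A$-module direct summand of $A \otimes B$, hence a quotient object of it in $\cC$. Now $A \otimes B$ is an \'etale algebra in $\cC$ (Proposition~\ref{prop:trace-sum}), it surjects onto $A \otimes B'$, which surjects onto $A \otimes X$ (tensor the surjection $B' \to X$ with $A$, using exactness of $\otimes$), which surjects onto $X$ via $\epsilon_A \otimes \id_X$; and $\epsilon_A$ is surjective because $\eta_A \colon \bbone \to A$ is a monomorphism (it is nonzero since $A \ne 0$, and $\bbone$ is simple by Corollary~\ref{cor:pretan-unit}), $\epsilon_A = \eta_A^\vee$ (Proposition~\ref{prop:eta-dual}), and dualizing is exact. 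Composing, $X$ is a quotient of the \'etale algebra $A \otimes B$, so $\cC$ is strongly discrete.

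For the forward implication, let $0 \to X \to Y \xrightarrow{p} Z \to 0$ be exact with class $\xi \in \operatorname{Ext}^1_\cC(Z,X)$; we may assume $Z \ne 0$. Using strong discreteness, choose a surjection $A \to Z$ with $A$ a nonzero \'etale algebra, and base change along $A$. Since $A$ is \'etale, $A \cong A^\vee$, so the forgetful functor $\Mod_A \to \cC$ has $A \otimes (-)$ as both its left adjoint (the free module) and its right adjoint (the cofree module $A^\vee \otimes (-)$), and both functors are exact. Running injective resolutions through the cofree adjunction yields a natural isomorphism $\operatorname{Ext}^1_{\Mod_A}(A \otimes Z, A \otimes X) \cong \operatorname{Ext}^1_\cC(A \otimes Z, X)$, under which the class of the base-changed sequence is carried to the pullback of $\xi$ along $\epsilon_A \otimes \id_Z \colon A \otimes Z \to Z$ (the counit of the cofree adjunction is evaluation at $1_A$, which under $A \cong A^\vee$ is $\epsilon_A \otimes \id$, and one then compares short exact sequences). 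Hence it suffices to produce a morphism $\phi \colon A \otimes Z \to Y$ in $\cC$ with $p \circ \phi = \epsilon_A \otimes \id_Z$: then $(\epsilon_A \otimes \id_Z)^* \xi = \phi^*(p^*\xi) = 0$, because $p^*\xi = 0$ (the diagonal splits $Y \times_Z Y \to Y$).

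Producing $\phi$ — equivalently, choosing the \'etale base change so that the pulled-back class vanishes — is the main obstacle, and it is exactly here that strong discreteness (not merely discreteness) is needed, since it is what makes $Z$ itself, and not just a subobject of $Z$, a quotient of an \'etale algebra. The cleanest sufficient condition would be the existence of a nonzero \emph{projective} (equivalently, by \cite[Proposition~6.1.3]{EGNO}, injective) \'etale algebra $A$: then $A \otimes Z$ is projective, $\operatorname{Ext}^1_\cC(A \otimes Z, X) = 0$, and the base-changed sequence splits outright. Absent such a global object, I would instead start from a surjection $A \to Y$ from an \'etale algebra and exploit the Frobenius comultiplication $\delta_A$: the pullback of the sequence along the induced surjection $A \to Z$ splits in $\cC$ via the diagonal $A \to A \times_Z Y$, and from this splitting one extracts an $A$-linear lift of $\id_A \otimes (A \to Z) \colon A \otimes A \to A \otimes Z$ through $A \otimes p$; composing such a lift with an $A$-linear section of $\id_A \otimes (A \to Z)$ gives the desired splitting, so the whole argument comes down to showing that this last map admits a section after a further \'etale base change. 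Identifying that base change, again via strong discreteness, is the delicate point I expect to require the most work.
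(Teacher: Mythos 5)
Your reverse implication is correct and is essentially the paper's own argument (you even spell out the surjectivity of $\epsilon_A\otimes\id_X$, which the paper leaves implicit). The forward implication, however, has a genuine gap, and you flag it yourself: after the Ext-theoretic setup, everything hinges on producing, possibly after a further \'etale base change, an $A$-linear section of a surjection from an \'etale algebra onto an arbitrary object (your $\phi$, or a section of $\id_A\otimes q\colon A\otimes A\to A\otimes Z$), and that step is left open. As stated this is not a reduction: splitting an $A$-linear surjection from the \'etale $A$-algebra $A\otimes A$ onto the $A$-module $A\otimes Z$ is exactly the same type of problem you started with, now relativized over $A$, so the recursion does not terminate. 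Your ``clean sufficient condition'' is also not available in general: a nonzero projective \'etale algebra need not exist in a strongly discrete category (it essentially forces enough projectives), so it cannot carry the proof.

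The two ideas the paper uses, and which your sketch is missing, are these. First, reduce an arbitrary surjection $f\colon X\to Y$ to a surjection onto $\bbone$ by Deligne's internal-Hom trick: form $X'=\uHom(Y,X)\times_{\uHom(Y,Y)}\bbone$, the fiber product over the coevaluation $\bbone\to\uHom(Y,Y)$; an \'etale-local splitting of $X'\to\bbone$ is precisely an \'etale-local splitting of $f$. Second, for a surjection $A\to\bbone$ with $A$ an \emph{atomic} \'etale algebra, $\Hom_{\cC}(A,\bbone)$ is one-dimensional spanned by the trace $\epsilon$, so the surjection is $a\epsilon$ with $a\in k^\times$, and $\id_A\otimes\epsilon\colon A\otimes A\to A$ is split $A$-linearly by $(1\otimes a^{-1})\delta$, using the counit axiom of the Frobenius structure; thus such a surjection splits after base change to $A$ itself. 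Strong discreteness enters only to cover an arbitrary surjection onto $\bbone$ (such as $X'\to\bbone$) by an \'etale algebra, reducing to the atomic computation. Your final paragraph gestures at $\delta_A$, but without the reduction to target $\bbone$ the one-dimensionality of $\Hom(A,\bbone)$ is unavailable, and the step you call ``the delicate point'' is exactly where the proof lives.
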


\begin{proof}
Suppose $\cC$ is discrete. Consider a surjection $f \colon A \to \bbone$ in $\cC$, where $A$ is an \'etale algebra. We claim that $f$ is \'etale locally split. We have a decomposition $A=\bigoplus_{i=1}^n A_i$ where each $A_i$ is atomic. Let $f_i=f \vert_A$. Since $\bbone$ is simple, some $f_i$ is surjective, and it suffices to show that $f_i$ is \'etale locally split. Thus, relabeling, it suffices to treat the case where $A$ is atomic. The $k$-vector space $\Hom_{\cC}(A, \bbone)$ is one-dimensional and spanned by $\epsilon$, and so $f=a \cdot \epsilon$ for some non-zero $a \in k$. Consider the map $\id \otimes f \colon A \otimes A \to A$. This is split by $(1 \otimes a^{-1}) \cdot \delta$, and so the claim is established. 

Next, consider a surjection $f \colon X \to \bbone$, where $X$ is any object of $\cC$. Since $\cC$ is strongly discrete, there is a surjection $g \colon A \to X$ for some \'etale algebra $A$. By the previous paragraph, there is some \'etale algebra $B$ such that $\id \otimes fg \colon B \otimes A \to B$ is split. Letting $s \colon B \to B \otimes A$ be a splitting, we have $(\id \otimes f) s'=\id_B$, where $s'=(\id \otimes g) \circ s$. Thus $f$ is \'etale locally split.

The general case now follows by the same method in \cite[Lemma~7.14]{Deligne1}. Precisely, let $f \colon X \to Y$ be a surjection in $\cC$. There is induced surjection $\uHom(Y, X) \to \uHom(Y, Y)$ and a natural map $\alpha \colon \bbone \to \uHom(Y, Y)$, namely, co-evaluation. Let $X'$ be the fiber product of $\uHom(Y,X)$ and $\bbone$ over $\uHom(Y,Y)$. By the previous paragraph, the surjection $X' \to \bbone$ is \'etale locally split. This yields a local splitting of $f$, as required.

Now suppose that sequences split after an \'etale base change. Let $X$ be a given object in $\cC$. Since $\cC$ is discrete, there is a surjection $Y \to X$ where $Y$ is a subobject of some \'etale algebra $A$. By assumption, the inclusion $Y \to A$ is locally split, and so there is some \'etale algebra $B$ and a splitting $B \otimes A \to B \otimes Y$, which is necessarily surjective. Composing with the surjection $B \otimes Y \to B \otimes X$, we see that $X$ is a quotient of the \'etale algebra $B \otimes A$, as required.
\end{proof}

\begin{remark}
Deligne \cite[Lemma~7.14]{Deligne1} proved that in any pre-Tannakian category in characteristic~0, a short exact sequence splits after an fppf base change.
\end{remark}

To close this discussion, we give an example to show that not all discrete pre-Tannakian categories are strongly discrete.

\begin{example} \label{ex:not-strong}
Suppose $k$ has characteristic $p$, and for $t \in \bZ_p$ let $\cC_t$ be the category defined in Example~\ref{ex:weak}. We claim that $\cC_t$ is discrete but not strongly discrete. Let $\tilde{\cC}_t$ be the ultraproduct of the category $\Rep_k(\fS_n)$, taken with respect to an ultrafilter in which the sequence $1,2,\ldots$ converges to $t$ in $\bZ_p$. Let $X_n=k^n$ be the permutation representation of $\fS_n$, and let $X$ be the ultralimit of the $X_n$'s in $\tilde{\cC}_t$. Then $\cC_t$ is (by definition) the tensor subcategory of $\tilde{\cC}_t$ generated by $X$. Since $X$ is an \'etale algebra, this shows that $\cC_t$ is discrete.

Now, let $X_n^{[p]}$ be the permutation representation of $\fS_n$ with basis indexed by $p$-tuples in $\{1,\ldots,n\}$ with distinct co-ordinates, and let $X^{[p]}$ be the ultralimit. Similarly define $X^{(p)}_n$ and $X^{(p)}$, but using $p$-element subsets. There is a natural surjection $\pi \colon X^{[p]} \to X^{(p)}$, and a unique copy of the trivial representation $\bbone$ in $X^{(p)}$. Let $Y=\pi^{-1}(\bbone)$. One can show that $Y$ is not a quotient of an \'etale algebra in $\cC_t$, which shows that $\cC_t$ is not strongly discrete.

Let us explain the basic idea when $p=2$. The invariant element of $X^{(2)}_n$ is the sum of all 2-element subsets of $\{1,\ldots,n\}$. An inverse image of this element under $\pi$ amounts to choosing an ordering on each such 2-element set. There is no ``reasonable'' way to do this; the stabilizer of such a point is contained in a Young subgroup of the form $\fS_{n/2} \times \fS_{n/2}$, and these groups do not converge to an open subgroup of $\fS$. This prevents one from realizing $Y$ as the image of an \'etale algebra, as such an algebra is a quotient of sums of $X^{[r]}$'s.
\end{example}

\subsection{The main theorem: the strongly discrete case}

We now prove the second half of Theorem~\ref{mainthm}, which gives a very precise classification of strongly discrete pre-Tannakian categories.

\begin{theorem} \label{thm:strong}
Let $\cC$ be a strongly discrete pre-Tannakian category, let $G=\pi^{\rm olig}(\cC)$, and let $\mu$ be the induced measure on $G$.
\begin{enumerate}
\item The measure $\mu$ is normal; if $\cC$ is semi-simple then it is regular.
\item The category $\cC$ is the abelian envelope of $\uPerm(G; \mu)$.
\item $\cC$ is equivalent to $\uRep^{\rf}(G; \mu)$, and any object of $\uRep(G; \mu)$ is the union of its finite length subobjects.
\end{enumerate}
\end{theorem}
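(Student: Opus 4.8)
The starting observation is that a surjection $f\colon X\to Y$ of finitary $G$-sets induces a \emph{surjection} $f_*\colon k[X]\to k[Y]$ in $\cC$. Since $f_*$ is the $\cC$-dual of the algebra map $f^*\colon k[Y]\to k[X]$ (Theorem~\ref{thm:meas} and the discussion preceding it), rigidity reduces this to the injectivity of $f^*$; and $\ker(f^*)$ is an ideal of the \'etale algebra $k[Y]$, hence by Corollary~\ref{cor:rigid-mod-1} a sum of orbit-components, while on each orbit-component $f^*$ carries the unit to the unit of $k[f^{-1}(Y_i)]$, which is nonzero because $f$ hits $Y_i$; so $\ker(f^*)=0$. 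Now $\cC$ is strongly discrete, so by Proposition~\ref{prop:strong} the surjection $f_*$ splits after tensoring with some nonzero \'etale algebra; replacing that algebra by an atomic summand, and using that base change of a split surjection is split, $f_*$ splits after $\otimes\,k[G/W]$ for every sufficiently small open subgroup $W$. The module category $\Mod_{k[G/W]}$ is again a strongly discrete pre-Tannakian category (any object is a quotient of $k[G/W]\otimes A$ with $A$ \'etale in $\cC$, which is \'etale over $k[G/W]$) with oligomorphic fundamental group $W$ and the measure restricted from $\mu$. Applying $\Hom_\cC(\bbone,k[G/W]\otimes-)=\Gamma_{\Mod_{k[G/W]}}(k[G/W]\otimes-)$ to the split surjection $f_*$ and identifying the result (Frobenius reciprocity) with $W$-invariant Schwartz functions, we learn that $f_*$ is onto on $W$-invariant Schwartz functions; as every Schwartz function on $Y$ is $W$-invariant for $W$ small, this is exactly normality. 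For the semisimple clause we must see that $\mu(X)=\epsilon\circ\eta\in\End(\bbone)=k$ is a unit when $X=G/U$ is transitive ($\eta,\epsilon$ the unit and trace of $k[X]$): in the semisimple case $\eta$ is a split mono ($\bbone$ is simple and $\eta\ne0$), so $k[X]=\bbone\oplus M$, and $\Hom_\cC(k[X],\bbone)\cong\Gamma(k[X])$ is one-dimensional over the algebraically closed $k$, hence spanned by the nonzero map $\epsilon$; therefore $\epsilon$ restricts to a nonzero scalar on the summand $\im(\eta)$, and that scalar is $\mu(X)$.

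\textbf{Part (b): the abelian envelope.}
We know from the discrete case already treated that $\Phi\colon\uPerm(G;\mu)\to\cC$ realizes $\cC$ as a weak abelian envelope; strong discreteness upgrades this to an abelian envelope via the fppf-descent technique of Deligne \cite[Lemma~7.14]{Deligne1}. Given a faithful tensor functor $H\colon\uPerm(G;\mu)\to\cT$ with $\cT$ pre-Tannakian, every object $X$ of $\cC$ has a presentation $\Phi(P_1)\xrightarrow{\Phi(d)}\Phi(P_0)\to X\to0$ with $P_i\in\uPerm(G;\mu)$ (strong discreteness writes $X$ as a quotient of an \'etale algebra, and the kernel of that surjection is again a quotient of one; and $d$ is a genuine morphism of $\uPerm(G;\mu)$ since $\Phi$ is fully faithful). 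One sets $\widetilde H(X)=\operatorname{coker}\big(H(d)\colon H(P_1)\to H(P_0)\big)$. Independence of the presentation, functoriality, exactness, and monoidality follow by descent: a morphism between two presentations exists after an \'etale base change $\otimes B$ (the relevant surjection splits after such, by Proposition~\ref{prop:strong}), and the ensuing comparison descends because $H(B)$ is a nonzero \'etale algebra in $\cT$ and hence faithfully flat, its unit being a nonzero---so injective---map out of the simple object $\bbone$ (Corollaries~\ref{cor:pretan-unit} and~\ref{cor:ff-1}). This yields the unique exact tensor functor with $\widetilde H\circ\Phi\cong H$, so $\cC$ is the abelian envelope.

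\textbf{Part (c): identification with $\uRep^{\rf}(G;\mu)$.}
By (a) the measure is normal, so \cite{repst} supplies a fully faithful tensor functor $\iota\colon\uPerm(G;\mu)\to\uRep(G;\mu)$ into a Grothendieck abelian tensor category, and the plan is to produce a tensor equivalence $\uRep(G;\mu)\simeq\operatorname{Ind}(\cC)$ compatible with $\iota$ and with $\Phi\colon\uPerm(G;\mu)\to\cC\hookrightarrow\operatorname{Ind}(\cC)$; restricting to finite-length objects then gives $\cC\simeq\uRep^{\rf}(G;\mu)$, and since every object of $\operatorname{Ind}(\cC)$ is the filtered union of its subobjects lying in $\cC$ (which have finite length), also the last clause. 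The comparison functor sends $X\in\cC$ to the smooth $A(G)$-module $M_X=\operatorname{colim}_U\Hom_\cC(k[G/U],X)$, the colimit over open $U$ along the pushforward maps $k[G/V]\to k[G/U]$; since $\Phi$ is fully faithful and $k[G/U]$ lies in its image, $M_{k[Z]}\cong\iota(\Vec_Z)$, so the functor extends $\iota$, is monoidal, and is left exact. Right exactness is the crucial point: given a surjection $Y\twoheadrightarrow X$ and $\phi\colon k[G/U]\to X$, pick $W$ so that $Y\to X$ splits after $\otimes\,k[G/W]$ (Proposition~\ref{prop:strong}) and a point of $G/U\times G/W$ lying over $eU$ with open stabilizer $V\subseteq U$; pushing $\phi$ forward along the orbit inclusion $k[G/V]\to k[G/U]\otimes k[G/W]$, composing with the splitting $X\otimes k[G/W]\to Y\otimes k[G/W]$, and then contracting the $k[G/W]$-factor via its trace map produces a morphism $k[G/V]\to Y$ which, by functoriality of pushforward, lifts $\phi$ precomposed with $(G/V\to G/U)_*$, i.e.\ lifts the colimit class of $\phi$. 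Exactness together with monoidality makes the functor faithful; fullness and essential surjectivity hold because both $\operatorname{Ind}(\cC)$ and $\uRep(G;\mu)$ are generated, under filtered colimits and subquotients, by the fully faithfully embedded image of $\uPerm(G;\mu)$. Passing to finite-length objects, and combining with the regular case of (a), also yields the Corollary in the introduction.

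\textbf{The main obstacle.}
The technical heart is making the descent in (b)---equivalently, the right exactness of the comparison functor in (c)---fully rigorous: one repeatedly lifts a map out of an \'etale algebra along a surjection, which succeeds only after an \'etale base change, and must then descend the construction while tracking cocycle conditions. That this goes through in the present generality, where the auxiliary \'etale algebras need not have invertible integral (the measure is merely normal, not regular), rests precisely on the normality proved in (a), which makes $\uRep(G;\mu)$ a tensor category into which $\uPerm(G;\mu)$ embeds fully faithfully; verifying that the two comparison functors are mutually inverse, compatibly with all the structure, is the bulk of the argument.
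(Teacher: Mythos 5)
Your part (a) is essentially the paper's argument: surjectivity of $f_*$ via injectivity of $f^*$ on ideals (Corollary~\ref{cor:rigid-mod-1}), then strong discreteness and Proposition~\ref{prop:strong} to split $f_*$ after an \'etale base change, transported through the fully faithful $\Phi$ of Theorem~\ref{thm:meas}. Where the paper then cites \cite[Proposition~11.7]{repst} to convert a section of $A_f$ over an open subgroup into normality, you unpack the same content by hand as surjectivity of $f_*$ on $W$-invariant Schwartz functions for all small $W$; that is fine (your parenthetical claim that $\Mod_{k[G/W]}$ is again strongly discrete with fundamental group $W$ is unproven, but also unused). Your regularity argument, splitting $\eta$ in the semi-simple case and using that $\Hom(k[X],\bbone)$ is spanned by $\epsilon$, is a valid alternative to the paper's shorter route via exactness of $\Gamma$.

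The genuine gaps are in (b) and in the end of (c). For (b) the paper does not construct the extension by hand: it invokes the recognition criterion \cite[Theorem~3.1.4]{CEAH}, whose three hypotheses are exactly the ingredients you already possess --- $\Phi$ fully faithful (Theorem~\ref{thm:meas}); every object of $\cC$ the image of a map $\Phi(\Vec_X)\to\Phi(\Vec_Y)$ (strong discreteness applied to $M$ and to $M^{\vee}$); and splitting of any epimorphism after tensoring with some $\Phi(\Vec_X)$ (Proposition~\ref{prop:strong}). Instead you sketch a direct construction $\widetilde H(X)=\operatorname{coker}(H(d))$ via presentations and faithfully flat descent. As written this is not a proof: independence of the presentation, functoriality, exactness and monoidality of $\widetilde H$ all rest on the descent-with-cocycle-conditions step that you explicitly defer to ``the main obstacle,'' and Definition~\ref{defn:abenv} demands more than the existence of an extension --- the restriction functor must be an equivalence of functor categories, so full faithfulness on natural transformations must also be verified. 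In effect you are re-proving the theorem of \cite{CEAH} and omitting its hard part. Similarly, in (c) your justification that fullness and essential surjectivity ``hold because both categories are generated \dots by the fully faithfully embedded image of $\uPerm(G;\mu)$'' is not an argument: two cocomplete categories generated by a common full subcategory need not be equivalent, and a functor restricting to a fully faithful one on generators need not be full. The paper's (admittedly outlined) route is concrete here: prove full faithfulness by reducing along presentations by the $k[G/U]$, show simples map to simples, deduce that $\cC(G/U)$ has finite length, and identify the essential image with the finite-length objects, which also gives the ``union of finite length subobjects'' clause. Your exactness/lifting construction for the comparison functor (choosing an orbit with stabilizer $V\subseteq U$ and contracting with the trace of $k[G/W]$) is in the spirit of the paper's step and is fine as a sketch, but the full-faithfulness and essential-surjectivity steps need the paper's (or an equivalent) argument rather than the generation claim.
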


An analog to Remark~\ref{rmk:olig} applies to this theorem. The theorem implies that if $\cC$ is semi-simple discrete pre-Tannakian category then $\pi^{\rm olig}(G)$ must carry a regular measure. We can use this to show that certain kinds of categories do not exist. For instance:

\begin{corollary}
Let $G$ be the oligomorphic group of orientation preserving self-homeo\-morphisms of the unit circle. There is no semi-simple discrete pre-Tannakian category $\cC$ such that $\pi^{\rm olig}(\cC)=G$.
\end{corollary}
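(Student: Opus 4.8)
The plan is to combine Theorem~\ref{thm:strong}(a) with an analysis of the measures on $G$. First, a semi-simple pre-Tannakian category has enough projectives, so by Proposition~\ref{prop:strong-proj} a semi-simple discrete $\cC$ is automatically strongly discrete. Thus if $\cC$ were as in the statement, Theorem~\ref{thm:strong}(a) would force the induced measure $\mu$ on $G=\pi^{\rm olig}(\cC)$ to be regular, i.e.\ $\mu(X)\in k^{\times}$ for every transitive $G$-set $X$. So it suffices to show that $G=\mathrm{Homeo}^{+}(S^{1})$ admits no regular $k$-valued measure; I will do this by exhibiting a transitive $G$-set on which every $k$-valued measure vanishes.

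Write $\Omega=S^{1}$, a transitive $G$-set (the ``point orbit''), and put $s=\mu(\Omega)$. Decomposing $\Omega\times\Omega$ into the diagonal (isomorphic to $\Omega$) and the orbit $\Omega^{[2]}$ of ordered pairs of distinct points, and applying the multiplicativity axiom to the first projection $\Omega^{[2]}\to\Omega$ — whose fiber over $a$ is $I:=\Omega\setminus\{a\}$, a transitive $G(a)$-set — gives $s^{2}=s+\mu(I)\,s$. Hence either $s=0$, in which case $\Omega$ already witnesses non-regularity, or $\mu(I)=s-1$. Next, decompose $\Omega^{3}$ into: the locus where all coordinates coincide ($\cong\Omega$); the locus where exactly two coincide (three disjoint copies of $\Omega^{[2]}$); and the locus of distinct triples, which on the circle consists of just the two orbits $O^{+}$, $O^{-}$ of positively and negatively cyclically ordered triples, interchanged by the $G$-equivariant involution $(x,y,z)\mapsto(x,z,y)$ and hence of equal measure. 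This yields $s^{3}=s+3s(s-1)+2\mu(O^{+})$, i.e.
\begin{displaymath}
2\mu(O^{+})=s(s-1)(s-2),
\end{displaymath}
and $O^{+}$ is a nonempty transitive $G$-set.

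The remaining point — and the heart of the matter — is to constrain $s$. The key observation is that $I$, with its $G(a)$-action, is precisely the point orbit of the ``line group'' $\mathrm{Homeo}^{+}$ of an open interval (cutting $S^{1}$ at $a$ identifies $G(a)$ with this group), so $\mu(I)$ is a value assumed by some $k$-valued measure on the line group. Invoking the structure of such measures (cf.\ \cite{line}, where the measures underlying the Delannoy category are analyzed, and \cite{circle}; one may also argue directly by self-similarity, cutting the interval at an interior point and chasing multiplicativity through the two projections of $I\times I$), this value lies in $\{0,-1\}$. Hence $s=1+\mu(I)\in\{0,1\}$; since $s\ne0$ has already been dealt with, $s=1$, and then $2\mu(O^{+})=1\cdot0\cdot(-1)=0$, so in characteristic $\ne 2$ we get $\mu(O^{+})=0$. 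In characteristic $2$ one instead observes that the two-element fiber of $\Omega^{[2]}\to\Omega^{(2)}$ (the set of two-element subsets) has measure $m$ with $m^{2}=2m=0$, hence $m=0$, forcing $\mu(\Omega^{[2]})=0$ and $s\in\{0,1\}$, and argues in the same way. In every case a transitive $G$-set has measure $0$, so $\mu$ is not regular — the desired contradiction.

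I expect the third step — pinning down $\mu(I)$, equivalently controlling the measures on the line (and circle) group — to be the main obstacle; everything else is routine bookkeeping with the measure axioms of Definition~\ref{defn:meas}. If the classification of measures on $G$ is available from the companion work, the corollary is immediate from Theorem~\ref{thm:strong}(a), and the orbit computation above serves mainly to make transparent why $G$ carries no regular measure.
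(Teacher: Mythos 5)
Your reduction is exactly the paper's: semi-simplicity gives enough projectives, so a semi-simple discrete category is strongly discrete (Proposition~\ref{prop:strong-proj}), and Theorem~\ref{thm:strong}(a) then forces the induced measure on $G=\pi^{\rm olig}(\cC)$ to be regular; the paper finishes in one line by citing \cite[\S 17.9]{repst} for the fact that this $G$ admits no regular measure. That citation is precisely the ``fallback'' you mention in your last paragraph, so that version of your argument is fine. The issue is with your attempted self-contained proof that no regular measure exists, and the gap sits exactly where you flag it: the claim that $\mu(I)\in\{0,-1\}$. Viewing $I=\Omega\setminus\{a\}$ merely as the point orbit of the line group does not give this: writing $r_{\pm}$ for the measures of the two ``rays'' (the components of $I$ minus a second point), the cut-and-conjugate argument forces every two-endpoint interval to have measure $-1$, and the multiplicativity constraint one actually gets (from the two projections of the orbit $\{a<u<v\}$, not of $I\times I$, whose two projections only yield $\mu(I)r_+=\mu(I)r_-$) is $r_\pm^2=-r_\pm$; hence $\mu(I)=1+r_++r_-\in\{-1,0,1\}$, not $\{0,-1\}$. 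The references \cite{line} and \cite{circle} study the categories attached to particular measures and do not supply the classification you need, so the value $\mu(I)=1$ (i.e.\ $s=2$) is not excluded by anything you invoke. (As it happens $s=2$ would still give $s(s-1)(s-2)=0$, but you cannot assert the finite list of possible $s$ without proof, so as written the central step is unproved.)

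There is also a much shorter way to close the gap, which makes the product, triple, and characteristic-$2$ bookkeeping unnecessary: use conjugation invariance under the full circle group. Since $G$ acts transitively on ordered pairs of distinct points, all open arcs with two marked endpoints are conjugate $\hat{G}$-sets, say of common measure $y$; cutting such an arc at an interior point and using additivity (the measure is unchanged under shrinking the open subgroup) gives $y=2y+1$, so $y=-1$. Cutting $I$ at a point then gives $\mu(I)=2y+1=-1$, whence $s=\mu(\Omega)=1+\mu(I)=0$. Thus the point orbit itself is a transitive $G$-set of measure zero, so no measure on $G$ is regular, in any characteristic. With that substitution your argument becomes a correct, self-contained alternative to the paper's citation of \cite[\S 17.9]{repst}; without it, the proposal is incomplete at its main step.
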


\begin{proof}
We showed in \cite[\S 17.9]{repst} that $G$ has no regular measure.
\end{proof}

\begin{remark}
There is a (non-semi-simple) pre-Tannakian category associated to the group $G$ in the corollary; see \cite{circle}.
\end{remark}

We prove Theorem~\ref{thm:strong} in the next several lemmas. We let $G=\pi^{\rm olig}(\cC)$ and identify $\bS(\cC)$ with $\bS(G)$. We use notation similar to \S \ref{s:meas}, e.g., for a finitary $G$-set $X$, we write $k[X]$ for the corresponding \'etale algebra in $\cC$. We let
\begin{displaymath}
\Phi \colon \uPerm(G; \mu) \to \cC
\end{displaymath}
be the functor produced by Theorem~\ref{thm:meas}.

\begin{lemma}
The measure $\mu$ is normal.
\end{lemma}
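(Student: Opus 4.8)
The plan is to unwind the definition of normality and push it, through the functor $\Phi$ of Theorem~\ref{thm:meas}, onto a splitting statement that strong discreteness supplies. Recall that $\mu$ is normal exactly when $f_*\colon\cC(X)\to\cC(Y)$ is surjective for every surjection $f\colon X\to Y$ of finitary $G$-sets. Since every Schwartz function is invariant under some open subgroup, it suffices to fix such an $f$, fix $\phi\in\cC(Y)$, choose an open subgroup $U\le G$ with $\phi\in\cC(Y)^U$, and exhibit some $\psi\in\cC(X)$ with $f_*\psi=\phi$.

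The first ingredient is a dictionary between $\cC$ and Schwartz spaces. Using that $\Phi$ is fully faithful with $\Phi(\Vec_W)=k[W]$ and $\Phi(A_h)=h_*$, one identifies $\Gamma(k[W])\cong\cC(W)^G$ compatibly with push-forwards, and more generally $\Gamma(k[G/U\times W])\cong\cC(W)^U$; under the latter identification, $\Gamma$ applied to $(\id_{G/U}\times f)_*\colon k[G/U\times X]\to k[G/U\times Y]$ becomes the restriction of $f_*$ to $U$-invariants, $f_*\colon\cC(X)^U\to\cC(Y)^U$. So it is enough to make this last map surjective. The second ingredient is that $g:=\id_{G/U}\times f$ is a surjection of finitary $G$-sets, so $g_*\colon k[G/U\times X]\to k[G/U\times Y]$ is an epimorphism in $\cC$: its dual $g^*$ is the pullback algebra map, and $g^*$ is injective because $G/U\times Y$ splits into transitive pieces $Y_i$ with each $k[Y_i]$ an atomic \'etale algebra, whose only nonzero ideal is itself (Corollary~\ref{cor:rigid-mod-1}), so the nonzero map $k[Y_i]\to k[g^{-1}(Y_i)]$ has trivial kernel; dualizing a monomorphism in the rigid abelian category $\cC$ yields an epimorphism.

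Now strong discreteness enters. By Proposition~\ref{prop:strong} the short exact sequence $0\to\ker(g_*)\to k[G/U\times X]\xrightarrow{g_*}k[G/U\times Y]\to 0$ becomes split after applying $B\otimes-$ for some \'etale algebra $B$ of $\cC$; writing $B=k[Z]$ for a finitary $G$-set $Z$ (under $\bS(\cC)\cong\bS(G)$ every \'etale algebra has this form), the map $\id_B\otimes g_*=(\id_Z\times g)_*$ is a split epimorphism $k[Z\times G/U\times X]\to k[Z\times G/U\times Y]$. Applying $\Gamma$ and decomposing $Z\times G/U=\coprod_\alpha G/S_\alpha$ into $G$-orbits turns this into a surjection $\bigoplus_\alpha\cC(X)^{S_\alpha}\to\bigoplus_\alpha\cC(Y)^{S_\alpha}$ that is $f_*$ in each coordinate, so each $f_*\colon\cC(X)^{S_\alpha}\to\cC(Y)^{S_\alpha}$ is surjective. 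The orbit of $Z\times G/U$ through a point $(z_0,eU)$ has stabilizer $\mathrm{Stab}_G(z_0)\cap U\subseteq U$, so for that index $\alpha$ we have $S_\alpha\subseteq U$, whence $\phi\in\cC(Y)^U\subseteq\cC(Y)^{S_\alpha}=f_*(\cC(X)^{S_\alpha})$, which furnishes the required $\psi$.

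I expect the bookkeeping in the second paragraph — matching $h_*$ and $h^*$ with the correct transition and restriction maps, and checking that $\Gamma$ of a push-forward is the push-forward on invariants — to be routine but fiddly. The conceptual point, and the step to be most careful about, is the device of twisting by $G/U$ before invoking Proposition~\ref{prop:strong}: this is precisely what makes the uncontrolled \'etale algebra $B$ usable, since $Z\times G/U$ is guaranteed to contain an orbit whose stabilizer lies inside the prescribed $U$, so the splitting produced for $B$ already pins down $\phi$.
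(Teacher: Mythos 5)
Your proof is correct in outline, and it shares the paper's two main inputs: injectivity of the pullback (hence surjectivity of the pushforward) via the atomic decomposition and Corollary~\ref{cor:rigid-mod-1}, and an \'etale-local splitting supplied by Proposition~\ref{prop:strong}, transported through the fully faithful functor $\Phi$ of Theorem~\ref{thm:meas}. Where you genuinely diverge is the endgame. The paper never touches individual Schwartz functions: from a $k[G/U']$-linear splitting of $\id \otimes f_*$ (over an atomic summand $k[G/U']$ of the splitting algebra) it deduces, by full faithfulness, a splitting of $A_f$ in $\uPerm(U';\mu)$ --- a single $U'$-equivariant right inverse of the matrix $A_f$ --- and then concludes by citing \cite[Proposition~11.7]{repst}. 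You instead fix $\phi \in \cC(Y)^U$, twist by $G/U$ \emph{before} invoking Proposition~\ref{prop:strong} so that the uncontrolled splitting algebra $k[Z]$ is forced to contribute an orbit of $Z \times G/U$ with stabilizer inside $U$ (note you need $Z \neq \emptyset$, i.e.\ the splitting algebra nonzero; this is implicit in Proposition~\ref{prop:strong}, exactly as in the paper's choice of an atomic summand), then apply $\Gamma$ and read off surjectivity of $f_*$ on $S_\alpha$-invariants through the identification $\Gamma(k[G/S \times W]) \cong \cC(W)^S$. Your route buys self-containedness: it avoids the citation of \cite[Proposition~11.7]{repst}, and it only needs a plain additive section (not a module-linear one), since $\Gamma$ preserves split epimorphisms; the dictionary you call ``routine but fiddly'' is indeed routine. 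The paper's route buys a uniform, equivariant splitting of $A_f$ rather than preimages of individual Schwartz functions, and this is not merely cosmetic: if normality is needed in the finer sense of \cite{repst} (surjectivity of push-forwards for $\hat{G}$-sets, i.e.\ over every open subgroup --- which is what the later use of $\uRep(G;\mu)$ quietly relies on), then your argument, which establishes the definition exactly as recalled in this paper (surjections of $G$-sets), would have to be rerun inside $\Mod_{k[G/V]}$ for each open $V$, or upgraded to an honest equivariant right inverse, whereas the paper's appeal to \cite[Proposition~11.7]{repst} covers that automatically.
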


\begin{proof}
Let $f \colon X \to Y$ be a surjection of finitary $G$-sets. Since every atom in $Y$ is hit, it follows from the description of ideals of $k[Y]$ (Corollary~\ref{cor:rigid-mod-1}) that $f^* \colon k[Y] \to k[X]$ is injective; thus $f_* \colon k[X] \to k[Y]$ is surjective. Since $\cC$ is strongly discrete, $f_*$ is \'etale locally split. Thus there is some open subgroup $U$ such that $\id \otimes f_* \colon k[G/U] \otimes k[X] \to k[G/U] \otimes k[Y]$ has a $k[G/U]$-linear splitting. Since $\Phi$ is fully faithful and takes $A_f$ to $f_*$, it follows that $\id \otimes A_f \colon \Vec_{G/U} \uotimes \Vec_X \to \Vec_{G/U} \uotimes \Vec_Y$ has a $\Vec_{G/U}$-linear splitting. One easily sees that such a splitting corresponds to a splitting of $A_f \colon \Vec_X \to \Vec_Y$ in the category $\uPerm(U; \mu)$. Thus $A_f$ has a $U$-equivariant right inverse, and so $\mu$ is normal by \cite[Proposition~11.7]{repst}.
\end{proof}

\begin{lemma}
If $\cC$ is semi-simple then $\mu$ is regular.
\end{lemma}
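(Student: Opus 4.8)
The plan is to unwind the definition of $\mu$ on transitive $G$-sets and then use semi-simplicity to split the unit map of the associated \'etale algebra. Fix a transitive $G$-set $X$ and write $A=k[X]$, an atomic \'etale algebra in $\cC$. By definition $\mu(X)=\mu(f)$ for the unique map $f\colon X\to\bone$, and $\mu(f)=f_*(1)$. Now $k[\bone]=\bbone$, and $f^*\colon k[\bone]\to k[X]$ is the unique algebra map $\bbone\to A$, i.e.\ the unit map $\eta_A$; hence by Proposition~\ref{prop:eta-dual} its dual $f_*=\eta_A^{\vee}$ is the trace map $\epsilon_A\colon A\to\bbone$. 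Therefore $\mu(X)=\epsilon_A\circ\eta_A$, viewed as an element of $\End_{\cC}(\bbone)=k$.

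Next I would note that $\eta_A\colon\bbone\to A$ is injective: it is nonzero since $A\ne 0$, and $\bbone$ is simple (Corollary~\ref{cor:pretan-unit}). Because $\cC$ is semi-simple, this injection splits, so there is a morphism $p\colon A\to\bbone$ with $p\circ\eta_A=\id_{\bbone}$. On the other hand, $\Hom_{\cC}(A,\bbone)$ is one-dimensional over $k$: it is dual to $\Gamma(A)$, which is $k$ because $A$ is atomic and $k$ is algebraically closed (Proposition~\ref{prop:pretan-mod}), and it is spanned by $\epsilon_A$, which is nonzero (the trace pairing $\beta_A=\epsilon_A\circ\mu_A$ is perfect). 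This is exactly the plenarity computation in the proof of Theorem~\ref{thm:meas}, where $\alpha_X$ for $X\to\bone$ is precisely $f_*=\epsilon_A$, so I would simply invoke it. Consequently $p=c\,\epsilon_A$ for some $c\in k$, and
\[
\id_{\bbone}=p\circ\eta_A=c\,(\epsilon_A\circ\eta_A)=c\,\mu(X),
\]
so $\mu(X)$ is a unit of $k$. Since $X$ was an arbitrary transitive $G$-set, $\mu$ is regular.

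There is essentially no hard step: the substance is the identification $\mu(X)=\epsilon_A\circ\eta_A$ together with the fact that semi-simplicity forces the monic $\eta_A$ to split, after which one-dimensionality of $\Hom_{\cC}(A,\bbone)$ does the rest. The only point to handle carefully is this last one-dimensionality, and that has already been established in the course of proving Theorem~\ref{thm:meas}; by contrast, for a general strongly discrete $\cC$ the unit map need not split, which is exactly why one only gets normality and not regularity there.
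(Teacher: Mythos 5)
Your proof is correct and is essentially the paper's argument in dual form: both reduce to showing that $\mu(X)=\epsilon_A\circ\eta_A\in k$ is non-zero, the paper by applying the exact functor $\Gamma$ to the surjection $f_*=\epsilon_A\colon k[X]\to\bbone$, you by splitting the injection $\eta_A$ and using that $\Hom_{\cC}(A,\bbone)$ is one-dimensional and spanned by $\epsilon_A$. Your identification $f_*=\eta_A^{\vee}=\epsilon_A$ via Proposition~\ref{prop:eta-dual} and your appeal to the plenarity computation from the proof of Theorem~\ref{thm:meas} are both valid, so there is no gap.
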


\begin{proof}
Let $X$ be a transitive $G$-set. To show that $\mu$ is regular, we must show $\mu(X)$ is non-zero. Let $f \colon X \to \bone$ be the map to the one point $G$-set. We thus have a surjection $f_* \colon k[X] \to \bbone$ in $\cC$. Since $\cC$ is semi-simple, the functor $\Gamma$ is exact, and so $f_* \colon \Gamma(k[X]) \to \Gamma(\bbone)$ is surjective. It follows that $\mu(f)=f_*(1)$ is non-zero. Since $\mu(X)=\mu(f)$, we thus see that $\mu$ is regular.
\end{proof}

\begin{lemma}
$\cC$ is the abelian envelope of $\uPerm(G; \mu)$ via $\Phi$.
\end{lemma}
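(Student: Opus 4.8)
The plan is to promote the weak abelian envelope already in hand to a genuine abelian envelope, using the extra strength of the strongly discrete hypothesis. By the lemmas above, $\Phi\colon\uPerm(G;\mu)\to\cC$ is a weak abelian envelope and the measure $\mu$ is normal, so what remains is the universal property of Definition~\ref{defn:abenv}. I would establish it by invoking the recognition criterion for abelian envelopes from \cite{CEAH}: a fully faithful tensor functor $\Phi\colon\cP\to\cR$ into a pre-Tannakian category $\cR$ exhibits $\cR$ as the abelian envelope of $\cP$ provided every object of $\cR$ is a \emph{quotient} of an object in the essential image of $\Phi$ (as opposed to merely a subquotient, which is the weak condition). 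This is exactly the strongly discrete condition transported through the defining equivalence $\bS(\cC)\simeq\bS(G)$.

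So the single point to verify is that every object of $\cC$ is a quotient of $\Phi(\Vec_X)$ for some finitary $G$-set $X$. For this I would first note that the objects in the essential image of $\Phi$ are, up to isomorphism, exactly the underlying objects of the \'etale algebras of $\cC$: by Theorem~\ref{thm:meas} we have $\Phi(\Vec_X)=k[X]$, where $k[X]$ is the \'etale algebra of $\cC$ attached to $X$ under the identification $\bS(\cC)\simeq\bS(G)$; and conversely every \'etale algebra of $\cC$ is isomorphic, as an algebra and hence as an object, to some $k[X]$. Since $\cC$ is strongly discrete, every object of $\cC$ is by definition a quotient of an \'etale algebra, hence of an object in the essential image of $\Phi$. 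The recognition criterion then completes the proof.

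Should a self-contained argument be preferred, the key steps would be: (i) every object $c$ of $\cC$ is the cokernel of a morphism $\Phi(g)$ with $g$ in $\uPerm(G;\mu)$ --- write $c$ as a quotient $q\colon k[X]\twoheadrightarrow c$, write $\ker(q)$ (again an object of the strongly discrete category $\cC$) as a quotient $p\colon k[Y]\twoheadrightarrow\ker(q)$, let $h\colon k[Y]\to k[X]$ be the composite with the inclusion $\ker(q)\hookrightarrow k[X]$, so that $c=\operatorname{coker}(h)$, and use full faithfulness of $\Phi$ to write $h=\Phi(g)$; (ii) for a pre-Tannakian $\cT$ and a faithful tensor functor $F\colon\uPerm(G;\mu)\to\cT$, define the extension on objects by $\tilde F(\operatorname{coker}\Phi(g))=\operatorname{coker}F(g)$, then check independence of the chosen presentation (comparing two presentations through a common refinement built from pullbacks in $\cC$, whose terms are again quotients of objects in the image of $\Phi$), extend to morphisms, and verify exactness, symmetric monoidality, and $\tilde F\circ\Phi\cong F$; (iii) deduce uniqueness of $\tilde F$ and full faithfulness of $\Psi\mapsto\Psi\circ\Phi$ on the relevant functor categories, again from the cokernel presentations of (i).

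The main obstacle, on the self-contained route, is step (ii): proving that $\tilde F$ is well defined on objects and compatible with $\otimes$, the rest being bookkeeping. On the route through \cite{CEAH} there is essentially no obstacle beyond matching hypotheses; the only genuine input is the \'etale-algebra/$G$-set dictionary, which is precisely what turns ``strongly discrete'' into ``every object is a quotient of an object in the image of $\Phi$.''
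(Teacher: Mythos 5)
Your overall route is the same as the paper's: invoke the recognition theorem of \cite{CEAH} for the fully faithful functor $\Phi$, using strong discreteness to supply the generation condition. But the criterion you quote is incomplete, and the omission is exactly where the real work lies. The theorem the paper cites (\cite[Theorem~3.1.4]{CEAH}) requires, in addition to full faithfulness and the condition that every object of $\cC$ is an image (or quotient) of objects in the essential image of $\Phi$, a third hypothesis: for every epimorphism $f \colon M \to N$ in $\cC$ there is a finitary $G$-set $X$ such that $\id \otimes f \colon \Phi(\Vec_X) \otimes M \to \Phi(\Vec_X) \otimes N$ splits. ``Fully faithful plus every object is a quotient of an image object'' is not by itself a recognition criterion for the abelian envelope, and it is not what \cite{CEAH} proves; the splitting condition is what makes it possible to extend a faithful tensor functor on $\uPerm(G;\mu)$ to an exact one on $\cC$ (an epimorphism that becomes split after tensoring with an image object is carried to a split, hence exact, sequence). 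In the paper this third condition is supplied by Proposition~\ref{prop:strong}: strong discreteness is equivalent to exact sequences splitting \'etale locally, and the proof of that proposition is a genuine argument (following Deligne's Lemma~7.14, via internal Hom), not something absorbed into the definition of strongly discrete. So the strongly discrete hypothesis is used twice --- once for quotient generation, once for local splitting --- and your write-up only uses it once.

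The same gap resurfaces in your ``self-contained'' sketch: defining $\tilde F$ on cokernel presentations and checking independence of the presentation is precisely the step that fails without the local splitting property, since a faithful tensor functor need not be exact and there is no a priori reason that $\operatorname{coker} F(g)$ is independent of the presentation or that $\tilde F$ is exact. Your step (i) (presenting every object as $\operatorname{coker}\Phi(g)$ using two surjections from \'etale algebras) is fine and is a mild variant of the paper's verification of the generation condition (the paper instead uses a surjection onto $M$ and one onto $M^\vee$, dualized via self-duality of the $\Vec_Y$, to realize $M$ as an image). To repair the argument, add the verification of the splitting hypothesis of \cite[Theorem~3.1.4]{CEAH} by citing Proposition~\ref{prop:strong} together with the fact that $\Phi(\Vec_X)=k[X]$ realizes every \'etale algebra in the image of $\Phi$.
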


\begin{proof}
According to \cite[Theorem~3.1.4]{CEAH}, it suffices to verify the following conditions:
\begin{enumerate}[(i)]
\item $\Phi$ is fully faithful.
\item Any object of $\cC$ is isomorphic to the image of some map $\Phi(\Vec_X) \to \Phi(\Vec_Y)$.
\item For any epimorphism $f \colon M \to N$ in $\cC$, there exists a finitary $G$-set $X$ such that $\id \otimes f \colon \Phi(\Vec_X) \otimes M \to \Phi(\Vec_X) \otimes N$ splits.
\end{enumerate}
We have already shown (i) in Theorem~\ref{thm:meas}. Condition~(ii) follows since $\cC$ is strongly discrete: given $M \in \cC$, we have surjections $\Phi(\Vec_X) \to M$ and $\Phi(\Vec_Y^{\vee}) \to M^{\vee}$, for some finitary $G$-sets $X$ and $Y$, and so $M$ is the image of the map $\Phi(\Vec_X) \to \Phi(\Vec_Y)$. Condition~(iii) follows from Proposition~\ref{prop:strong}.
\end{proof}

\begin{lemma} \label{lem:strong-3}
$\cC$ is equivalent to $\uRep^{\rf}(G; \mu)$, and any object of $\uRep(G; \mu)$ is the union of its finite length subobjects.
\end{lemma}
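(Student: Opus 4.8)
The plan is to identify $\cC$ explicitly inside $\uRep(G;\mu)$ by constructing a left-exact tensor functor
\begin{displaymath}
\Psi \colon \cC \longrightarrow \uRep(G;\mu), \qquad \Psi(M) = \varinjlim_U \Hom_\cC(k[G/U], M),
\end{displaymath}
the colimit being over the open subgroups $U$ of $G$, with transition maps given by precomposition with the pushforwards $k[G/V] \to k[G/U]$, and with its natural smooth module structure over the completed group algebra $A = A(G)$ (the algebras $\End_\cC(k[G/U]) = \End_{\uPerm(G;\mu)}(\Vec_{G/U})$ act by precomposition). Since $\Phi$ is fully faithful with $\Phi(\Vec_{G/U}) = k[G/U]$, one finds $\Psi(k[X]) \cong \cC(X)$ for every finitary $G$-set $X$, so that $\Psi \circ \Phi$ is the standard embedding $\uPerm(G;\mu) \hookrightarrow \uRep(G;\mu)$; here one uses that every object $N$ of $\uRep(G;\mu)$ is recovered as $\varinjlim_U \Hom_{\uRep}(\cC(G/U),N)$, which is exactly the smoothness of $N$.

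The first substantial point is that $\Psi$ is fully faithful, and strong discreteness does the work. Faithfulness is immediate: any $M$ in $\cC$ admits a surjection $k[X] \twoheadrightarrow M$ from an \'etale algebra, and a morphism killed by $\Psi$ kills this surjection. For fullness, given $\psi \colon \Psi(M) \to \Psi(N)$, fix a surjection $\pi \colon k[X] \twoheadrightarrow M$ and use the fact — a consequence of normality — that every morphism $k[G/V] \to k[G/U]$ of permutation algebras factors as the pushforward $k[G/V] \to k[G/U]$ followed by an endomorphism of $k[G/V]$; chasing the $A$-linearity of $\psi$ against such factorizations, and covering a relevant fiber product by a permutation algebra (strong discreteness again), one produces $g \colon M \to N$ with $\Psi(g) = \psi$.

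The second, and main, point is that the essential image of $\Psi$ is exactly $\uRep^{\rf}(G;\mu)$. Note first that every object of $\cC$ also \emph{embeds} into an \'etale algebra — dualize a surjection from one and use that \'etale algebras are self-dual via the trace pairing. Granting that each permutation module $\cC(X)$ has finite length in $\uRep(G;\mu)$, it follows that $\Psi(M)$ embeds into some $\cC(X)$ and hence has finite length, so $\Psi(\cC) \subseteq \uRep^{\rf}(G;\mu)$; conversely a finite-length object of $\uRep(G;\mu)$ is a quotient of a finite direct sum of permutation modules, i.e.\ of $\Psi$ of an \'etale algebra, and one checks (using full faithfulness and the finite-length hypothesis) that $\Psi(\cC)$ is closed under such quotients, giving essential surjectivity. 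Thus everything comes down to the claim that $\cC(X)$ has finite length in $\uRep(G;\mu)$ for every finitary $G$-set $X$. This is the conclusion of \cite[Theorem~13.2]{repst}, but we must prove it for a measure that is merely normal, not quasi-regular; the new input is that the $\cC(X)$ generate, fully faithfully, the pre-Tannakian category $\cC$ — in particular each $\End_\cC(k[X])$ is finite dimensional and $k[X]$ has finite length in $\cC$ — and the argument of \cite[\S 13]{repst} must be rerun from this input. With this in hand, $\Psi$ is an exact (an equivalence onto $\uRep^{\rf}(G;\mu)$ followed by the exact inclusion) fully faithful tensor functor identifying $\cC$ with $\uRep^{\rf}(G;\mu)$, which is the first assertion.

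For the density statement: any object $N$ of $\uRep(G;\mu)$ is the filtered union of its finitely generated submodules, and every finitely generated smooth $A$-module is a quotient of a finite direct sum of permutation modules $\cC(X)$ — this is just smoothness — hence has finite length by the claim above; so $N$ is the union of its finite-length subobjects. The main obstacle is that claim, the finite length of the permutation modules $\cC(X)$ under normality: every other step is formal or a consequence of strong discreteness, while this one requires rerunning the heart of \cite[\S 13]{repst} with normality and the abelian-envelope property replacing quasi-regularity and property~(P).
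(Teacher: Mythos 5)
There is a genuine gap, and you have located it yourself: your entire argument funnels into the claim that each permutation module $\cC(X)$ has finite length in $\uRep(G;\mu)$, and at that point you defer to ``rerunning the heart of \cite[\S 13]{repst}'' with normality in place of quasi-regularity and property~(P). That is exactly the hard content of the lemma, and it is left unproved; as written, the proposal is a reduction to an unestablished external statement, not a proof. Moreover, the deferral is unnecessary: the tools you have already assembled suffice. Once you know that $\Psi$ is exact (which, as in your setup, follows from strong discreteness via \'etale local splittings of short exact sequences) and fully faithful with $\Psi(k[G/U])\cong\cC(G/U)$, you can show directly that $\Psi$ sends simple objects of $\cC$ to simple objects of $\uRep(G;\mu)$: if $S$ is simple and $\Psi(S)$ had a proper nonzero subobject, then since the $\cC(G/U)$ generate $\uRep(G;\mu)$ there would be a map $\cC(G/U)\to\Psi(S)$ whose image is proper and nonzero; by full faithfulness this is $\Psi(f)$ for some $f\colon k[G/U]\to S$ in $\cC$, which by simplicity of $S$ is zero or surjective, and exactness of $\Psi$ gives a contradiction. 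An exact functor taking simples to simples preserves finite length, so $\cC(G/U)=\Psi(k[G/U])$ has finite length in $\uRep(G;\mu)$ because $k[G/U]$ has finite length in the pre-Tannakian category $\cC$. This is precisely how the paper closes the circle, with no appeal to \cite[\S 13]{repst}; your subsequent deductions (essential image equals the finite length subcategory, and every smooth module is the union of its finite length submodules since the generators $\cC(G/U)$ have finite length) then go through as you describe.

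Two smaller points. First, the lemma is used as a tensor equivalence, and you assert without argument that $\Psi$ is a tensor functor; the paper spends a separate step constructing the natural map $\Psi(M)\uotimes\Psi(N)\to\Psi(M\otimes N)$ from a strongly bilinear pairing and checking it is an isomorphism first for $M,N$ of the form $k[G/U]$ and then in general via presentations (again using strong discreteness). Second, in your fullness sketch the claimed factorization of an arbitrary morphism $k[G/V]\to k[G/U]$ ``as the pushforward followed by an endomorphism of $k[G/V]$'' does not typecheck and is not needed; the cleaner route, as in the paper, is to first verify fullness on the objects $k[G/U]$ using $\Psi(k[G/U])\cong\cC(G/U)$ and then reduce the general case to these by choosing presentations by \'etale algebras.
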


\begin{proof}
The proof of involves verifying a large number of details. In the interest of space, we just provide an outline of the main ideas. We proceed as follows.

(a) For an object $M$ of $\cC$ and an open subgroup $U$ of $G$, we define the ``$U$-invariants'' $M^U$ of $M$ to be $\Hom_{\cC}(k[G/U], M)$. This is a finite dimensional $k$-vector space. Define $\Phi(M)$ to be the direct limit of the $M^U$ over open subgroups $U$. This is a (typically infinite dimensional) $k$-vector space. The functor $\Phi$ is obviously left-exact. Using the strong discreteness of $\cC$ (in the form of \'etale local splittings), one shows that $\Phi$ is exact.

(b) We define on $\Phi(M)$ the structure of a smooth $A$-module, where $A=A(G)$ is the completed group algebra. The basic idea is as follows. Suppose $x \in M^U$ and $a \in A$, and let $a_U \in \cC(G/U)$ be the $U$-component of $a$. (Recall that $A$ is the inverse limit of the $\cC(G/U)$'s.) Suppose that $a_U$ is left $V$-invariant. Then $a_U$ corresponds to a morphism $k[G/V] \to k[G/U]$ in $\cC$, and we define $ax$ to be the composition of this map with $x \colon k[G/U] \to M$. Thus $ax$ is an element of $M^V$.

(c) We now verify that for $M=k[G/U]$, the $A$-module $\Phi(M)$ is naturally isomorphic to $\cC(G/U)$, with its standard $A$-module structure. Using this, one shows that $\Phi$ is fully faithful; the idea is to choose presentations by \'etale algebras (which is possible by strong discreteness) to reduce to the $k[G/U]$ case.

(d) We next show that $\Phi$ maps simple object of $\cC$ to simple object of $\uRep(G; \mu)$. The idea is that if $S$ is a simple of $\cC$ and $\Phi(S)$ is not simple, then there is some map $\cC(G/U) \to \Phi(S)$ that detects a proper non-zero subobject of $S$, and this would yield such a map $k[G/U] \to S$ back in $\cC$, which is a contraction. From this, $\Phi$ preserves finite length objects. Since $\cC(G/U)$ is in the essential image of $\Phi$, it has finite length. As these objects are generators, it follows that every object of $\uRep(G; \mu)$ is the union of its finite length subobjects. It also follows that the essential image of $\Phi$ consists of all finite length objects, and so $\Phi$ induces an equivalence $\cC \to \Rep^{\rf}(G; \mu)$.

(e) Finally, we show that $\Phi$ naturally has the structure of a symmetric tensor functor. Let $M$ and $N$ be objects of $\cC$. Then there is a natural $k$-bilinear map
\begin{displaymath}
\Phi(M) \times \Phi(N) \to \Phi(M \otimes N).
\end{displaymath}
One verifies that this is strongly bilinear in the sense of \cite[\S 12.2]{repst}. It thus follows from the definition of the tensor product $\uotimes$ on $\uRep(G; \mu)$ that there is an induced map
\begin{displaymath}
\Phi(M) \uotimes \Phi(N) \to \Phi(M \otimes N).
\end{displaymath}
One then checks that this is an isomorphism when $M$ and $N$ have the form $k[G/U]$. The general case then follows by choosing presentations by such objects (which uses strong discreteness).
\end{proof}

\begin{remark} \label{rmk:sheaf}
There is an alternative approach to proving Theorem~\ref{thm:strong}(c). Since $\mu$ is a normal measure, the category $\uPerm(G; \mu)$ carries a natural linear Grothendieck topology \cite[\S 14.2]{repst}. It turns out that $\uRep(G; \mu)$ is exactly the category of sheaves for this topology \cite[Theorem~14.1]{repst}. It is not difficult to see that objects of $\cC$ also represent sheaves, and this yields a functor $\cC \to \uRep(G; \mu)$. One then verifies the necessary properties of this functor.
\end{remark}

\end{document}